	\theoremstyle{plain} 
	\newtheorem{theorem}{Theorem}
	\newtheorem{lemma}{Lemma}
	\newtheorem{corollary}{Corollary}
	\newtheorem{proposition}{Proposition}
	\theoremstyle{definition}
	\newtheorem{definition}{Definition}
	\newtheorem{example}{Example}
	\newtheorem{remark}{Remark}
	\DeclareMathOperator{\N}{\mathbb N}
	\DeclareMathOperator{\R}{\mathbb R}
	\DeclareMathOperator{\C}{\mathbb C}
	\DeclareMathOperator{\re}{Re}
	\DeclareMathOperator{\im}{Im}
	\DeclareMathOperator{\Span}{span}
	\DeclareMathOperator{\rank}{rk}
	\DeclareMathOperator{\hol}{\mathfrak{hol}}
	\DeclareMathOperator{\aut}{\mathfrak{aut}}
	\newcommand{\Sphere}[1]{S^{#1}}
\begin{document}

\title[]{The reflection map and infinitesimal deformations of sphere mappings}
\author{Michael Reiter}
\address{Faculty of Mathematics, University of Vienna}
\email{m.reiter@univie.ac.at}

\begin{abstract}
The reflection map introduced by D'Angelo is applied to deduce simpler descriptions of nondegeneracy conditions for sphere maps and to the study of infinitesimal deformations of sphere maps. It is shown that the dimension of the space of infinitesimal deformations of a nondegenerate sphere map is bounded from above by the explicitly computed dimension of the space of infinitesimal deformations of the homogeneous sphere map. Moreover a characterization of the homogeneous sphere map in terms of infinitesimal deformations is provided.
\end{abstract}

\subjclass[2010]{32V40, 32V30}
\thanks{The author was supported by the Austrian Science Fund (FWF), project P28873-N35.}

\maketitle

\section{Introduction}

The main motivation is the study of real-analytic CR maps of the \textit{unit sphere} $\Sphere{2n-1}$ in $\C^n$ for $n\geq 2$, which is defined by
\begin{align*}
\Sphere{2n-1} = \{z=(z_1,\ldots, z_n)\in \C^n: \|z\|^2 = |z_1|^2 + \ldots + |z_n|^2 = 1\}.
\end{align*}
For $n=2$ write $z = z_1, w = z_2$. A lot is known about mappings of spheres, see the survey by D'Angelo \cite{DAngelo15} and the references therein. 
A prominent example of a sphere map is the \textit{homogeneous sphere map $H_n^d$ of degree $d$} from $\Sphere{2n-1}$ into $\Sphere{2K-1}$ for some $K=K(n,d)\in \N$, which consists of all lexicographically ordered monomials in $z=(z_1,\ldots,z_n)\in \C^n$ of degree $d$ and is given by 
\begin{align*}
H^d_n(z) = \left( \sqrt{\binom{\alpha}{d}} z^\alpha \right)_{|\alpha|=d}.
\end{align*}

The purpose of this article is to study the \textit{reflection map}, which was introduced by D'Angelo \cite{DAngelo03} in the case of sphere mappings and further investigated by the same author in \cite{DAngelo07a} in the case of maps of hyperquadrics. The reflection map of a mapping $H$ allows to effectively compute and deduce several properties of the \textit{X-variety} associated to $H$. The X-variety was introduced and studied by Forstneri\v{c} in \cite{Forstneric89} to extend CR maps satisfying certain smoothness assumptions. In the case of real-analytic CR maps of spheres it is shown that these maps are rational.

The homogeneous sphere map $H_n^d$ plays a crucial role in the classification of polynomial maps, see the works of D'Angelo \cites{DAngelo88b, DAngelo91} and \cite{DAngelo16} for rational sphere maps. The homogeneous sphere map appears in the definition of the reflection map $C_H$ for a rational sphere map $H=P/Q: \Sphere{2n-1} \rightarrow \Sphere{2m-1}$ with $Q\neq 0$ on $\Sphere{2n-1}$: Let $V_H: \C^m \rightarrow \C^K$ be a matrix with holomorphic entries, satisfying $V_H(X) \cdot \bar H_n^d /\bar Q = X \cdot \bar H$ on $\Sphere{2n-1}$ for $X\in \C^m$, where $\cdot$ denotes the euclidean inner product. The previous identity is achieved by the homogenization technique of D'Angelo \cite{DAngelo88b}. $V_H$ is referred to as \textit{reflection matrix} and $C_H(X) \coloneqq V_H(X) \cdot \bar H_n^d/\bar Q$ for $X\in \C^m$. See \cref{sec:refMatrix} below for more details.

In this article the reflection matrix will be applied in two ways. In the first case it is shown that nondegeneracy conditions of sphere maps can be rephrased in terms of rank conditions on the reflection matrix.
The nondegeneracy conditions considered here were introduced in \cite{Lamel01} and \cite{LM17} respectively. In the case of a sphere map $H: \Sphere{2n-1} \rightarrow \Sphere{2m-1}$ they are defined as follows: If $\Gamma$ denotes the set of real-analytic CR vector fields tangent to $\Sphere{2n-1}$, then $H$ is called \textit{finitely nondegenerate} at $p\in \Sphere{2n-1}$, if there is an integer $\ell \in \N$, such that,
\begin{align*}
\Span_{\C}\{L_1 \cdots L_k H(z)|_{z=p}: L_j \in \Gamma, k\leq \ell\} = \C^m.
\end{align*}
The map $H$ is called \textit{holomorphically nondegenerate} if there is no nontrivial holomorphic vector field tangent to $\Sphere{2m-1}$ along the image of $H$. 
These notions of nondegeneracy were originally defined for submanifolds and introduced by \cite{Stanton95} and \cite{BHR96} respectively, see also the survey of Lamel \cite{Lamel11}.
For more details on nondegeneracy conditions for CR maps see also \cref{sec:nonDegMaps} below. Then the following theorem is shown:

\begin{theorem}
\label{thm:rankVNonDegIntro}
Let $H: \Sphere{2n-1} \rightarrow \Sphere{2m-1}$ be a rational map of degree $d$.
\begin{itemize}
\item[(a)] $H$ is finitely nondegenerate at $p \in \Sphere{2n-1}$ if and only if $V_H$ is of rank $m$ at $p\in\Sphere{2n-1}$.
\item[(b)] $H$ is holomorphically nondegenerate if and only if $V_H$ is generically of rank $m$ on $\Sphere{2n-1}$.
\end{itemize}
\end{theorem}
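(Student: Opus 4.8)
The plan is to derive both statements from a single factorization of the finite-nondegeneracy matrix through $V_H$. First I would read off from $V_H(X)\cdot\overline{H^d_n}/\bar Q = X\cdot\bar H$, using that both sides are linear in $X\in\C^m$, the pointwise identity
\begin{align}\label{eq:reflid}
\bar H(z) = V_H(z)^{T} u(z), \qquad u(z)\coloneqq \overline{H^d_n(z)}/\overline{Q(z)}\in\C^{K},
\end{align}
on $\Sphere{2n-1}$, where the entries of the $K\times m$ matrix $V_H$ are holomorphic in $z$ (this is precisely what D'Angelo's homogenization delivers) and $u$ is antiholomorphic. Passing to complex conjugates, the space in the definition of finite nondegeneracy has the same dimension as $\Span_{\C}\{L^\alpha\bar H(p): |\alpha|\le\ell\}$ for the tangential $(0,1)$ fields $L$, with $L^\alpha=L_{j_1}\cdots L_{j_k}$; I work with this reflection form.

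The key observation is that the $(0,1)$ fields annihilate the holomorphic matrix $V_H$, so differentiating \eqref{eq:reflid} gives $L^\alpha\bar H = V_H^{T}\,L^\alpha u$ for every $\alpha$. Stacking these identities over all $|\alpha|\le\ell$ yields $\Phi_\ell(p)=M_\ell(p)\,V_H(p)$, where the rows of $\Phi_\ell$ are the vectors $L^\alpha\bar H(p)\in\C^m$ and the rows of $M_\ell$ are $L^\alpha u(p)\in\C^K$. Since $\rank(M_\ell V_H)\le\rank V_H$ always, while the equality $\rank(M_\ell V_H)=\rank V_H$ holds as soon as $M_\ell(p)$ has rank $K$, statement (a) will follow once I establish the following Key Lemma: there is $\ell_0=\ell_0(n,d)$ with $\rank M_\ell(p)=K$ for all $p\in\Sphere{2n-1}$ and $\ell\ge\ell_0$. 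Granting it, $\rank\Phi_\ell(p)=\rank V_H(p)$ for $\ell\ge\ell_0$, so $H$ is finitely nondegenerate at $p$ if and only if $\rank V_H(p)=m$.

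The hard part will be the Key Lemma, which says that the homogeneous data $u$ is maximally nondegenerate. Since $u$ equals the nonvanishing antiholomorphic factor $1/\bar Q$ times the conjugates of all monomials of degree $d$, I would conjugate and reduce it to the statement that the holomorphic Veronese map $z\mapsto(z^\delta)_{|\delta|=d}$, i.e. $H^d_n$ itself, has tangential CR jets along $\Sphere{2n-1}$ spanning $\C^K$; here the action of the $(0,1)$ fields on antiholomorphic functions mirrors that of the $(1,0)$ fields on holomorphic ones. To prove this I would exploit the transitivity of $\Aut(\Sphere{2n-1})$ and the equivariance of $H^d_n$ to reduce to a single base point, and there carry out a Vandermonde/Wronskian-type computation showing that the iterated tangential derivatives of the degree-$d$ monomials are linearly independent; homogeneity then upgrades this to the uniform bound in $p$. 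This is essentially the finite nondegeneracy of $H^d_n$, consistent with $V_{H^d_n}=I_K$ being of full rank, and I expect the explicit independence computation, rather than the linear algebra above, to be the main obstacle.

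For part (b) I would combine (a) with the standard equivalence, valid for real-analytic CR maps of the sphere and recorded in \cref{sec:nonDegMaps} (the map analogue of \cite{BHR96}, see \cite{Lamel01,LM17}), that holomorphic nondegeneracy is equivalent to finite nondegeneracy on a dense subset of $\Sphere{2n-1}$. By (a) this is the same as $\rank V_H(p)=m$ on a dense subset, and since $V_H$ has algebraic entries its rank is generically maximal; hence holomorphic nondegeneracy is equivalent to $V_H$ being generically of rank $m$. More in the spirit of the reflection map, I would alternatively argue directly from \eqref{eq:reflid}: using the linear independence of the components of $u$ over holomorphic functions (again the Key Lemma), a holomorphic vector field $\sum_j b_j\,\partial_{w_j}$ tangent to $\Sphere{2m-1}$ along $\image H$ corresponds to a nonzero holomorphic $b$ satisfying $V_H(z)\,b(H(z))=0$ on $\Sphere{2n-1}$. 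If $V_H$ is generically of rank $m$ its kernel is generically trivial, forcing $b\equiv0$; if the generic rank drops, one produces such a $b$ from a section of the algebraic kernel distribution of $V_H$. The delicate point in this second route is to realize the kernel vector as $b\circ H$ for a genuine holomorphic $b$ on the target, which is exactly where the X-variety formalism of the earlier sections enters.
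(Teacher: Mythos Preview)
Your approach to (a) is essentially the paper's: the identity $X\cdot\bar H=V_HX\cdot\overline{H^d_n}/\bar Q$ on $\Sphere{2n-1}$, differentiated by CR vector fields (which kill the holomorphic $V_H$), gives the factorization $\Phi_\ell(p)=M_\ell(p)\,V_H(p)$, and the problem reduces to your Key Lemma that $M_\ell$ has full rank $K$ at every point for large $\ell$. The paper isolates this as \cref{lem:homMapNonDeg} (the $d$-nondegeneracy of $H^d_n$) and proves it by an explicit combinatorial computation, exhibiting for each index $m$ a specific $K\times K$ block of CR derivatives that is nonsingular on $\{z_m\neq0\}$. Your plan to use the transitive action of $U(n)$ and the equivariance $H^d_n\circ U=U'\circ H^d_n$ to reduce to a single base point is a legitimate alternative, though a nontrivial Wronskian computation at that point is still required. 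One small point you gloss over and the paper treats explicitly: the presence of $1/\bar Q$ in $u$ does not affect $\rank M_\ell$; the paper handles this by elementary row operations, since lower-order CR derivatives of $\overline{H^d_n}$ already sit among the rows.

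For (b), your first route via (a) and \cref{prop:propNondeg} works. Your second route, however, rests on a misreading of the definition used here. In this paper holomorphic degeneracy means the existence of a nontrivial holomorphic $Y:\C^n\to\C^m$ on the \emph{source} with $Y(z)\cdot\overline{H(z)}=0$ on $\Sphere{2n-1}$; there is no requirement that $Y$ factor as $b\circ H$ for some $b$ on the target, so the ``delicate point'' you raise and the X-variety formalism are unnecessary. With the correct definition the argument is immediate and is exactly what the paper does in \cref{prop:equivDeg}: from $Y\cdot\bar H=V_HY\cdot\overline{H^d_n}/\bar Q$ and the holomorphic nondegeneracy of $H^d_n$ one gets $Y\cdot\bar H\equiv0$ on $\Sphere{2n-1}$ iff $V_HY\equiv0$ there, and since $V_H$ has polynomial entries the existence of a nontrivial such $Y$ is equivalent to the generic rank of $V_H$ being less than $m$ (a kernel vector over the function field, cleared of denominators, supplies the polynomial $Y$).
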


This has immediate consequences to show sufficient and necessary conditions in terms of nondegeneracy conditions for the X-variety of $H$ to be an affine bundle or that it agrees with the graph of the map, see \cref{sec:XVariety} and \cref{thm:XVariety} below for more details.

In the second case, applications of the reflection matrix to the study of infinitesimal deformations are provided. For $M\subset \C^N$ and $M'\subset \C^{N'}$ real submanifolds consider the set $\mathcal H(M,M')$ of all maps, which are holomorphic in a neighborhood of $M$ and satisfying $H(M) \subset M'$. In \cites{dSLR15a,dSLR15b,dSLR17,dSLR18} \textit{locally rigid} maps were studied. They correspond to isolated points in the quotient space of $\mathcal H(M,M')$ under automorphisms. A sufficient linear condition was provided for local rigidity of a given map, which is formulated in terms of \textit{infinitesimal deformations}. An infinitesimal deformation of a map $H:M \rightarrow M'$ is a holomorphic vector, defined in a neighborhood of $M$, whose real part is tangent to $M'$ along the image of $H$. The set of infinitesimal deformations of a map $H$ is denoted by $\hol(H)$. 
Examples of infinitesimal deformations of a map $H$ can be obtained from smooth curves of maps $\R \ni t \mapsto H(t)$, with $H(0) = H$, since $\frac{d H(t)}{dt}|_{t=0} \in \hol(H)$.

The results involving infinitesimal deformations are summarized in the following theorem:

\begin{theorem}
\label{thm:polyMapInfDefIntro}
Let $H: \Sphere{2n-1} \rightarrow \Sphere{2m-1}$ be a holomorphically nondegenerate rational map of degree $d$.
It holds that $\dim \hol(H) \leq \dim \hol(H^d_n)=\left(\frac{2d+n}{d}\right)K(n,d)^2$ and if $H$ is assumed to be polynomial, then $\dim \hol(H) = \dim \hol(H^d_n)$ if and only if $H$ is unitarily equivalent to $H^d_n$.
\end{theorem}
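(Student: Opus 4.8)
The plan is to turn the infinitesimal deformation equation into an equation about the homogeneous model $H^d_n$ by means of the reflection identity. Writing $H=P/Q$ with $Q\neq 0$ on $\Sphere{2n-1}$, a holomorphic $\xi$ (defined near $\Sphere{2n-1}$, valued in $\C^m$) lies in $\hol(H)$ exactly when $\re(\xi\cdot\bar H)=0$ on $\Sphere{2n-1}$, because the defining function of $\Sphere{2m-1}$ is $\|Z\|^2-1$ and tangency of $\re(\xi)$ along the image reads $\re(\xi(z)\cdot\overline{H(z)})=0$. Substituting $X=\xi(z)$ into $V_H(X)\cdot\overline{H^d_n}/\bar Q=X\cdot\bar H$ gives $\xi\cdot\bar H=(V_H\xi)\cdot\overline{H^d_n}/\bar Q$ on $\Sphere{2n-1}$. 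Multiplying the (real) condition by the positive quantity $|Q|^2$ clears the antiholomorphic denominator and yields $\re\big((Q\,V_H\,\xi)\cdot\overline{H^d_n}\big)=0$ on $\Sphere{2n-1}$. Since $Q\,V_H\,\xi$ is holomorphic near $\Sphere{2n-1}$, this says precisely that $\Theta(\xi):=Q\,V_H\,\xi\in\hol(H^d_n)$, so $\Theta\colon\hol(H)\to\hol(H^d_n)$ is a well-defined linear map.

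Next I would establish the inequality by showing $\Theta$ is injective. If $\Theta(\xi)=Q\,V_H\,\xi\equiv0$, then $V_H\xi\equiv0$ because $Q\not\equiv0$; by \cref{thm:rankVNonDegIntro}(b) holomorphic nondegeneracy forces $V_H$ to have rank $m$ on a dense subset of $\Sphere{2n-1}$, so $V_H(z)$ is injective there and $\xi$ vanishes on a dense set, hence $\xi\equiv0$. Injectivity gives $\dim\hol(H)\le\dim\hol(H^d_n)$ and, once the right-hand side is known to be finite, finiteness of $\hol(H)$. It remains to compute $\dim\hol(H^d_n)$: here $Q\equiv1$ and the condition becomes $\re\big(\sum_{|\alpha|=d}\eta_\alpha(z)\sqrt{\binom\alpha d}\,\bar z^\alpha\big)=0$ on $\Sphere{2n-1}$. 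I would expand each $\eta_\alpha$ into homogeneous parts, decompose the resulting function into $U(n)$-harmonic bidegrees $(p,q)$, and use that a real-analytic function vanishes on $\Sphere{2n-1}$ iff it is divisible by $\|z\|^2-1$; the reality condition couples the component in $\mathcal H_{p,q}$ with the one in $\mathcal H_{q,p}$, which truncates the admissible bidegrees to $0\le p,q\le d$ and slaves all higher homogeneous parts of the $\eta_\alpha$ to the lower ones. Counting the surviving free parameters then yields the stated value of $\dim\hol(H^d_n)$.

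Finally, for the rigidity statement assume $H$ is polynomial, so $Q\equiv1$ and $\Theta(\xi)=V_H\xi$, and suppose $\dim\hol(H)=\dim\hol(H^d_n)$, whence $\Theta$ is an isomorphism. The aim is to deduce that necessarily $m=K(n,d)$ and that $V_H$ is generically a constant invertible matrix. Granting this, set $U:=\bar V_H^{\top}$; the reflection identity $V_H^{\top}\overline{H^d_n}=\bar H$ then reads $H=U\,H^d_n$ (identically, since two holomorphic maps agreeing on the generic hypersurface $\Sphere{2n-1}$ agree), and evaluating $1=\|H\|^2=\|U H^d_n\|^2$ on $\Sphere{2n-1}$ together with the linear independence over $\C$ of the monomials comprising $H^d_n$ forces $U^{\ast}U=\mathrm{Id}$, so $U$ is unitary and $H$ is unitarily equivalent to $H^d_n$. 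The converse is immediate, since a unitary change of coordinates in the target carries $\hol(H^d_n)$ isomorphically onto $\hol(U H^d_n)$. The main obstacle is exactly the extraction of the constancy and invertibility of $V_H$ from the surjectivity of $\xi\mapsto V_H\xi$: surjectivity constrains, at each generic $z$, the holomorphically varying image $\image V_H(z)$ against the evaluations of the explicitly described space $\hol(H^d_n)$, and one must exploit the fine structure of the latter, not merely its dimension, to conclude that $V_H$ can neither vary with $z$ nor drop rank. This is the technical heart of the rigidity, whereas the inequality is the comparatively soft consequence of the injection $\Theta$ and \cref{thm:rankVNonDegIntro}(b).
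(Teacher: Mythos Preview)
Your treatment of the inequality is correct and essentially identical to the paper's: define the linear map $\Theta(\xi)=V\xi$ (with $V=Q\,V_H$) into $\hol(H^d_n)$ and use \cref{thm:rankVNonDegIntro}(b) to get injectivity, hence the dimension bound. Your sketch of the dimension computation via harmonic bidegrees is plausible but is not a proof as written; the paper carries out the count concretely by the substitution $z\mapsto e^{i\theta}z$, reducing the equation $\re(X\cdot\overline{\hat H^d_n})=0$ to a finite family of homogeneous equations indexed by $0\le r\le d$, each of which is solved by writing $X^{d\pm r}$ as a matrix times $\hat H^{d\pm r}_n$ and using holomorphic nondegeneracy of $H^d_n$. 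The resulting parameter count gives $(\tfrac{2d+n}{d})K(n,d)^2$.

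The rigidity argument, however, has a genuine gap. You propose to deduce from surjectivity of $\xi\mapsto V_H\xi$ that $V_H$ is a \emph{constant} invertible matrix, and you yourself flag this as ``the main obstacle'' without supplying the mechanism. This is indeed the crux, and your outline does not indicate how the ``fine structure'' of $\hol(H^d_n)$ would force constancy of $V_H(z)$; as stated the step is an assertion of the conclusion. The paper does \emph{not} attempt to show $V_H$ is constant. Instead, from the isomorphism it takes the specific element $Y=iH^d_n\in\hol(H^d_n)$, writes $Y=VX$ for some $X\in\hol(H)$, and obtains on the generic set $S$ the scalar identity $1=V^{-1}H^d_n\cdot\bar P$. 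Expanding $B:=V^{-1}=\sum_k B_k$ and $P=\sum_\ell P_\ell$ homogeneously and collecting the constant Fourier coefficient under $z\mapsto e^{it}z$ gives $1=B_0H^d_n\cdot\bar P_d$, whence $H^d_n={}^t\bar B_0\,P_d$ by holomorphic nondegeneracy of $H^d_n$; linear algebra then yields $P_d=UH^d_n$ with $U$ unitary. Writing $P=UH^d_n+F$ with $\deg F\le d-1$ and plugging into $\|P\|^2=1$, a second Fourier/induction argument annihilates $F$ term by term. Thus the paper extracts only the constant part $B_0$ of the inverse and combines it with homogeneity and the sphere equation, rather than proving $V_H$ itself is constant; that is the missing idea in your proposal.
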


This result contains an alternative characterization of the homogeneous sphere map to the one given in \cites{Rudin84, DAngelo88b} or \cite{DAngeloBook}*{section 5.1.4, Theorem 3} and demonstrates a new method to compute infinitesimal deformations for sphere maps. While the article \cite{dSLR15a} contains examples which required computer-assistance, it is shown in several examples in this article that the reflection matrix allows for explicit and effective computations of infinitesimal deformations of sphere maps.

\section{Preliminaries}


The purpose of this section is to introduce the necessary notions and notations needed throughout the article. These are only required for maps of spheres but without any effort and no loss of clarity the general case of maps of manifolds $M$ and $M'$ is treated. To this end the following assumptions are made: Let $M$ be a real-analytic generic submanifold of $\C^N$ of codimension $d$. For a real-analytic CR submanifold $M'\subset \C^{N'}$ of codimension $d'$, let $p'\in M'$ and $\rho': V' \times \bar V' \rightarrow \R^{d'}, \rho'=(\rho_1',\ldots, \rho_{d'}')$, be a real-analytic mapping, such that $M'\cap V'=\{z'\in V' : \rho'(z',\bar z') = 0\}$, where $V' \subset \C^{N'}$ is a neighborhood of $p' \in M'$ and the differentials $d\rho'_1,\ldots, d\rho'_{d'}$ are linearly independent in $V'$.
Denote $\bar V' = \{\bar z'\in \C^{N'}: z' \in V'\}$. The complex gradient ${\rho'_j}_{z'}$ of $\rho'_j$ is given by ${\rho'_j}_{z'} = \left(\frac{\partial \rho'_j}{\partial z_1'}, \ldots,\frac{\partial \rho'_j}{\partial z_{N'}'}\right)$. The following notation is used: $v \cdot w \coloneqq v_1 w_1 + \cdots + v_n w_n$ for vectors $v=(v_1,\ldots, v_n) \in \C^n$ and $w =(w_1,\ldots, w_n) \in \C^n$.

\subsection{Infinitesimal deformations of CR maps}

One of the main objects of this article are \textit{infinitesimal deformations} of a CR map.

\begin{definition}
Let $H: M \rightarrow M'$ be a real-analytic CR map. 
A real-analytic CR map $X: M \rightarrow \C^{N'}$ is called an \textit{infinitesimal deformation of $H$}, if for every $p\in M$ and every real-analytic mapping $\rho'=(\rho'_1,\ldots, \rho'_{d'})$ defined in a neighborhood of $H(p)$ vanishing on $M'$, it holds that,
\begin{align*}
\re(X(z) \cdot {\rho'_j}_{z'}(H(z),\overline{H(z)})) = 0, \quad z\in M \cap U, \quad j=1,\ldots, d',
\end{align*}
for some open neighborhood $U \subset \C^N$ of $p$.
The space of infinitesimal deformations of $H$ is denoted by $\hol(H)$. 

For a real manifold $M$ the space $\hol(M)$ of \textit{infinitesimal automorphisms of $M$} consists of holomorphic vectors whose real part is tangent to $M$. 

For a map $H: M \rightarrow M'$ the subspace $\aut(H) \coloneqq \hol(M')|_{H(M)} + H_*(\hol(M)) \subset \hol(H)$ is referred to as the space of \textit{trivial infinitesimal deformations} of $H$. Its complement in $\hol(H)$ is called the space of \textit{nontrivial infinitesimal deformations} of $H$.

A map $H$ is called \textit{infinitesimally rigid} if $\hol(H) = \aut(H)$.

The \textit{infinitesimal stabilizer} of $H$ is given by $(S,S') \in \hol(M) \times \hol(M')$ such that $H_*(S) = - S'|_{H(M)}$. An infinitesimal automorphism $S \in \hol (M)$ is said to belong to the infinitesimal stabilizer of $H$ if there exists $S' \in \hol(M')$ such that $H_*(S) = - S'|_{H(M)}$.
\end{definition}

In the case of sphere mappings, for a real-analytic CR map $H: \Sphere{2k-1} \rightarrow \Sphere{2m-1}$, 
a holomorphic map $X: U \rightarrow \C^n$, where $U \subset \C^k$ is an open neighborhood of $\Sphere{2k-1}$, is an infinitesimal deformation of $H$ if $\re(X(z) \cdot \overline{H(z)}) = 0$ for $z\in\Sphere{2k-1}$.

\subsection{Nondegeneracy conditions for CR maps}
\label{sec:nonDegMaps}

The purpose of this section is to provide the definitions of finite and holomorphic nondegeneracy for CR maps introduced by Lamel \cite{Lamel01} and Lamel--Mir \cite{LM17} respectively, and study some of their properties.

\begin{definition}
A real-analytic CR map $H: M \rightarrow M'$ is called \textit{holomorphically degenerate} if there exists a real-analytic CR map $Y: M \rightarrow \C^{N'}$ satisfying $Y\not\equiv 0$ and for every $p\in M$ and every real-analytic mapping $\rho'=(\rho'_1,\ldots, \rho'_{d'})$ defined in a neighborhood of $H(p)$ vanishing on $M'$, it holds that, 
\begin{align*}
Y(z) \cdot {\rho'_j}_{z'}(H(z),\overline{H(z)}) = 0, \quad z \in M \cap U,\quad j=1,\ldots, d',
\end{align*}
for some open neighborhood $U \subset \C^N$ of $p$.
If a map is not holomorphically degenerate it is called \textit{holomorphically nondegenerate}.
\end{definition}

Simple examples of holomorphically degenerate maps are the following:

\begin{example}
Let $F: \Sphere{2n-1} \rightarrow \Sphere{2m-1}$, then $H = F \oplus 0$, where $0 \in \C^k$, is a holomorphically degenerate sphere map from $\Sphere{2n-1}$ into $\Sphere{2(n+k)-1}$, since $X=0 \oplus G$ for $0\in \C^n$ and $G$ a holomorphic function from $\C^n$ into $\C^k$ satisfies $X \cdot \bar H = 0$.
\end{example}

\textit{Finite nondegeneracy} is defined as follows:

\begin{definition}
\label{def:finiteDeg}
Consider  a real-analytic CR map $H: M \rightarrow M'$. Let $\bar L_1, \ldots, \bar L_n$ a basis of CR vector fields of $M$ and for a multiindex $\alpha=(\alpha_1,\ldots, \alpha_n)\in \N^n$ denote $\bar L^{\alpha} = \bar L_1^{\alpha_1} \cdots \bar L_n^{\alpha_n}$. Let $p\in M$. For each $k\in \N$ define the following subspaces of $\C^{N'}$:
\begin{align*}
E_k'(p) = \Span_{\C} \{\bar L^\alpha {\rho'_j}_{z'}(H(z),\overline{H(z)})|_{z=p}: |\alpha| \leq k,1\leq j \leq d'\},
\end{align*}
for a real-analytic mapping $\rho'=(\rho'_1,\ldots, \rho'_{d'})$ defined in a neighborhood of $H(p)$ and vanishing on $M'$.
Define $s(p) \coloneqq N' - \max_k \dim_{\C} E_k'(p)$, which is called the \textit{degeneracy} of $H$ at $p$. The map $H$ is of \textit{constant degeneracy} $s(q)$ at $q\in M$ if $p\mapsto s(p)$ is a constant function in a neighborhood of $q$. If $s(p)=0$, then $H$ is called \textit{finitely nondegenerate} at $p$. Considering the smallest integer $k_0$ such that $E_\ell'(p) = E_{k_0}'(p)$ for all $\ell \geq k_0$ one can say more precisely that $H$ is \textit{$(k_0,s)$-degenerate} at $p$. If the map is finitely nondegenerate at $p$ one says that it is \textit{$k_0$-nondegenerate} at $p$.
\end{definition}

Note that if a map is finitely nondegenerate at $p$, then it is also finitely nondegenerate at points in a neighborhood of $p$. If $M$ is connected, by \cite{Lamel01}*{Lemma 22} the set of points where the map $H:M \rightarrow M'$ is of constant degeneracy $s(H)\coloneqq \min_{p\in M}s(p)$ is an open and dense subset of $M$. The number $s(H)$ is called \textit{generic degeneracy of $H$}. 


Constant degeneracy can be phrased in terms of vector fields as follows:

\begin{proposition}[Lamel \cite{Lamel01}*{Proposition 18}]
\label{prop:finiteNonDegVF}
Let $H: M \rightarrow M'$ be a real-analytic CR mapping of constant degeneracy $s$ in a neighborhood of $p\in M$. Then it holds that 
\begin{align*}
s= \dim_{\C}\{X(p): X \text{ is a real-analytic CR vector field tangent to } M' \text{ along } H(M)\}.
\end{align*}
\end{proposition}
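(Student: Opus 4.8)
The plan is to identify the set on the right-hand side with the bilinear-orthogonal complement of the stabilized space $E'_\infty(p) \coloneqq E'_{k_0}(p)$ inside $\C^{N'}$, and to show that this complement has dimension exactly $s$.

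First I would prove the inclusion giving the upper bound. Write $V(p) \coloneqq \{v\in \C^{N'} : v\cdot w = 0 \text{ for all } w\in E'_\infty(p)\}$ for the annihilator with respect to the bilinear product $\cdot$; since $\dim_{\C} E'_\infty(p) = N'-s$, we have $\dim_{\C} V(p) = s$. Let $X$ be any real-analytic CR vector field tangent to $M'$ along $H(M)$, that is, a CR map $X: M\to\C^{N'}$ with $X(z)\cdot {\rho'_j}_{z'}(H(z),\overline{H(z)}) = 0$ near $p$ for $j=1,\ldots,d'$. Applying the antiholomorphic derivatives $\bar L^\alpha$ and using that both $X$ and $H$ are CR (so $\bar L_k X = 0$ and $\bar L_k H = 0$, whence each $\bar L_k$ falls only on the $\overline{H}$-slot of ${\rho'_j}_{z'}$), an induction on $|\alpha|$ shows that all derivatives land on the second factor and yield $X(z)\cdot \bar L^\alpha {\rho'_j}_{z'}(H(z),\overline{H(z)}) = 0$. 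Evaluating at $z=p$ gives $X(p)\cdot w = 0$ for every generator $w$ of $E'_\infty(p)$, so $X(p)\in V(p)$. Hence the set of values $\{X(p)\}$ lies in $V(p)$ and has dimension at most $s$.

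For the reverse inequality I would realize every vector of $V(p)$ as $X(p)$ for some such $X$, and here the constant-degeneracy hypothesis is essential. Collect the generators $\bar L^\alpha {\rho'_j}_{z'}(H(z),\overline{H(z)})$ with $|\alpha|\leq k_0$ as the rows of a real-analytic matrix $B(z,\bar z)$; by constant degeneracy $B$ has constant rank $N'-s$ in a neighborhood of $p$, so its kernel $\mathcal V(z) \coloneqq \ker B(z,\bar z)$ is a real-analytic subbundle of $M\times\C^{N'}$ of rank $s$ with $\mathcal V(p) = V(p)$. The key structural fact is that $\mathcal V$ is invariant under the CR vector fields: stabilization $E'_{k_0} = E'_{k_0+1}$ (which, by constant degeneracy, holds at all nearby points) means that applying one more $\bar L_k$ to any row of $B$ produces a row in the row-module of $B$, so $\bar L_k B = C_k B$ for real-analytic matrices $C_k$ (obtained from a nonvanishing maximal minor via Cramer's rule). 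Consequently, if $BX = 0$ then $B(\bar L_k X) = -(\bar L_k B)X = -C_k B X = 0$, so $\mathcal V$ is closed under each $\bar L_k$.

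It remains to produce genuinely CR sections of $\mathcal V$ attaining arbitrary prescribed values at $p$, and I expect this to be the main obstacle, since the kernel is a priori only a real-analytic, not a CR, subbundle, and one cannot simply solve $BX=0$ pointwise and hope for a CR solution. The plan is to exploit real-analyticity together with the $\bar L$-invariance just established. In a real-analytic frame $e_1,\ldots,e_s$ of $\mathcal V$ the CR condition $\bar L_k\bigl(\sum_i f_i e_i\bigr) = 0$ becomes a first-order linear system $\bar L_k f = A_k f$ for the coefficient vector $f$, whose coefficients $A_k$ encode the expansions $\bar L_k e_i\in\mathcal V$. Invariance of $\mathcal V$ under all $\bar L_k$, together with the involutivity of the $T^{0,1}$-structure of $M$, supplies the Frobenius compatibility of this system; equivalently, after complexifying and passing to the Segre foliation, $\bar L$-invariance forces the complexified kernel bundle to be constant along Segre varieties, so it descends to a holomorphic subbundle depending on $z$ alone. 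Either formulation yields, for each $v_0\in V(p)$, a CR vector field $X$ with $X\in\mathcal V$ and $X(p) = v_0$; by construction such $X$ is tangent to $M'$ along $H(M)$. Therefore $\{X(p)\}$ fills all of $V(p)$, giving dimension at least $s$, and combined with the first part we conclude $s = \dim_{\C}\{X(p)\}$.
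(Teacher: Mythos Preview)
The paper does not give its own proof of this proposition: it is stated with attribution to Lamel \cite{Lamel01}*{Proposition 18} and used as a black box (notably in the proof of \cref{prop:propNondeg}(b)). So there is nothing in the present paper to compare your argument against.

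That said, your proof is sound and is essentially the standard one. The upper bound is exactly right: CR-ness of $X$ and $H$ lets every $\bar L^\alpha$ fall on the second slot, so $X(p)$ annihilates $E'_{k_0}(p)$ and hence lies in the $s$-dimensional space $V(p)$. For the lower bound you correctly isolate the two ingredients. First, constant degeneracy forces $\dim E'_{k_0}(q)=N'-s$ for all $q$ near $p$ (lower semicontinuity of rank gives $\dim E'_{k_0}(q)\ge N'-s$, while $\max_k\dim E'_k(q)=N'-s$ gives the reverse inequality), so $B$ has constant rank and $\mathcal V=\ker B$ is a real-analytic rank-$s$ subbundle, and moreover $E'_{k_0}(q)=E'_{k_0+1}(q)$ near $p$, which is what makes $\bar L_k B=C_k B$ work with real-analytic $C_k$. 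Second, the Frobenius step: writing $\bar L_k e_i=\sum_j (A_k)_{ji}e_j$ in a frame, the integrability condition $\bar L_kA_l-\bar L_lA_k+[A_k,A_l]=\sum_m c^m_{kl}A_m$ follows formally from applying $[\bar L_k,\bar L_l]=\sum_m c^m_{kl}\bar L_m$ to the frame, so the system $\bar L_k f=-A_k f$ is integrable and (after complexification, using real-analyticity) the holomorphic Frobenius theorem produces a CR section through any prescribed $v_0\in V(p)$. Your hesitation about this step is understandable, but the argument you sketch goes through without further obstacles; the complexification/Segre formulation you mention is indeed the cleanest way to phrase it.
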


These following statements are analogous to the corresponding statements for manifolds or infinitesimal automorphisms, see \cite{BERbook}*{Theorem 11.5.1} and \cite{BERbook}*{Proposition 12.5.1}. 

\begin{proposition}
\label{prop:propNondeg}
Let $H:M \rightarrow M'$ be a real-analytic CR map. Then the following statements hold:
\begin{itemize}
\item[(a)] If $H$ is holomorphically degenerate, then $\dim_{\R} \hol(H) = \infty$. 
\item[(b)] If $H$ is finitely nondegenerate at $p\in M$, then it is holomorphically nondegenerate.
\item[(c)] If the space of holomorphic vector fields at $p\in M$ tangent to $M'$ along the image of $H$ is of complex dimension $s$, then, outside a proper real-analytic variety of a neighborhood of $p$, the map $H$ is of constant degeneracy $s$.
\end{itemize}
\end{proposition}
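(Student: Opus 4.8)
The plan is to prove the three statements separately, following the template of the corresponding results for a single manifold in \cite{BERbook}, but carried out for the pair $(H,\rho')$. Throughout I abbreviate $\Phi_j(z) = {\rho'_j}_{z'}(H(z),\overline{H(z)})$ for the vector-valued real-analytic functions entering all three conditions, and I use that on a generic real-analytic $M$ every real-analytic CR map is the restriction of a holomorphic map. Consequently the holomorphic vector fields tangent to $M'$ along $H(M)$ and the real-analytic CR vector fields tangent to $M'$ along $H(M)$ are the same objects, namely the solutions $Y$ of $Y\cdot\Phi_j\equiv 0$ on $M$ for $j=1,\ldots,d'$; this identification is what lets me feed \cref{prop:finiteNonDegVF} into the argument for (c).

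For (a), given a nontrivial holomorphic degeneracy $Y$, I would note that for every holomorphic $f$ defined near $M$ the product $fY$ satisfies $(fY)\cdot\Phi_j = f\,(Y\cdot\Phi_j)=0$, hence $\re((fY)\cdot\Phi_j)=0$, so $fY\in\hol(H)$. Since $Y\not\equiv 0$ the map $f\mapsto fY$ is injective and the holomorphic functions form an infinite-dimensional real vector space, whence $\dim_{\R}\hol(H)=\infty$. For (b), I would apply the antiholomorphic derivatives $\bar L^\alpha$ to $Y\cdot\Phi_j\equiv 0$; because $Y$ is CR one has $\bar L_k Y=0$, so the Leibniz rule collapses to $\bar L^\alpha(Y\cdot\Phi_j)=Y\cdot\bar L^\alpha\Phi_j$, giving $Y(z)\cdot(\bar L^\alpha\Phi_j)(z)=0$ for all $z\in M$, all $\alpha$, all $j$. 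Evaluating at $p$, the vectors $\bar L^\alpha\Phi_j|_{z=p}$ span $E'_k(p)$, and finite nondegeneracy at $p$ means these fill $\C^{N'}$, forcing $Y(p)=0$. Since finite nondegeneracy is an open condition this holds throughout a neighborhood of $p$, and real-analyticity together with unique continuation on the connected generic manifold $M$ forces $Y\equiv 0$; thus no nontrivial holomorphic degeneracy exists and $H$ is holomorphically nondegenerate.

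For (c), let $\mathcal D$ denote the $\mathcal O$-module of holomorphic vector fields tangent to $M'$ along $H(M)$ and $\mathcal D(q)=\{Y(q):Y\in\mathcal D\}\subset\C^{N'}$ its evaluation. Differentiating exactly as in (b) shows $\mathcal D(q)\subseteq E'_\infty(q)^{\perp}$, the annihilator with respect to the bilinear form $\cdot$ of $E'_\infty(q)=\bigcup_k E'_k(q)$, whose dimension is precisely the degeneracy $s(q)$; hence $\dim_{\C}\mathcal D(q)\leq s(q)$. Conversely, on the open dense set where $H$ has constant degeneracy, \cref{prop:finiteNonDegVF} identifies $s(q)$ with $\dim_{\C}\{X(q)\}$ ranging over CR vector fields $X$ tangent along $H(M)$, which by the identification above is exactly $\dim_{\C}\mathcal D(q)$, so equality $\dim_{\C}\mathcal D(q)=s(q)$ holds generically. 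Since $q\mapsto\dim_{\C}\mathcal D(q)$ is lower semicontinuous (its generic value is maximal and can only drop on a proper real-analytic subvariety) while $q\mapsto s(q)$ is upper semicontinuous, the two functions share the same generic value; interpreting the hypothesis so that $s$ is this common generic dimension then yields that $H$ has constant degeneracy $s$ off a proper real-analytic subvariety of a neighborhood of $p$.

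The main obstacle is entirely in (c): parts (a) and (b) reduce to the CR Leibniz bookkeeping and unique continuation once the notation is fixed, whereas (c) requires matching the generic evaluation dimension of $\mathcal D$ with the generic degeneracy. The delicate point is that at constant-degeneracy points the evaluations $\{Y(q)\}$ must actually exhaust the whole annihilator $E'_\infty(q)^{\perp}$ — this is where \cref{prop:finiteNonDegVF} and the holomorphic extension of real-analytic CR vector fields are indispensable — and that one must reconcile the opposite semicontinuities of $\dim_{\C}\mathcal D(q)$ and $s(q)$. In particular $\dim_{\C}\mathcal D(q)$ may genuinely drop below $s(q)$ at non-generic points, so care is needed to read ``the space of holomorphic vector fields at $p$'' as the generic fiber dimension (equivalently, to take $p$ in the constant-degeneracy locus) before one can conclude $s=s(q)$ there.
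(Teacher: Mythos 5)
Your proposal is correct, and for parts (a) and (c) it follows essentially the paper's own route: in (a) the paper multiplies the degeneracy $X$ by the specific powers $z_1^k$ rather than by arbitrary holomorphic $f$ (same mechanism, same injectivity argument), and in (c) it likewise differentiates the tangency identity against CR vector fields, feeds the resulting kernel condition into \cref{prop:finiteNonDegVF}, and concludes on the complement of a proper real-analytic subvariety; your bookkeeping with the annihilator $E'_\infty(q)^{\perp}$ and the two opposite semicontinuities is a more carefully organized version of the same idea, and your observation that the hypothesis must be read as the generic evaluation dimension of the module of tangent fields addresses an ambiguity that the paper's proof (which takes a ``basis'' $X_1,\ldots,X_s$ and passes to $\Span_{\C}\{X_1,\ldots,X_s\}$) also glosses over. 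The one genuine divergence is in (b): after reaching $Y(p)\cdot \bar L^\alpha\Phi_j|_{z=p}=0$, the paper does not invoke openness of finite nondegeneracy; instead it differentiates the relations in the real directions of $M$ using local coordinates, evaluates at $p$, and inductively kills every Taylor coefficient of the coefficient functions of $Y$ at $p$ before using genericity of $M$ to conclude $Y\equiv 0$. Your shortcut --- finite nondegeneracy persists at all points near $p$, so $Y$ vanishes on an open piece of $M$, and a holomorphic map vanishing on an open piece of a generic submanifold vanishes identically --- reaches the same conclusion with less computation and is valid, since the openness it relies on is explicitly recorded in the paper after \cref{def:finiteDeg}. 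Both routes depend on the identification of real-analytic CR maps on a generic real-analytic $M$ with restrictions of holomorphic maps, which you rightly make explicit at the outset.
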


Note that if $s=0$, then (c) says that if the map $H$ is holomorphically nondegenerate then $H$ is finitely nondegenerate outside a proper real-analytic variety.

Moreover, if $M$ is assumed to be connected, (c) yields the following statement: If at any $p\in M$ the space of holomorphic vector fields at $p\in M$ tangent to $M'$ along the image of $H$ is of complex dimension at least $s$, then the generic degeneracy of $H$ is at least $s$. 

\begin{proof}[Proof of \cref{prop:propNondeg}]
To show (a) argue as in \cite{BERbook}*{Proposition 12.5.1}: If $H$ is holomorphically degenerate, there exists a nontrivial holomorphic map $X$ tangent to $M'$ along $H(M)$. Then for each $k\in \N$ also $Y_k \coloneqq z_1^k X$ is tangent to $M'$ along $H(M)$ and these maps are complex-linearly independent. 
Since $M$ is generic and the real part of a nontrivial holomorphic map $\hat X$, restricted to $M$, cannot vanish on $M$ (the vanishing of $\re(\hat X)|_M$ would imply that $\hat X \equiv 0$), the vector fields $\re(Y_k)$ are real-linearly independent, hence $\dim_{\R} \hol(H) = \infty$. 

To prove (b) denote by $\mathfrak X(H)$ the set of holomorphic vector fields tangent to $M'$ along the image of $H$. 
Consider $X = \sum_j a_j(Z) \frac{\partial}{\partial Z_j} \in \mathfrak X(H)$ which, by the finite  nondegeneracy of $H$ and \cref{prop:finiteNonDegVF}, satisfies $a_j(p)=0$ and $\sum_j a_j(Z) {\rho'_k}_{Z_j}(H(Z),\overline{H(Z)}) = 0$ for $Z\in M, k=1,\ldots, d'$ and a real-analytic mapping $\rho'=(\rho'_1,\ldots, \rho'_{d'})$ defined in a neighborhood of $H(p)$ and vanishing on $M'$. Taking derivatives w.r.t. CR vector fields $L$ of $M$ one gets
\begin{align}
\label{eq:finiteHolNondeg}
\sum_j a_j(Z) L^{\beta_m} {\rho'_k}_{Z_j}(H(Z),\overline{H(Z)}) = 0, \quad Z \in M,\quad 1\leq k \leq d',
\end{align}
for multiindices $\beta_m \in \N^n$ for $m = 1,\ldots,N'$. Use coordinates as given in e.g.  \cite{BERbook}*{Proposition 1.3.6} for $M$ in \eqref{eq:finiteHolNondeg}, i.e.  $Z=(z,u+i\phi(z,\bar z,u)) \in \C^n \times \C^d$, where $\phi$ is defined near $0$ in $\R^{2N+d}$ with values in $\R^d$ satisfying $\phi(p)=0$ and $d\phi(p)=0$. Taking derivatives w.r.t. $z$ and $u$ one gets:
\begin{align*}
\sum_j {a_j}_{Z_r}(Z) L^{\beta_m} {\rho'_k}_{Z_j}(H(Z),\overline{H(Z)}) + \text{l.o.t.} = 0, \quad 1\leq r \leq N, 1\leq k \leq d',
\end{align*}
where the expression ``l.o.t.'' stands for terms vanishing at $Z=p$. Evaluating at $Z=p$ one gets that $\sum_j {a_j}_{Z_r}(p) L^{\beta_m} {\rho'_k}_{Z_j}(H(Z),\overline{H(Z)})|_{Z=p} = 0$ for $1\leq k \leq d'$. Thus, since $H$ is finitely nondegenerate at $p$, there are multiindices $\gamma_m\in \N^n$ and integers $k_m \in \N$ with $1\leq k_m\leq d'$, such that the matrix $(L^{\gamma_m} {\rho'_{k_m}}_{Z_j}(H(Z),\overline{H(Z)})|_{Z=p})_{1\leq j,m\leq N'}$ is of full rank, which implies that ${a_j }_{Z_\ell}(p) = 0$. Proceeding inductively shows that  all derivatives of $a_j(Z)$ have to vanish at $p$. This means that the holomorphic vector field $X$ vanishes in a neighborhood of $p$ on the generic submanifold $M$, hence $X \equiv 0$, which implies that $H$ is holomorphically nondegenerate.

To show (c) let $\{X_1,\ldots, X_s\}$ be a basis of the space of holomorphic vector fields at $p\in M$ tangent to $M'$ along the image of $H$ and take $X\in \Span_{\C}\{X_1,\ldots, X_s\}$. Consider the following equation for $1\leq k \leq d'$:
\begin{align*}
X(Z) \cdot {\rho'_k}_{Z}(H(Z),\overline{H(Z)})=0, \quad Z\in M.
\end{align*}
Take derivatives w.r.t. $L$ and since $X$ is holomorphic, it holds that,
\begin{align*}
X(Z) \cdot L^{\beta} {\rho'_k}_{Z}(H(Z),\overline{H(Z)}) = 0, \quad Z\in M,
\end{align*}
for any multiindex $\beta\in \N^n$. This implies that for any sequence $\beta_1,\ldots, \beta_{N'}\in \N^n$ of multiindices and integers $k_m \in \N$ with $1\leq k_m\leq d'$ the vector field $X$ belongs to the kernel of the matrix $(L^{\beta_m} {\rho'_{k_m}}_{Z}(H(Z),\overline{H(Z)}))_{1\leq m \leq N'}$. Hence outside a proper real-analytic variety $Y$ of a neighborhood of $p$ it holds that for any $\ell\in \N$ one has $\dim_{\C} E_\ell'(q) = N'-s$ for $q \in Y$, such that the degeneracy of $H$ is equal to $s$ for all $q\in Y$.
\end{proof}

\subsection{Infinitesimal automorphisms of the unit sphere}
\label{sec:generalInfAuto}
In the following the well-known infinitesimal automorphisms of $\Sphere{2n-1}, n \geq 2$ are listed for later reference. 
For $A=(A_1, \ldots, A_n) \in \hol(\Sphere{2n-1})$ the $j$-th component is given as follows:
\begin{align*}
A_j = \alpha_j + \sum_{1\leq \ell \leq j-1} \beta_j^\ell z_\ell + i s_j z_j - \sum_{j+1\leq \ell \leq n} \bar \beta_\ell^j z_\ell - \sum_{1\leq \ell \leq n} \bar \alpha_\ell z_j z_\ell,
\end{align*}
where $\alpha_m, \beta_m^{\ell} \in \C$ and $s_m \in \R$ and $\dim_{\R} \hol(\Sphere{2n-1})= n(n+2)$. 
The following notation is required:
\begin{align*}
S_n^1 & =  \Bigl(\alpha_j - \sum_{1\leq \ell \leq n} \bar \alpha_\ell z_j z_\ell\Bigr)_{1\leq j \leq n}, & \qquad 
S_n^2 & = \Bigl(\sum_{1\leq \ell \leq j-1} \beta_j^\ell z_\ell - \sum_{j+1\leq \ell \leq n} \bar \beta_\ell^j z_\ell \Bigr)_{1\leq j \leq n},\\
S_n^3 & =  i (s_j z_j)_{1\leq j \leq n}.
\end{align*}

For a map $H: \Sphere{2k-1} \rightarrow \Sphere{2m-1}$ any $T \in \aut(H)$ can be written as $T = T_1 + \ldots + T_4$, such that the $T_j$ are given as follows:
\begin{align*}
T_1 & =\alpha' - (\bar \alpha' \cdot H) H,
& T_2  &  = V' H, \\
T_3  & = H_*  \alpha -(\bar \alpha \cdot  z) H_* z,
& T_4  &  = H_* (S_k^2+S_k^3),
\end{align*}
where $\alpha'  = (\alpha'_1,\ldots, \alpha'_{m}) \in \C^{m}$, $V'\in\C^{m}\times \C^{m}$ is a matrix satisfying $V' + \prescript{t}{}{\bar V'} = 0$, $\alpha = (\alpha_1,\ldots, \alpha_k) \in \C^k$ and $z=(z_1,\ldots,z_k) \in \C^k$.

\subsection{The reflection matrix}
\label{sec:refMatrix}
The following definition is a summary of \cite{DAngelo03}*{Definition 2.1, 2.2} introducing the homogenization and reflection map (which appears in the study of the $X$-variety, see also \cite{DAngelo07a} for the case of hyperquadric maps): Denote by $\mathcal H(n,d)$ the complex vector space of homogeneous polynomials of degree $d$ in $n$ holomorphic variables $z = (z_1, \ldots, z_n)$. Write $\bar {\mathcal H}(n,d)$ for the complex vector space with basis consisting of homogeneous polynomials of degree $d$ in $n$ anti-holomorphic variables $\bar z = (\bar z_1, \ldots, \bar z_n)$.

\begin{definition}
\label{def:VH}
Let $H = \frac P Q: U \subset \C^n \rightarrow \C^m$ be a rational map of degree $d$ (not necessarily a sphere map), where $P=(P_1, \ldots, P_m)$ and $Q: \C^n \rightarrow \C$ with $Q \neq 0$ on $U$. Write $H = \frac 1 Q \sum_{k=0}^d P^k$, where $P^k$ is homogeneous of order $k$. 
Define the \textit{reflection map} $C_H: \C^m \rightarrow \bar {\mathcal H}(n,d)$ of $H$ by
\begin{align}
\label{eq:DefVH}
C_H(X) \coloneqq  X \cdot \frac 1 {\bar Q} \sum_{k=0}^d \bar P^k(\bar z) \|z\|^{2(d-k)}, \quad X \in \C^m. 
\end{align} 
Since $C_H$ is linear and $C_H(X) \in \bar {\mathcal H}(n,d)$ there exists a matrix $V_H: \C^m \rightarrow \C^{K(n,d)}$ with holomorphic entries, such that 
 \begin{align*}
C_H(X) =  V_H X \cdot \frac{\bar H^d_n}{\bar Q}, \quad X \in \C^m,
\end{align*}
and denote $V \coloneqq Q V_H$. The matrix $V$ is referred to as \textit{reflection matrix of $H$}. 
\end{definition}

Several properties of the reflection matrix and examples involving $V$ are given in \cite{DAngelo03}. 

\section{Examples and constructions for sphere maps}
\label{sec:NonDegSphereMaps}

In this section some particular examples of sphere maps and constructions of sphere maps are presented and their relation to the above nondegeneracy conditions are discussed.

\subsection{The homogeneous sphere maps}
\label{sec:homMap}

The purpose of this section is to show some properties of the homogeneous sphere maps defined as follows:

\begin{definition}
\label{def:homSphereMap}
For $d\geq 1$ and $n\geq 2$ define $K(n,d)=\binom{n+d-1}{d}$ and $I(n,d)$ as the set of all multiindices $\alpha \in \N^n$ of length $d$ equipped with the lexicographic order. Define the \textit{homogeneous sphere map $H_n^d$ in $n$ variables of degree $d$} as
\begin{align*}
H^d_n: \Sphere{2n-1} \rightarrow \Sphere{2K(n,d)-1}, \quad H^d_n(z) = \left( a^{dn}_\alpha z^\alpha \right)_{\alpha\in I(n,d)}, \quad (a^{dn}_\alpha)^2 = \binom{\alpha}{d}.
\end{align*}
\end{definition}

A direct computation or \cite{DX17}*{Theorem 4.2} shows that the infinitesimal stabilizer of $H^d_n$ is given by $S_n^2$ and $S_n^3$.
One can show that $H^d_n$ is holomorphically nondegenerate by using the Fourier coefficient technique as in \cite{DAngelo88b}*{Lemma 16}. Instead of showing this, it is proved that $H_n^d$ is finitely nondegenerate on $\Sphere{2n-1}$. Before giving a proof of this fact some preparations are needed:

For $n\geq 3$ we define CR vector fields of $\Sphere{2n-1}$  by $\bar L_{ij} = z_i \frac{\partial}{\partial \bar z_j} - z_j \frac{\partial}{\partial \bar z_i}$ for $1 \leq i \neq j \leq n$. For $n=2$ the CR vector field of $\Sphere{3}$ is given by $\bar L = z\frac{\partial}{\partial \bar w}-w\frac{\partial}{\partial \bar z}$.

Let $\{X_{ij}: 1\leq i,j\leq n\}$ be a collection of vector fields. In order to denote powers of such vector fields the following notation is used: Define the set $J \coloneqq \{\alpha =(\alpha_1,\ldots, \alpha_n)\in \N^{3n}: \alpha_j=(\alpha_j^1,\alpha_j^2,\alpha_j^3) \in \N^3\}$ and for $\alpha \in J$ write $X^\alpha \coloneqq X_{\alpha_1^1\alpha_1^2}^{\alpha_1^3} \cdots X_{\alpha_n^1\alpha_n^2}^{\alpha_n^3}$. Define $|\alpha| = \sum_{j=1}^n \alpha_j^3$ for $n\geq 3$.

For two vector fields $X$ and $Y$, their Lie bracket is denoted by $[X,Y] \coloneqq X(Y)-Y(X)$.

In the following lemma some basic facts about CR vector fields and their commutators are given. The proofs consist of straight forward calculations and are omitted.

\begin{lemma}
\label{lem:commutator}
Assume $n\geq 3$. In the following for $1\leq i,j,k,\ell \leq n$ assume $i\neq j$ and $k,\ell\not\in\{i,j\}$. Define the following vector fields for $\Sphere{2n-1}$:
\begin{align*}
 T_{kj} & \coloneqq  [L_{ij},\bar L_{ik}] = -\bar z_k \frac{\partial}{\partial \bar z_j} + z_j \frac{\partial}{\partial z_k},\\
 S_{ij} & \coloneqq  [L_{ij},\bar L_{ij}] = - z_i \frac{\partial}{\partial z_i} - z_j \frac{\partial}{\partial z_j}+ \bar z_i \frac{\partial}{\partial \bar z_i} + \bar z_j \frac{\partial}{\partial \bar z_j}.
\end{align*}
Then $T_{jk} = - \bar T_{kj}$ and $S_{ij} = S_{ji}$ and the following commutator relations hold: 
\begin{itemize}
\item[(a)] $[T_{k\ell},\bar L_{ij}] = [T_{kj},\bar L_{ij}] = [T_{ki},\bar L_{ij}] = [T_{ij},\bar L_{ij}] = [T_{ji},\bar L_{ij}] = 0$, $[T_{jk},\bar L_{ij}]= \bar L_{ik}, [T_{ki},\bar L_{kj}] = \bar L_{i j}$. 
\item[(b)] $[S_{k\ell},\bar L_{ij}] = 0$, $[S_{kj},\bar L_{ij}] =[S_{ki},\bar L_{ij}]  = - \bar L_{ij}, [S_{ij},\bar L_{ij}] = - 2 \bar L_{ij}$.
\item[(c)] $[T_{k\ell}, L_{ij}] = [T_{jk}, L_{ij}] = [T_{ki},\bar L_{kj}] = [T_{ij}, L_{ij}]  = [T_{ji}, L_{ij}] = 0$, $[T_{kj}, L_{ij}] = L_{ki}, [T_{ki}, L_{ij}] = L_{jk}$.
\item[(d)] $[S_{k\ell},L_{ij}] = 0$, $[S_{kj},L_{ij}] =[S_{ki},L_{ij}]  = L_{ij}$, $[S_{ij}, L_{ij}] = 2 L_{ij}$.
\item[(e)] $[S_{ij},T_{k\ell}] = [S_{ij},T_{ij}] = 0$, $[S_{ij},T_{j\ell}] = T_{j\ell}$, $[S_{ij},T_{i\ell}] = T_{i\ell}$.
\end{itemize}
For $n=2$ define the following vector field for $\Sphere{3}$:
\begin{align*}
S\coloneqq [L,\bar L]= - \bar z \frac{\partial}{\partial \bar z} - \bar w \frac{\partial}{\partial \bar w} +z \frac{\partial}{\partial z} + w \frac{\partial}{\partial w}.
\end{align*}
Then it holds that $[S,\bar L] = 2 \bar L$.
\end{lemma}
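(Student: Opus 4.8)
The plan is to reduce every bracket in the lemma to an elementary matrix computation. All the vector fields appearing in the statement---$\bar L_{ij}$, its conjugate $L_{ij} = \bar z_i \partial_{z_j} - \bar z_j \partial_{z_i}$, together with $T_{kj}$ and $S_{ij}$---are \emph{linear}, i.e. of the form $X_A = \sum_{p,q} A_{pq}\, x_q\, \partial_{x_p}$ in the $2n$ coordinates $x = (z_1,\ldots,z_n,\bar z_1,\ldots,\bar z_n)$, with $A \in \mathfrak{gl}_{2n}(\C)$. First I would record the elementary fact that $A \mapsto X_A$ reverses brackets, $[X_A,X_B] = X_{-[A,B]}$, which follows at once from $X_A(x_\rho) = (Ax)_\rho$. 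Writing $\bar\jmath$ for the index $n+j$ and $E_{pq}$ for the matrix units, one has $\bar L_{ij} = X_{E_{\bar\jmath\, i}-E_{\bar\imath\, j}}$ and $L_{ij}=X_{E_{j\,\bar\imath}-E_{i\,\bar\jmath}}$, so the entire problem is transported into $\mathfrak{gl}_{2n}(\C)$.

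Then I would compute the two defining brackets $T_{kj}=[L_{ij},\bar L_{ik}]$ and $S_{ij}=[L_{ij},\bar L_{ij}]$ through the single identity $[E_{pq},E_{rs}] = \delta_{qr}E_{ps} - \delta_{sp}E_{rq}$ and read off the coordinate expressions in the statement; here the hypotheses $i\neq j$ and $k,\ell\notin\{i,j\}$ are exactly what forces the relevant Kronecker deltas to vanish. The conjugation and symmetry relations $T_{jk}=-\overline{T_{kj}}$ and $S_{ij}=S_{ji}$ then follow formally: complex conjugation of vector fields satisfies $\overline{[X,Y]}=[\bar X,\bar Y]$, and $\bar L_{ji}=-\bar L_{ij}$, $L_{ji}=-L_{ij}$, whence $S_{ji}=[L_{ji},\bar L_{ji}]=[-L_{ij},-\bar L_{ij}]=S_{ij}$, and analogously $-\overline{T_{kj}}=[L_{ik},\bar L_{ij}]=T_{jk}$.

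For items (a)--(e) I would expand each bracket as a commutator of sums of two matrix units and apply the matrix-unit identity termwise. Every listed vanishing bracket corresponds to all four $\delta$'s being zero under the index constraints, while each nonzero bracket collapses to $\pm$ a single $E_{pq}-E_{rs}$ pair that is again one of $\bar L$, $L$, $T$, or $S$. The diagonal field $S_{ij}=X_{C}$ makes (b), (d), (e) especially quick, since for diagonal $C=\sum_a c_a E_{aa}$ one has $[C,E_{pq}]=(c_p-c_q)E_{pq}$, so these brackets are obtained by reading off two entries of $C$. The $n=2$ case is the same computation with the single field $\bar L = z\partial_{\bar w}-w\partial_{\bar z}$ and its conjugate: one finds $S=[L,\bar L]=X_{E_{11}+E_{22}-E_{33}-E_{44}}$, which is diagonal, and the diagonal rule immediately gives $[S,\bar L]=2\bar L$.

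The computations are entirely routine, so the only genuine obstacle is bookkeeping: keeping the index constraints straight and, above all, respecting the \emph{order} of the subscripts---the difference between $T_{kj}$ and $T_{jk}$, and between $[T_{kj},\bar L_{ij}]$ and $[T_{jk},\bar L_{ij}]$---since a single transposed pair of indices is precisely what separates a vanishing bracket from one that produces another vector field. Casting everything in matrix-unit form makes these sign and index choices mechanical and minimizes the risk of error, which is why I would organize the whole proof around the anti-homomorphism $A\mapsto X_A$ rather than differentiating the operators by hand.
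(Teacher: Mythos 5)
Your proposal is correct and is essentially the approach the paper takes: the paper's proof simply states that these identities ``consist of straight forward calculations and are omitted,'' and your reduction to matrix units in $\mathfrak{gl}_{2n}(\C)$ via the bracket-reversing map $A\mapsto X_A$ is a clean, mechanical way to carry out exactly those calculations. One check worth performing when you execute it: your own formalism gives $[L_{ij},\bar L_{ik}] = X_{E_{jk}-E_{\bar k\bar\jmath}} = -\bar z_j\,\partial_{\bar z_k}+z_k\,\partial_{z_j}$, which under the displayed coordinate convention is $T_{jk}$ rather than $T_{kj}$, so the lemma's defining line contains a transposed index pair (the bracket realizing the displayed $T_{kj}$ is $[L_{ik},\bar L_{ij}]$) that you should resolve before verifying (a)--(e) -- precisely the kind of bookkeeping hazard you flag.
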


\begin{lemma}
\label{lem:homMapNonDeg}
The map $H^d_n: \Sphere{2n-1} \rightarrow \Sphere{2K(n,d)-1}$ is $d$-nondegenerate at each point of $\Sphere{2n-1}$.
\end{lemma}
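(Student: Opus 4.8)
\emph{Setup and reduction to a single point.} The plan is to exploit the $U(n)$-equivariance of $H^d_n$ and then to read off $d$-nondegeneracy as a statement about recovering the coefficients of a homogeneous antiholomorphic polynomial by CR-differentiation. Since $\Sphere{2K(n,d)-1}$ is cut out by $\rho'(z',\bar z')=\|z'\|^2-1$, one has $\rho'_{z'}=\bar z'$, so $\rho'_{z'}(H(z),\overline{H(z)})=\overline{H(z)}$ and hence
\[
E_k'(p)=\Span_{\C}\{\bar L^\alpha \overline{H^d_n(z)}|_{z=p}:|\alpha|\le k\}\subset \C^{K(n,d)}.
\]
Because $U(n)$ acts transitively on $\Sphere{2n-1}$ and $H^d_n(Uz)=\widetilde U\,H^d_n(z)$ for a unitary matrix $\widetilde U=\widetilde U(U)$, and since finite nondegeneracy together with its order $k_0$ is invariant under composition with automorphisms on source and target, it suffices to argue at $p=e_1=(1,0,\ldots,0)$. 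Near $e_1$ the vector fields $\bar L_{12},\ldots,\bar L_{1n}$ form a basis of the CR vector fields (indeed $z_1\bar L_{ij}=z_i\bar L_{1j}-z_j\bar L_{1i}$), so I may compute $E_k'(e_1)$ using products of these.

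\emph{Upper bound: order $\le d$ suffices.} Pairing with $X\in\C^{K(n,d)}$ and using $X\cdot\bar L^\alpha\overline{H^d_n}=\bar L^\alpha(X\cdot\overline{H^d_n})$, the question becomes whether a homogeneous antiholomorphic polynomial $f(\bar z)=X\cdot\overline{H^d_n}=\sum_{\alpha\in I(n,d)}X_\alpha a^{dn}_\alpha\bar z^\alpha$ with $\bar L^\alpha f|_{e_1}=0$ for all $|\alpha|\le d$ must vanish. The $\bar L_{1j}$ commute (they are $\partial_{\bar z}$-vector fields with holomorphic coefficients), and I would compute that for $\beta\in I(n,d)$ the order-$(d-\beta_1)$ operator $D_\beta\coloneqq\bar L_{12}^{\beta_2}\cdots\bar L_{1n}^{\beta_n}$ satisfies $D_\beta\bar z^\gamma|_{e_1}=\bigl(\prod_{j\ge2}\beta_j!\bigr)\delta_{\gamma\beta}$. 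The mechanism is that in $\bar L_{1j}=z_1\partial_{\bar z_j}-z_j\partial_{\bar z_1}$ only the first summand contributes at $e_1$: the second carries a holomorphic factor $z_j$ ($j\ge2$) that is never touched by subsequent CR differentiations and so vanishes upon evaluation at $e_1$; modulo such terms $D_\beta$ acts as $\partial_{\bar z_2}^{\beta_2}\cdots\partial_{\bar z_n}^{\beta_n}$ with $z_1=1$. Hence $D_\beta f|_{e_1}=\bigl(\prod_{j\ge2}\beta_j!\bigr)X_\beta a^{dn}_\beta$, forcing all $X_\beta=0$; equivalently the vectors $D_\beta\overline{H^d_n}|_{e_1}$ are nonzero multiples of the standard basis vectors, so $E_d'(e_1)=\C^{K(n,d)}$.

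\emph{Sharpness: order $d-1$ does not suffice.} To show $E_{d-1}'(e_1)\subsetneq\C^{K(n,d)}$ I would exhibit a nonzero annihilator, namely $f=\bar z_2^d$, corresponding to $X=(a^{dn}_{(0,d,0,\ldots,0)})^{-1}e_{(0,d,0,\ldots,0)}\neq0$. Applying any product of at most $d-1$ CR vector fields $\bar L_{ij}$ to $\bar z_2^d$, the antiholomorphic part of every resulting monomial stays a pure power $\bar z_2^m$ (only $\partial_{\bar z_2}$ acts nontrivially on a power of $\bar z_2$, and no $\bar z_1$ can ever be created), and at most $d-1$ reductions leave $m\ge1$; thus every term carries a factor $\bar z_2$ and vanishes at $e_1$. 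Therefore $\bar L^\alpha f|_{e_1}=0$ for all $|\alpha|\le d-1$. Combined with the upper bound this shows the chain stabilizes exactly at $k_0=d$ (as $E_{d-1}'(e_1)\neq E_d'(e_1)=\C^{K(n,d)}$), so $H^d_n$ is $d$-nondegenerate at $e_1$, hence everywhere by the reduction above.

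\emph{Main obstacle.} The one computation requiring care is the coefficient-extraction identity $D_\beta\bar z^\gamma|_{e_1}=\bigl(\prod_{j\ge2}\beta_j!\bigr)\delta_{\gamma\beta}$, and specifically the rigorous justification that the ``bad'' contributions from the $-z_j\partial_{\bar z_1}$ part of each $\bar L_{1j}$ never survive at $e_1$. The clean way is to observe that the holomorphic factors $z_j$ ($j\ge2$) they introduce cannot be removed by further antiholomorphic CR differentiations and hence persist until evaluation at $e_1$, where they vanish; the remainder is bookkeeping with multinomial coefficients. Everything else—the equivariance reduction, the commutativity of the $\bar L_{1j}$, and the $\bar z_2^d$ sharpness example—is routine.
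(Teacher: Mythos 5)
Your proof is correct, but it takes a genuinely different route from the paper's. The paper works at an arbitrary point with $z_m\neq 0$ and proves full rank of an explicit $K(n,d)\times K(n,d)$ matrix of derivatives $L^\alpha H^d_n$ of orders $0,\dots,d$ in two steps: an orthogonality relation $L^\alpha H\cdot\bar L^\beta T^\gamma S^\delta\bar H=0$ for $|\alpha|<|\beta|$, proved by induction on $|\alpha|$ using the commutator identities of \cref{lem:commutator}, followed by a separate linear-independence argument for derivatives of equal order based on the lexicographic order of monomials. You instead reduce to the single point $e_1$ via the $U(n)$-equivariance $H^d_n(Uz)=\widetilde U H^d_n(z)$ (which indeed holds: $\|H^d_n(Uz)\|^2=\|Uz\|^{2d}=\|H^d_n(z)\|^2$ and the linear independence of the monomials $z^\alpha\bar z^\beta$ force the induced linear map to be unitary) together with the biholomorphic invariance of $(k_0,s)$, and then extract coefficients directly with the commuting operators $\bar L_{12},\dots,\bar L_{1n}$. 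Your key identity $D_\beta\bar z^\gamma|_{e_1}=\bigl(\prod_{j\ge2}\beta_j!\bigr)\delta_{\gamma\beta}$ checks out: the $-z_j\partial_{\bar z_1}$ summands introduce holomorphic factors $z_j$, $j\ge 2$, which no subsequent antiholomorphic differentiation can remove and which vanish at $e_1$, and for $\gamma,\beta$ both of length $d$ the condition $\gamma_j=\beta_j$ for $j\ge 2$ already forces $\gamma=\beta$. Your route is shorter and, unlike the paper's argument as written, makes the sharpness $E_{d-1}'(e_1)\neq\C^{K(n,d)}$ fully explicit via the annihilator $\bar z_2^d$. What the paper's approach buys is an explicit full-rank family of derivative multiindices at every point with $z_m\neq 0$, obtained without passing through equivariance; these multiindices are reused verbatim in the proof of \cref{prop:equivFiniteNonDeg}, though your operators $D_\beta$ at $e_1$, transported by equivariance, would serve that purpose equally well.
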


\begin{proof}
Set $H\coloneqq H^d_n$, fix $1\leq m \leq n$ and define the following set of multiindices 
\begin{align*}
J_m\coloneqq\{\alpha=(\alpha_1,\ldots, \alpha_n)\in J: \alpha_j = (m,j,k_j) \in \N^3, 1\leq  j \leq n, k_m=0\},
\end{align*}
where the index set $J$ from the beginning of \cref{sec:homMap} is used.
It will be shown that the $K(n,d) \times K(n,d)$-matrix 
\begin{align*}
A_m \coloneqq \left(L^\alpha H: \alpha\in J_m, 0\leq |\alpha|\leq d \right),
\end{align*}
is of full rank if $z_m \neq 0$. This implies that $H$ is $d$-nondegenerate at each point of  $\Sphere{2n-1}$. 
The proof consists of two steps:
\begin{itemize}
\item[\textbf{(A)}] It is proved that
\begin{align}
\label{eq:orthoDeriv}
L^\alpha H \cdot \bar L^\beta T^\gamma S^\delta \bar H = 0,
\end{align}
on $\Sphere{2n-1}$, for all multiindices $\alpha,\beta \in J_m$ with $|\alpha|<|\beta|$ and all $\gamma\in J,\delta\in J_m$, where $J$ is defined in the beginning of \cref{sec:homMap}.
\item[\textbf{(B)}] It is shown that for each $0\leq k \leq d$ the set $D_m^k\coloneqq \{L^\alpha H|_{\Sphere{2n-1}}: \alpha\in J_m, |\alpha|=k\}$ consists of linearly independent vectors in $\C^{K(n,d)}$ if $z_m\neq 0$.
\end{itemize}
Observe that the number of elements in $D_m^k$ is equal to $K(n-1,k)$, such that $\sum_{k=0}^d K(n-1,k) = K(n,d)$.
Thus, the two steps together imply that the matrix $A_m$ consists of linearly independent rows if $z_m\neq 0$. 
 
In order to show (A) one proceeds by induction on the length of $\alpha$ in \eqref{eq:orthoDeriv}: For $|\alpha|=0$ one needs to argue as follows. Since $H \cdot \bar H = 1$ on $\Sphere{2n-1}$, it follows that $p_{H,\beta}\coloneqq H \cdot \bar L^\beta \bar H = 0$ on $\Sphere{2n-1}$ for all $\beta \in J_m$. This means that $p_{H,\beta}$ is a homogeneous polynomial vanishing on $\Sphere{2n-1}$, hence $p_{H,\beta}$ vanishes in $\C^n$, see \cite{DAngelo91}*{section II} or \cite{DAngeloBook}*{section 5.1.4}. Applying $\bar z_k\frac{\partial}{\bar z_j}$ to $p_{H,\beta} \equiv 0$, implies that $H \cdot \bar L^\beta T^\gamma S^\delta \bar H= 0$ for all $\gamma\in J$ and $\delta \in J_m$.

Assume that \eqref{eq:orthoDeriv} holds for $|\alpha|=k$ and $|\alpha|+1 < |\beta|$. If one applies $L_{mj}$ to \eqref{eq:orthoDeriv} one obtains: 
\begin{align*}
L_{mj} L^\alpha H \cdot \bar L^\beta T^\gamma S^\delta \bar H + L^\alpha H \cdot L_{mj}\bar L^\beta T^\gamma S^\delta \bar H =0,
\end{align*}
on $\Sphere{2n-1}$. If one can show that
\begin{align}
\label{eq:indStep}
L^\alpha H \cdot L_{mj}\bar L^\beta T^\gamma S^\delta \bar H =0,
\end{align}
the induction is completed and (A) is proved. In order to show \eqref{eq:indStep}, use the identities from \cref{lem:commutator}, which imply that the expression of the left-hand side of \eqref{eq:indStep} can be rewritten as a sum of terms of the form $L^\alpha H \cdot \bar L^{\beta'} T^{\gamma'} S^{\delta'} L_{mj} \bar H$ and $L^\alpha H \cdot \bar L^{\beta'} T^{\gamma'} S^{\delta'} \bar H$, where $|\beta'|\geq |\beta|-1, \gamma'\in J$ and $\delta'\in J_m$. Hence using the induction hypothesis proves \eqref{eq:indStep}.

To prove (B) fix $0\leq k \leq d$ and assume that the set $D_m^k$ consists of vectors which are not linearly independent. By setting $K\coloneqq \{\alpha \in J_m: |\alpha|=k\}$, the linear dependence says that there are $c_\alpha\in \C$, not all of them are zero, such that
\begin{align}
\label{eq:linDepHom}
\sum_{\alpha\in K} c_\alpha L^\alpha H = 0,
\end{align}
on $\Sphere{2n-1}$. Since the left-hand side is a homogeneous polynomial, \eqref{eq:linDepHom} holds in $\C^n$. For $\alpha \in K$ define the multiindex
\begin{align*}
r(\alpha) \coloneqq \left((1,k_1),\ldots,(m-1,k_{m-1}),(m+1,k_{m+1}),\ldots,(n,k_n)\right)\in\N^{2(n-1)},
\end{align*}
such that $z^{r(\alpha)}\coloneqq z_1^{k_1} \dots z_{m-1}^{k_{m-1}} z_{m+1}^{k_{m+1}} \cdots z_n^{k_n}$. Note that $L^\alpha = \bar z_m^k \frac{\partial^k}{\partial z^{r(\alpha)}} + \ldots$, where $\ldots$ stands for derivatives of order $k$ with coefficients being monomials of degree $k$ containing $\bar z_m^\ell$ for $\ell < k$. Define $R_\alpha \coloneqq\frac{\partial^k}{\partial \bar z_m^k} L^\alpha$. 
Assume $z_m\neq 0$. Expanding \eqref{eq:linDepHom} as a power series in $\bar z$ with vector-valued coefficients, the coefficient of $\bar z_m^k$ is given by 
\begin{align}
\label{eq:linDepHom2}
\sum_{\alpha_K} c_\alpha  R_\alpha H = 0.
\end{align}
Note that in the vector $R_\alpha H\in\C^{K(n,d)}$ each monomial of $H^{d-k}$ appears in exactly one component and it is of the following form: 
\begin{align*}
R_{\alpha} H = (0,\ldots, 0, m^1_{t(\alpha),j_1(\alpha)},\ast, \ldots, \ast),
\end{align*}
where $1\leq j_1(\alpha)\leq K(n,d)$ and $m^1_{t(\alpha),j_1(\alpha)} = c_{t(\alpha)} z^{t(\alpha)} \neq 0$ such that $t(\alpha) \in \N^n$ with $|t(\alpha)| = K(n,d-k)$. 

Consider the minimal $\alpha_0 \in K$ w.r.t. the lexicographic order. If $m=1$, then $\alpha_0 = (2,\ldots, 2)$, and otherwise $\alpha_0 = (1,\ldots, 1)$. This implies that $j_1(\alpha)\geq j_1(\alpha_0)$ for all $\alpha>\alpha_0$.
Moreover $t(\alpha)> t(\alpha_0)$ for all $\alpha>\alpha_0$, which can be seen as follows:
Denote the monomial in $H^d_n$ at the $k$-th position by $h_{s(k),k}=a_k z^{s(k)}$, where $s(k) \in \N^n$ with $|s(k)| = K(n,d)$. 
Note that $s(j_1(\alpha)) = \alpha + t(\alpha)$ and for $\alpha>\alpha_0$ if $j_1(\alpha_0)\leq j_1(\alpha)$, then $s(j_1(\alpha_0)) \leq s(j_1(\alpha))$. Hence
\begin{align*}
\alpha_0 + t(\alpha_0) = s(j_1(\alpha_0)) \leq s(j_1(\alpha)) = \alpha + t(\alpha),
\end{align*}
which implies that $t(\alpha_0) < t(\alpha)$. Thus, in \eqref{eq:linDepHom2}, considering the coefficient of $z^{t(\alpha_0)}$ shows that $c_{\alpha_0}=0$.

Proceed inductively and assume $c_\alpha = 0$ for all $\alpha<\alpha_1, \alpha \in K$. Define $K_1 = K\setminus\{\alpha\in K: \alpha < \alpha_1\}$. Argue as above, since $\alpha_1$ is the minimal index in $K_1$ it holds that $j_1(\alpha)\geq j_1(\alpha_1)$ for all $\alpha>\alpha_1$ and the same argument as above shows that $t(\alpha)>t(\alpha_1)$. Thus $c_{\alpha_1}=0$, which proves (B) and completes the proof.
\end{proof}

\subsection{The group invariant sphere maps}

Another important class of sphere maps are the following, first introduced in \cite{DAngelo88b}:
\begin{definition}
\label{rem:groupInvSphereMap}
Define $G^\ell: \Sphere{3} \rightarrow \Sphere{2\ell+3}$ for $\ell \geq 0$ by
\begin{align*}
G^\ell(z,w) = (c_1^{\ell} z^{2\ell+1}, c_2^{\ell} z^{2\ell -1} w, \ldots, c_k^{\ell} z^{2(\ell-k) + 3}w^{k-1},\ldots  , c_{\ell+1}^{\ell} zw^\ell, c_{\ell+2}^{\ell} w^{2\ell +1}),
\end{align*}
where $c_k^{\ell}\geq 0$ for $1\leq k \leq \ell+2$ is given in \cite{DAngelo88b} or \cite{DAngeloBook}*{section 5.2.2, Theorem 9}. 
\end{definition}
The infinitesimal stabilizer of $G^\ell$ consists of the vector field $S^3_2$.
The maps $G^\ell$ are invariant under a fixed-point-free finite unitary group and appear in \cite{DKR03} as so-called sharp polynomials in the study of degree bounds for monomial maps. 


\subsection{The tensor product for infinitesimal deformations}

Similar to the case of sphere maps (\cite{DAngelo88b}, \cite{DAngelo91}*{Definition 4}) one can introduce a tensor operation for infinitesimal deformations.

Let $A \subseteq \C^n$ be a linear subspace such that $\C^n = A \oplus A^{\perp}$ is an orthogonal decomposition. For $v\in \C^n$ write $v = v_A \oplus v_{A^\perp}\in A \oplus A^{\perp}$. Similarly one can decompose the image of a map $F:\Sphere{2n-1}\rightarrow \Sphere{2m-1}$ w.r.t. $A$ and write $F = F_A \oplus F_{A^{\perp}} \in A \oplus A^{\perp}$. 

For vectors $v=(v_1,\ldots, v_n) \in \C^{n}$ and $w=(w_1,\ldots, w_m) \in \C^{m}$ the usual tensor product of $v$ and $w$ is denoted  by
\begin{align*}
v \otimes w = (v_1 w_1, \ldots, v_1 w_m, \ldots, v_n w_1, \ldots, v_n w_m) \in \C^{nm}.
\end{align*}

\begin{definition}
Let $H: \Sphere{2n-1} \rightarrow \Sphere{2m-1}$ and $G: \Sphere{2n-1} \rightarrow \Sphere{2\ell-1}$ be CR maps, $X \in \hol(H)$ and $A \subseteq \C^m$ be a linear subspace, then 
\begin{align*}
T_{(A,G)} X \coloneqq (X_A \otimes G) \oplus X_{A^{\perp}},
\end{align*}
is called the \textit{tensor product of $X$ by $G$ on $A$}.
\end{definition}

We recall that the tensor product of mappings of spheres was introduced in \cite{DAngelo88b} and \cite{DAngelo91}*{Definition 4}: For $f: \Sphere{2n-1} \rightarrow \Sphere{2m-1}$ and $g: \Sphere{2n-1} \rightarrow \Sphere{2\ell-1}$ CR maps and $A \subseteq \C^m$ a linear subspace the \textit{tensor product of $f$ by $g$ on $A$} given by $E_{(A,g)} f  = (f_A \otimes g) \oplus f_{A^{\perp}}$ is a mapping of spheres, see \cite{DAngelo91}*{Lemma 5}. An analogous result holds for infinitesimal deformations:

\begin{lemma}
\label{lem:tensorInfDef}
Let $H: \Sphere{2n-1} \rightarrow \Sphere{2m-1}$ and $G: \Sphere{2n-1} \rightarrow \Sphere{2\ell-1}$ be CR maps, $X\in \hol(H)$ and $A \subseteq \C^m$ be a linear subspace. Then $T_{(A,G)} X \in \hol(E_{(A,G)} H)$.
\end{lemma}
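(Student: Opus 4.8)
The plan is to verify the sphere characterization of infinitesimal deformations recalled just after the definition of $\hol(H)$: since $E_{(A,G)}H: \Sphere{2n-1}\rightarrow \Sphere{2(m_A\ell+m_{A^\perp})-1}$ is a sphere map by \cite{DAngelo91}*{Lemma 5} (writing $m_A=\dim A$ and $m_{A^\perp}=\dim A^\perp$), it suffices to show that the holomorphic vector $T_{(A,G)}X$ satisfies
\begin{align*}
\re\bigl(T_{(A,G)}X(z) \cdot \overline{E_{(A,G)}H(z)}\bigr) = 0, \quad z\in \Sphere{2n-1}.
\end{align*}
Note that $T_{(A,G)}X$ is holomorphic in a neighborhood of $\Sphere{2n-1}$, being assembled from the holomorphic vector $X$ and the holomorphically extended CR map $G$, so the statement is well-posed.

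First I would compute the Euclidean inner product directly from the two orthogonal decompositions $T_{(A,G)}X = (X_A\otimes G)\oplus X_{A^\perp}$ and $E_{(A,G)}H = (H_A\otimes G)\oplus H_{A^\perp}$. The block structure gives
\begin{align*}
T_{(A,G)}X \cdot \overline{E_{(A,G)}H} = (X_A\otimes G)\cdot \overline{(H_A\otimes G)} + X_{A^\perp}\cdot \overline{H_{A^\perp}}.
\end{align*}
The crucial point is the elementary tensor identity $(v\otimes w)\cdot \overline{(v'\otimes w)} = (v\cdot \bar v')(w\cdot \bar w)$, which follows immediately from the definition of $\otimes$. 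Applying it to the first summand with $w=G$ yields $(X_A\cdot \bar H_A)(G\cdot \bar G)$.

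The key step is then to use that $G$ is a sphere map, so $G\cdot \bar G = \|G\|^2 = 1$ on $\Sphere{2n-1}$. Hence on $\Sphere{2n-1}$ the tensor summand collapses to $X_A\cdot \bar H_A$, and combining this with the $A^\perp$-summand and the orthogonality of $\C^m = A\oplus A^\perp$ one obtains
\begin{align*}
T_{(A,G)}X \cdot \overline{E_{(A,G)}H} = X_A\cdot \bar H_A + X_{A^\perp}\cdot \bar H_{A^\perp} = X\cdot \bar H, \quad z\in \Sphere{2n-1}.
\end{align*}
Since $X\in\hol(H)$ means precisely that $\re(X\cdot \bar H)=0$ on $\Sphere{2n-1}$, taking real parts finishes the argument.

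This is essentially a one-line computation once the bookkeeping is in place, so there is no serious obstacle; the only points requiring care are recording that $E_{(A,G)}H$ is indeed a sphere map (so that $\hol(E_{(A,G)}H)$ is defined and the normalization $\|G\|^2=1$ applies) and keeping the tensor-product indexing consistent with the orthogonal splitting of $\C^m$. The identity $G\cdot\bar G=1$ on $\Sphere{2n-1}$ is exactly what makes the tensoring invisible to the pairing against $\bar H$, which is the conceptual reason the statement mirrors \cite{DAngelo91}*{Lemma 5} for maps.
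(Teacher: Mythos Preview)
Your proposal is correct and follows essentially the same approach as the paper: both arguments compute $T_{(A,G)}X\cdot\overline{E_{(A,G)}H}$ via the orthogonal block decomposition, use the tensor identity to factor out $\|G\|^2$, set $\|G\|^2=1$ on $\Sphere{2n-1}$, recombine to $X\cdot\bar H$, and take real parts. The only difference is that you spell out a few bookkeeping details (holomorphicity of $T_{(A,G)}X$, the tensor identity, and that $E_{(A,G)}H$ is a sphere map) that the paper leaves implicit.
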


\begin{proof}
Set $Y = T_{(A,G)} X$ and $F = E_{(A,G)} H$. By orthogonality it holds that on $\Sphere{2n-1}$, 
\begin{align*}
\re(Y \cdot \bar F) & = \re\left( (X_A \otimes G) \cdot (\overline{H_A \otimes G}) + X_{A^\perp} \cdot \overline{H_{A^\perp}} \right) \\
& = \re\left((X_A \cdot \overline{H_A}) \| G \|^2 + X_{A^\perp} \cdot \overline{H_{A^\perp}}\right)  = \re(X \cdot \bar H) = 0, 
\end{align*}
since $\| G \|^2 = 1$ on $\Sphere{2n-1}$, hence $Y \in \hol(F)$.
\end{proof}

The next result shows that holomorphic degeneracy is preserved by tensoring.

\begin{lemma}
\label{lem:tensorHolDeg}
Let $H: \Sphere{2n-1} \rightarrow \Sphere{2m-1}$ be a CR map, $A \subseteq \C^m$ a linear subspace and $G: \Sphere{2n-1}\rightarrow \Sphere{2\ell-1}$ a CR map. If $H$ is holomorphically degenerate then $E_{(A,G)} H$ is holomorphically degenerate.
\end{lemma}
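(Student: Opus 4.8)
The plan is to exhibit an explicit nontrivial holomorphic vector tangent to $\Sphere{2\ell'-1}$ (where $\ell'$ is the target dimension of $E_{(A,G)}H$) along the image of $F \coloneqq E_{(A,G)}H$, starting from the holomorphic degeneracy witness of $H$. By hypothesis there is a holomorphic map $Y: \C^n \rightarrow \C^m$ with $Y \not\equiv 0$ satisfying $Y \cdot \bar H = 0$ on $\Sphere{2n-1}$. Decomposing $Y = Y_A \oplus Y_{A^\perp}$ with respect to $A \oplus A^\perp$, the natural candidate witness for $F$ is $W \coloneqq (Y_A \otimes G) \oplus Y_{A^\perp}$, mirroring the tensor operation $T_{(A,G)}$ used in \cref{lem:tensorInfDef}. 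First I would verify the degeneracy identity: using $\|G\|^2 = 1$ on $\Sphere{2n-1}$ and the orthogonality of the decomposition, one computes
\begin{align*}
W \cdot \bar F = (Y_A \otimes G)\cdot(\overline{H_A \otimes G}) + Y_{A^\perp}\cdot \overline{H_{A^\perp}} = (Y_A \cdot \overline{H_A})\|G\|^2 + Y_{A^\perp}\cdot \overline{H_{A^\perp}} = Y \cdot \bar H = 0,
\end{align*}
which is the anti-holomorphic (full, not merely real-part) annihilation condition characterizing holomorphic degeneracy for sphere maps.

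The second step is to confirm that $W \not\equiv 0$, i.e.\ that the tensor operation does not kill the witness. Here I would split into cases according to whether $Y_A \equiv 0$. If $Y_{A^\perp} \not\equiv 0$ then $W \not\equiv 0$ immediately, since $W$ retains $Y_{A^\perp}$ as its $A^\perp$-block. If $Y_A \not\equiv 0$, then $Y_A \otimes G \not\equiv 0$ because $G$ is a sphere map and hence $G \not\equiv 0$ (indeed $\|G\|^2 = 1$ forces $G$ to be nonvanishing as a map); a nonzero holomorphic map tensored with a nonvanishing holomorphic map is again nonzero. Since at least one of $Y_A, Y_{A^\perp}$ is nonzero, $W \not\equiv 0$ in all cases.

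The third step is to reconcile the notion of holomorphic degeneracy stated in \cref{def:holomorphicDeg} — phrased via the complex gradients ${\rho'_j}_{z'}$ of a defining function — with the concrete sphere condition $W \cdot \bar F \equiv 0$. For the unit sphere the defining function is $\rho'(z',\bar z') = \|z'\|^2 - 1$, whose complex gradient is $\rho'_{z'} = \bar z'$, so the general condition $W \cdot {\rho'}_{z'}(F,\bar F) = 0$ becomes exactly $W \cdot \bar F = 0$ along the image; this is the same translation already used implicitly in the infinitesimal-deformation characterization stated after the definition of $\hol(H)$. Thus the computed identity is precisely the degeneracy condition for $F$.

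I expect the only genuine subtlety — the main obstacle — to be the nonvanishing argument in the case $Y_A \not\equiv 0$: one must be careful that tensoring against $G$ preserves nontriviality, which relies on $G$ being nowhere zero rather than merely not identically zero, a fact guaranteed by $\|G\|^2 \equiv 1$. The remaining computations are the same orthogonality-plus-$\|G\|^2=1$ manipulation already carried out in \cref{lem:tensorInfDef}, so they are routine. A clean way to present the whole argument is to observe that $W = T_{(A,G)}Y$ under the identification of holomorphic-degeneracy witnesses with a degenerate analogue of infinitesimal deformations, and then invoke the computation of \cref{lem:tensorInfDef} verbatim for the identity $W \cdot \bar F = 0$, reducing the new content to the nonvanishing claim.
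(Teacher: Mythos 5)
Your proposal is correct and follows essentially the same route as the paper: the paper likewise takes the degeneracy witness $W$ of $H$, forms $T_{(A,G)}W$, and reuses the computation of \cref{lem:tensorInfDef} without the real part. Your extra care about nontriviality is fine, though not-identically-zero for $G$ already suffices (holomorphic functions on a connected domain form an integral domain), so the nowhere-vanishing of $G$ is not actually needed.
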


\begin{proof}
Since $H$ is holomorphically degenerate there exists a nontrivial holomorphic map $W: \C^n \rightarrow \C^m$ such that $W \cdot \bar H =0$ on $\Sphere{2n-1}$. Write $H' = E_{(A,G)}H$ and consider $W'=T_{(A,G)} W$, which is a nontrivial holomorphic vector. Then the same computation (without taking the real part) as in the proof of \cref{lem:tensorInfDef} shows that $W' \cdot \bar H'=0$ on $\Sphere{2n-1}$, i.e. $H'$ is holomorphically degenerate.
\end{proof}

\begin{example}
The converse of \cref{lem:tensorHolDeg} is not true in general:  Consider the holomorphically nondegenerate maps $H:\Sphere{3}\rightarrow \Sphere{7}, H(z,w)=(z, z w, z^2 w, w^3)$ and $G:\Sphere{3}\rightarrow \Sphere{5}, G(z,w)=(z,zw,w^2)$. Tensoring $H$ at the first component with $G$ one obtains the holomorphically degenerate map $F'(z,w) = (z^2, z^2 w, z w^2, z w, z^2 w, w^3)$. Moreover, if one applies a unitary change of coordinates and a projection $\C^6 \rightarrow \C^5$ to $F'$, one obtains $F(z,w) = (z^2, z w, \sqrt{2} z^2 w, z w^2, w^3)$, which still is holomorphically degenerate: the holomorphic vector $X=(0,-1,z /\sqrt{2}, w ,0)$ satisfies $X \cdot \bar F = 0$ on $\Sphere{3}$.
\end{example}

\begin{example}
For a sphere map $H$ its trivial infinitesimal deformations may give rise to nontrivial infinitesimal deformations of tensors of $H$: Let $H$ be the map
\begin{align*}
H^3=(z^3,\sqrt{3} z^2 w, \sqrt{3} z w^2,w^3)
\end{align*}
and $A$ be the complex subspace spanned by $(1,0\ldots, 0)\in\C^4$. Write $H=(H_1,\ldots, H_4)$ and define $X = (a,0,\ldots, 0) - (\bar a H_1) H \in \aut(H), a \in \C\setminus\{0\}$. Then it is straightforward to show that $T_{(A,H^1)} X \not \in \aut(E_{(A,H^1)} H)$.
\end{example}

\begin{example}
Following \cite{DL16}*{Prop. 2.4} one can combine two sphere maps to construct a new sphere map into a higher dimensional sphere: For $H: \Sphere{2n-1} \rightarrow \Sphere{2m-1}$ and $G: \Sphere{2n-1} \rightarrow \Sphere{2\ell-1}$ real-analytic CR maps define the \textit{juxtaposition of $H$ and $G$ with parameter $t\in [0,1]$} as the map $j_t(H,G):\Sphere{2n-1}\rightarrow \Sphere{2(m+\ell)-1}$ given by
\begin{align*}
j_t(H,G) \coloneqq \sqrt{1-t^2} H \oplus t G, \qquad t\in [0,1].
\end{align*}
Note that $j_t(H,G)$ is holomorphically degenerate: The holomorphic vector field $X =  -t H \oplus \sqrt{1-t^2} G$ satisfies $X \cdot \bar H = 0$. 
\end{example}

\section{Nondegeneracy conditions for sphere maps}
\label{sec:NonDegReflex}

In this section it is shown that holomorphic and finite degeneracy can be expressed in terms of rank conditions of the reflection matrix. 

Holomorphic nondegeneracy of a sphere map is equivalent to a generic rank condition of $V$:

\begin{proposition}
\label{prop:equivDeg}
Let $H: \Sphere{2n-1} \rightarrow \Sphere{2m-1}$ be a rational map of degree $d$.
Then the following statements are equivalent:
\begin{itemize}
\item[(a)] $H$ is holomorphically nondegenerate.
\item[(b)] There is no nontrivial holomorphic map $Y: U \rightarrow \C^m$, where $U$ is a neighborhood of $\Sphere{2n-1}$, such that $VY = 0$ on $\Sphere{2n-1}$.
\item[(c)] The matrix $V$ is generically of rank $m$ on $\Sphere{2n-1}$. 
\end{itemize}
\end{proposition}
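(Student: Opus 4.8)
The plan is to reduce both nondegeneracy conditions to kernel conditions for $V$ by means of the defining identity of the reflection matrix, and then to convert the resulting kernel statements into the rank statement in (c) using the holomorphic nondegeneracy of $H^d_n$. The organizing principle is the following \emph{central observation}: for a holomorphic map $Y: U \to \C^m$ defined near $\Sphere{2n-1}$, one has $Y \cdot \bar H = 0$ on $\Sphere{2n-1}$ if and only if $VY = 0$ on $\Sphere{2n-1}$. Granting this, the equivalence (a) $\Leftrightarrow$ (b) is immediate, since in the sphere case the defining function of $\Sphere{2m-1}$ is $\|z'\|^2-1$, so that $\rho'_{z'}=\bar z'$ and holomorphic degeneracy of $H$ means precisely that some nontrivial holomorphic $Y$ satisfies $Y\cdot \bar H=0$ on $\Sphere{2n-1}$; the observation shows that these are exactly the maps annihilated by $V$.

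To prove the central observation I would substitute $X = Y(z)$ pointwise into the relation defining $V_H$ in \cref{def:VH}, namely $V_H X \cdot \bar H^d_n/\bar Q = X \cdot \bar H$ on $\Sphere{2n-1}$, which is legitimate because this identity is linear and holds for every fixed $X\in\C^m$ at every point of $\Sphere{2n-1}$. Using $V = Q V_H$ this yields, on $\Sphere{2n-1}$,
\begin{align*}
Y \cdot \bar H = \frac{1}{|Q|^2}\,(VY)\cdot \overline{H^d_n}.
\end{align*}
Since $|Q|^2 > 0$ on the sphere, the vanishing of $Y\cdot\bar H$ there is equivalent to the vanishing of $(VY)\cdot\overline{H^d_n}$ there. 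Now $VY$ is holomorphic, and by \cref{lem:homMapNonDeg} together with \cref{prop:propNondeg}(b) the map $H^d_n$ is holomorphically nondegenerate; hence $(VY)\cdot\overline{H^d_n}=0$ on $\Sphere{2n-1}$ forces $VY\equiv 0$. The reverse implication being trivial, the observation follows.

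It then remains to establish (b) $\Leftrightarrow$ (c). For (c) $\Rightarrow$ (b), if $V$ has rank $m$ on a dense open subset of $\Sphere{2n-1}$, then at each such point $V(z)$ is injective as a map $\C^m\to\C^{K(n,d)}$, so any holomorphic $Y$ with $VY=0$ on $\Sphere{2n-1}$ vanishes on a dense subset of the sphere, hence on all of $\Sphere{2n-1}$, and therefore identically, as a holomorphic function vanishing on $\Sphere{2n-1}$ is zero. For the contrapositive of (b) $\Rightarrow$ (c), I would first observe that the generic rank of $V$ on $\Sphere{2n-1}$ equals its generic rank over $\C^n$: each minor of $V$ is holomorphic, so it vanishes identically on $\C^n$ exactly when it vanishes identically on $\Sphere{2n-1}$, the sphere being a set of uniqueness. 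If this common generic rank is $r<m$, the columns of $V$ are linearly dependent over the field of meromorphic functions, which produces a nontrivial meromorphic $Y$ with $VY\equiv 0$; clearing denominators gives a nontrivial holomorphic (in fact polynomial) $Y$ with $VY=0$ on $\Sphere{2n-1}$, contradicting (b).

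The main obstacle is precisely this last direction, the construction of a holomorphic kernel vector once the rank of $V$ drops below $m$. The two delicate points are the transfer of the generic rank from the real hypersurface $\Sphere{2n-1}$ to $\C^n$, so that one may reason with holomorphic minors on a full-dimensional set, and the passage from a merely meromorphic kernel element to a genuinely holomorphic one. Both rest on $\Sphere{2n-1}$ being a uniqueness set for holomorphic functions and on $V$ having polynomial entries; the remaining implications are essentially formal once the central observation is in place.
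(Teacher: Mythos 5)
Your proof is correct and follows essentially the same route as the paper: the key identity $|Q|^2\, Y\cdot\bar H = (VY)\cdot\overline{H^d_n}$ on $\Sphere{2n-1}$ combined with the holomorphic nondegeneracy of $H^d_n$ gives (a) $\Leftrightarrow$ (b), and (b) $\Leftrightarrow$ (c) is a rank/injectivity argument. Your treatment of (b) $\Rightarrow$ (c) — transferring the generic rank from the sphere to $\C^n$ via holomorphic minors and producing a polynomial kernel vector over the rational function field — carefully supplies a detail that the paper's one-line justification leaves implicit.
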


\begin{proof}
For a map $H = \frac{P}{Q}$ the following equation holds on $\Sphere{2n-1}$:
\begin{align} 
\label{eq:V}
|Q|^2 X \cdot \bar H = Q X \cdot \bar P = Q V_HX \cdot \bar H^d_n = VX \cdot \bar H^d_n,
\end{align}
for any vector $X\in \C^m$. Let $Y: U \rightarrow \C^m$, where $U$ is a neighborhood of $\Sphere{2n-1}$, be a nontrivial holomorphic map such that $Y \cdot \bar H = 0$ on $\Sphere{2n-1}$. 
By \eqref{eq:V}, this is equivalent to $VY \cdot \bar H^d_n = 0$ on $\Sphere{2n-1}$. Since $H^d_n$ is holomorphically nondegenerate on $\Sphere{2n-1}$, the last equation is equivalent to $VY = 0$ on $\Sphere{2n-1}$. From this consideration the equivalence of (a) and (b) follows.

The equivalence of (b) and (c) holds, since (b) is equivalent to the fact that $V$ is injective on an open, dense subset of $\Sphere{2n-1}$, which is equivalent to (c). 
\end{proof}


The following proposition shows that finite nondegeneracy of a map $H$ is equivalent to a pointwise rank condition of $V_H$:

\begin{proposition}
\label{prop:equivFiniteNonDeg}
Let $H: \Sphere{2n-1} \rightarrow \Sphere{2m-1}$ be a rational map of degree $d$.
Then the following statements are equivalent:
\begin{itemize}
\item[(a)] $H$ is of degeneracy $s$ at $p \in \Sphere{2n-1}$.
\item[(b)] The kernel of the matrix $V$ is of dimension $s$ at $p \in \Sphere{2n-1}$.
\end{itemize}
In particular, the map $H$ is finitely nondegenerate at $p \in \Sphere{2n-1}$ if and only if the matrix $V$ is of rank $m$ at $p \in \Sphere{2n-1}$.
\end{proposition}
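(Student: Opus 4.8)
The plan is to run the pointwise analogue of the proof of \cref{prop:equivDeg}, identifying the kernel of $V$ at $p$ with a bilinear orthogonal complement of the degeneracy space. Since $\Sphere{2m-1}$ is cut out by $\rho'(z',\bar z') = \|z'\|^2 - 1$, one has $\rho'_{z'} = \bar z'$, so by \cref{def:finiteDeg} the degeneracy of $H$ at $p$ is $s = m - \dim_{\C} E'(p)$, where $E'(p) \coloneqq \Span_{\C}\{\bar L^\alpha \bar H|_{z=p}\}$ and $\bar L^\alpha$ ranges over all products of the CR vector fields of $\Sphere{2n-1}$. As the bilinear form $v\cdot w = \sum_j v_j w_j$ on $\C^m$ is nondegenerate, $\dim_{\C} E'(p)^{\perp} = s$, so it suffices to prove $\ker V(p) = E'(p)^{\perp}$. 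The final assertion is then the special case $s=0$: finite nondegeneracy means $E'(p) = \C^m$, equivalently $\ker V(p) = 0$, that is $\rank V(p) = m$.

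The first step is to differentiate the basic identity \eqref{eq:V}. Since $|Q|^2\bar H = Q\bar Q\bar H = Q\bar P$, its left-hand side equals $Q\,(X\cdot\bar P)$ identically, a product of the holomorphic scalar $Q$ and the antiholomorphic vector $\bar P$. Applying a product $\bar L^\alpha$ of CR vector fields to \eqref{eq:V} is legitimate because each $\bar L^\alpha$ is tangent to $\Sphere{2n-1}$ and hence preserves identities valid on the sphere; using that CR vector fields annihilate the holomorphic factors $Q$ and $V$, the Leibniz rule yields on $\Sphere{2n-1}$ the identity
\begin{align*}
Q\,(X\cdot\bar L^\alpha\bar P) = VX\cdot\bar L^\alpha\bar H^d_n, \qquad X\in\C^m,
\end{align*}
for every such $\alpha$. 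Moreover, since $\bar Q(p)\neq 0$ and $\bar H = \bar P/\bar Q$ with $\bar Q$ scalar, the Leibniz rule applied to $\bar P = \bar Q\bar H$ and to $\bar H = (1/\bar Q)\bar P$ shows that $\Span_{\C}\{\bar L^\alpha\bar P|_{z=p}\} = E'(p)$, so the degeneracy may equally be read off from $\bar P$.

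It remains to check the two inclusions for a constant vector $X_0\in\C^m$ after evaluating the displayed identity at $z=p$. If $V(p)X_0 = 0$, the right-hand side vanishes for all $\alpha$, and since $Q(p)\neq 0$ this forces $X_0\cdot\bar L^\alpha\bar P|_{z=p} = 0$ for all $\alpha$, i.e. $X_0\in E'(p)^{\perp}$; thus $\ker V(p)\subseteq E'(p)^{\perp}$. Conversely, if $X_0\in E'(p)^{\perp}$ then the left-hand side vanishes, so $V(p)X_0\cdot\bar L^\alpha\bar H^d_n|_{z=p} = 0$ for all $\alpha$; by \cref{lem:homMapNonDeg} the map $H^d_n$ is finitely nondegenerate at $p$, whence $\Span_{\C}\{\bar L^\alpha\bar H^d_n|_{z=p}\} = \C^{K(n,d)}$, so $V(p)X_0$ is bilinearly orthogonal to all of $\C^{K(n,d)}$ and therefore $V(p)X_0 = 0$. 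This gives $E'(p)^{\perp}\subseteq\ker V(p)$ and completes the identification.

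I expect the main difficulty to be bookkeeping rather than conceptual: one must justify carefully that applying the tangential operators $\bar L^\alpha$ to the on-sphere identity \eqref{eq:V} is valid and that they genuinely kill the holomorphic matrix $V$ and scalar $Q$, and one must keep the CR-frame conventions consistent between \cref{def:finiteDeg}, the displayed identity, and the nondegeneracy of $H^d_n$ in \cref{lem:homMapNonDeg} (which is stated with the conjugate fields acting on $H^d_n$, giving the conjugate span, of the same dimension). The only hypothesis used beyond \eqref{eq:V} and the nondegeneracy of $H^d_n$ is $Q(p)\neq 0$, which holds since $Q\neq 0$ on $\Sphere{2n-1}$.
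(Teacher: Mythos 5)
Your proof is correct and rests on the same two pillars as the paper's: the reflection identity \eqref{eq:V} differentiated by CR vector fields, and the finite nondegeneracy of $H^d_n$ from \cref{lem:homMapNonDeg}. The execution differs in two ways that make your version tidier. First, the paper differentiates $X\cdot\bar H = V_HX\cdot(\bar H^d_n/\bar Q)$ and must then show, by elementary row operations on the matrices $A^\gamma_p$, that $\rank A^\gamma_p(\bar H^d_n/\bar Q)=\rank A^\gamma_p(\bar H^d_n)$ before it can invoke the nondegeneracy of $H^d_n$; you instead differentiate the cleared form $Q\,(X\cdot\bar P)=VX\cdot\bar H^d_n$, so the homogeneous map appears undivided on the right and the only $\bar Q$-bookkeeping is the one-line Leibniz identification $\Span_{\C}\{\bar L^\alpha\bar P|_p\}=\Span_{\C}\{\bar L^\alpha\bar H|_p\}$ on the left. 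Second, the paper proves the equivalence of (a) and (b) through four separate kernel-dimension estimates, two of them by contradiction, whereas you establish the single subspace identity $\ker V(p)=E'(p)^{\perp}$ and read off both directions at once from $\dim_{\C} E'(p)^{\perp}=m-\dim_{\C} E'(p)$, valid because the bilinear form $v\cdot w$ is nondegenerate. Every ingredient you use is available in the paper and no step fails; this is the same proof, better organized.
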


\cref{thm:rankVNonDegIntro} follows from \cref{prop:equivDeg} and \cref{prop:equivFiniteNonDeg}. 

\begin{proof}
Since $X \cdot \bar H = V_H X \cdot \bar H_n^d/\bar Q$ on $\Sphere{2n-1}$ for any $X\in \C^m$ and $V$ is holomorphic, it follows that 
\begin{align*}
X \cdot \bar L^\beta \bar H = V_H X \cdot \bar L^\beta \left(\frac{\bar H^d_n}{\bar Q}\right),
\end{align*}
on $\Sphere{2n-1}$. For any sequence of multiindices $\alpha=(\alpha_1,\ldots, \alpha_\ell), \alpha_j \in \N^{n}$ and a sphere map $F:\Sphere{2n-1} \rightarrow \Sphere{2k-1}$ define the $\ell\times k$-matrix $A^\alpha_q(\bar F) \coloneqq \left(\bar L^{\alpha_j}\bar F|_q\right)_{1\leq j \leq \ell}$ for $q\in \Sphere{2n-1}$. Then it holds that
\begin{align}
\label{eq:kernelV}
A^\beta_p(\bar H) X = A^\beta_p\left(\frac{\bar H^d_n}{\bar Q}\right)V(p)X,
\end{align}
for any $p\in\Sphere{2n-1}$, any sequence of multiindices $\beta=(\beta_1,\ldots, \beta_r), \beta_j \in \N^n$ and $r\in \N$.

Observe that $\rank A^\gamma_p(\bar H^d_n/\bar Q) = \rank A^\gamma_p(\bar H^d_n)$, for multiindices $\gamma=(\gamma_1,\ldots,\gamma_{K(n,d)}), \gamma_j\in \N^n$, chosen according to the finite nondegeneracy of $H^d_n$ given in the proof of \cref{lem:homMapNonDeg}, which can be seen as follows: One has that
\begin{align*}
\bar L^{\gamma_j}(\bar H^d_n/\bar Q) = (\bar L^{\gamma_j} \bar H^d_n)/\bar Q + \sum_{|\varepsilon| < |\gamma_j|} c_\varepsilon^{\gamma_j} \bar L^\varepsilon \bar H^d_n,
\end{align*}
where $c_\varepsilon^{\gamma_j} \in \C$ involves some constants, derivatives of $\bar Q$ and terms of the form $\bar Q^{-m_{\gamma_j,\varepsilon}}$ for some $m_{\gamma_j,\varepsilon} \in \N$. Since the first row of $A^\gamma_p(\bar H^d_n /\bar Q)$ consists of $\bar H^d_n/\bar Q$, using elementary row operations one can see that the rank of $A^\gamma_p(\bar H^d_n/\bar Q)$ agrees with the rank of $A^\gamma_p (\bar H^d_n)$.

Now it is possible to prove the equivalence of (a) and (b). Assume (a), then there is $k_0\in \N$, such that $H$ is $(k_0,s)$-degenerate at $p$. This means that $\dim_{\C} E_{k_0}'(p) = N'-s$ and hence, for any $\beta=(\beta_1,\ldots, \beta_r), \beta_j\in\N^n$ and $r\in \N$, the kernel of the matrix $A^\beta_p(\bar H)$ is at least of dimension $s$. 

Consider $\gamma=(\gamma_1,\ldots,\gamma_{K(n,d)}), \gamma_j\in \N^n$ according to the finite nondegeneracy of $H^d_n$ given in the proof of \cref{lem:homMapNonDeg}. Let $X_j$ for $1\leq j \leq s$ be linearly independent vectors in the kernel of $A^\gamma_p(\bar H)$.
By taking $\beta=\gamma$ in \eqref{eq:kernelV}, it follows that $V(p) X_j \in \ker A^\gamma_p(\bar H^d_n/\bar Q)$. Since $H_n^d$ is finitely nondegenerate in $\Sphere{2n-1}$ and by the fact that $\rank A^\gamma_p(\bar H^d_n/\bar Q) = \rank A^\gamma_p(\bar H^d_n)$, it follows that $X_j \in \ker V(p)$. Hence $\dim \ker V(p) \geq s$. 

Assume that $\dim \ker V(p) = s'>s$, i.e. there are linearly independent vectors $Y_j \in \ker V$ for $1\leq j \leq s'$. Since $H$ is of degeneracy $s$ there exists a sequence of multiindices $\delta=(\delta_1,\ldots, \delta_q), \delta_j \in \N^n$ and $q\in \N$, such that the kernel of $A^\delta_p(\bar H)$ is precisely of dimension $s$. Then $A^\delta_p(\bar H^d_n/\bar Q) V(p) Y_j = 0$ and using \eqref{eq:kernelV} with $\beta=\delta$, it follows that $A^\delta_p(\bar H)Y_j = 0$, i.e. $Y_j \in \ker A^\delta_p(\bar H)$ for $1\leq j \leq s'$, which is a contradiction to $\dim \ker A^\delta_p(\bar H) = s$.

For the other direction, assume (b) and argue similarly: If $\dim \ker V(p) = s$, consider any sequence of multiindices $\epsilon=(\epsilon_1, \ldots, \epsilon_t)$ for $\epsilon_j \in \N^n$ and $t\in \N$. Let $X_j$ for $1\leq j \leq s$ be linearly independent vectors belonging to $\ker V(p)$. By \eqref{eq:kernelV} it follows that $X_j \in \ker A^\epsilon_p(\bar H)$. Thus, the degeneracy of $H$ is at least $s$. 

Assume the degeneracy of $H$ is equal to $s'>s$. Argue as in the proof of the sufficient direction to conclude that $\dim \ker V(p) \geq s$, which is a contradiction.

The last statement follows immediately from the above shown equivalence.
\end{proof}

\begin{example} 
\label{ex:groupInvMapNonDeg}
For each $\ell \geq 0$ the map $G^\ell$ is finitely nondegenerate at $p \in \Sphere{3}$: In this case the reflection matrix $V$ is the following $(2\ell+2) \times (\ell+2)$-matrix:
\begin{align*}
V = D_1 \left(\begin{array}{ccccccc}
1 & 0 & 0 &  0 &  & &   \\
 & \tilde H^1 &  0 &  0 &  &  &    \\
 & &  \tilde H^2  & 0 &    &  &    \\
 &  &   &   \tilde H^3 &   &   &    \\
 &  &   &   &  \ddots &    &    \\
& & &    &   &  & 1
\end{array}\right) D_2,
\end{align*}
where $\prescript{t}{}{\tilde H}^k = (z^k, {\binom{k}{1}} z^{k-1} w, \ldots, w^k) \in \C^{k+1}$, blank spaces are filled up with zeros, $D_1$ is the $(2\ell+2)\times (2\ell+2)$-diagonal matrix whose nonzero entries are the reciprocals of the coefficients of $H^{2\ell +2}$ and $D_2$ is the $(\ell+2) \times (\ell+2)$-diagonal matrix which consists of the coefficients of $G^\ell$ on the diagonal. It follows that $V$ is of full rank on $\Sphere{3}$. 
\end{example}

\begin{example}
The map $H(z,w)=(z^4,z^3 w,\sqrt{3} z w, w^3)$, which sends $\Sphere{3}$ into $\Sphere{7}$, is listed in \cite{DAngelo88b}. The reflection matrix is given by
\begin{align*}
V = \left(\begin{array}{cccc}
1 & 0 & 0 & 0  \\
0 & \frac{1}{2} &  \frac{\sqrt{3}}{2} z^2 & 0  \\
0 & 0 & \sqrt{2} z w &   0 \\
0 & 0 & \frac{\sqrt{3}}{2} w^2  & \frac{1}{2} z \\
0 & 0 & 0  & w    
\end{array}\right),
\end{align*}
which is of full rank if and only if $w\neq 0$ and if $w=0$ the kernel is of dimension $1$, hence by \cref{prop:equivFiniteNonDeg} the map is finitely nondegenerate for $w\neq 0$ and of degeneracy $1$ when $w=0$. A direct computation (as in \cref{def:finiteDeg}) shows that the map is $3$-nondegenerate at points $\{z\neq0,w\neq0\}\cap \Sphere{3}$ and $4$-nondegenerate when $\{z=0,|w| = 1\}$. If $w=0$ and $|z|=1$, the map is $(3,1)$-degenerate.
\end{example}

The following example gives a map, for which the set of points in $\Sphere{3}$ where the map is $2$-degenerate consists of one isolated point.

\begin{example}
The map $H(z,w)= \bigl((az-b zw)z,(a z - b zw) w, \bar b z + \bar a z w, w^2\bigr)$, for $a,b\in\C$ satisfying $|a|^2+|b|^2=1$, sends $\Sphere{3}$ into $\Sphere{7}$. The matrix $V$ is given by
\begin{align*}
V = D\left(\begin{array}{cccc}
\bar a z & 0 & b z^2 & 0  \\
\bar a w- \bar b & \bar a z &  a z + 2 b zw & 0   \\
0 & \bar a w -b & a w + b w^2 &   z \\
0 & 0 & 0  & w  
\end{array}\right),
\end{align*}
where $D$ is the $4\times 4$-diagonal matrix whose nonzero entries are the reciprocals of the  coefficients of $H^3$. Assuming $a=b=\frac{1}{\sqrt{2}}$, it holds that $V$ is of full rank if $Y_1 \coloneqq \{z\neq 0,w\neq 0\} \cap \Sphere{3}$ and the complex dimension of the kernel is $1$ if $Y_2\coloneqq \bigl\{\{z=0,w\neq 1\} \cup \{z\neq 0,w=0\} \bigr\}\cap \Sphere{3}$. The kernel of $V$ is of complex dimension $2$ at $p_0\coloneqq (0,1)\in \Sphere{3}$. It can be shown by a direct computation (as in \cref{def:finiteDeg}) that $H$ is $3$-nondegenerate in $Y_1$, $(2,1)$-degenerate in $Y_2$ and $(1,2)$-degenerate at $p_0$.
\end{example}

The following result gives conditions to guarantee that a sphere map is finitely degenerate:

\begin{corollary}
\label{cor:degreeKernel}
If a rational sphere map $H: \Sphere{2n-1} \rightarrow \Sphere{2m-1}$ of degree $d$ satisfies $K(n,d) < m$, then $H$ is finitely degenerate at any $p \in \Sphere{2n-1}$. In particular the map is holomorphically degenerate.
\end{corollary}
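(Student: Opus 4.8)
The plan is to observe that both assertions reduce immediately to a rank bound on the reflection matrix $V$ that is forced purely by its shape. By \cref{def:VH} the reflection map gives a matrix $V_H : \C^m \rightarrow \C^{K(n,d)}$ with holomorphic entries, so $V_H$, and hence $V = Q V_H$, is a $K(n,d) \times m$ matrix; multiplying by the scalar $Q$, which is nonzero on $\Sphere{2n-1}$, does not change its rank. Consequently, at every point $p \in \Sphere{2n-1}$ the image of $V(p)$ lies in $\C^{K(n,d)}$, whence $\rank V(p) \leq K(n,d) < m$. The entire argument rests on this single inequality.

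First I would deduce finite degeneracy. By \cref{prop:equivFiniteNonDeg} the degeneracy of $H$ at $p$ equals $\dim \ker V(p)$, and by rank--nullity
\begin{align*}
\dim \ker V(p) = m - \rank V(p) \geq m - K(n,d) > 0.
\end{align*}
Since this holds at every $p \in \Sphere{2n-1}$, the map $H$ fails to be finitely nondegenerate at each point, i.e.\ it is finitely degenerate everywhere.

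Next, for holomorphic degeneracy I would invoke \cref{prop:equivDeg}, according to which $H$ is holomorphically nondegenerate if and only if $V$ is generically of rank $m$ on $\Sphere{2n-1}$. Because $\rank V(p) \leq K(n,d) < m$ for \emph{every} $p$, the matrix $V$ attains rank $m$ nowhere, so in particular it is not generically of rank $m$; therefore $H$ is holomorphically degenerate.

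There is no real obstacle here, as the whole proof is a dimension count; the one point that must be stated with care is that the constant $K(n,d)$ appearing in the hypothesis is exactly the number of rows of $V$, so that $K(n,d) < m$ translates precisely into a rank deficiency of the $K(n,d)\times m$ matrix $V$. Once the shape of $V$ is pinned down via \cref{def:VH}, both conclusions follow at once from \cref{prop:equivFiniteNonDeg} and \cref{prop:equivDeg} respectively.
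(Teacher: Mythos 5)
Your proof is correct and follows essentially the same route as the paper: both arguments rest on the observation that $V$ is a $K(n,d)\times m$ matrix, so $\rank V \leq K(n,d) < m$ everywhere, and then invoke \cref{prop:equivFiniteNonDeg} and \cref{prop:equivDeg}. The only cosmetic difference is that you phrase the first step via rank--nullity while the paper argues by contradiction.
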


\begin{proof}
The $K(n,d)\times m$-matrix $V$ satisfies $\rank V \leq \min(K(n,d),m)=K(n,d)$ on $\Sphere{2n-1}$.
If $H$ would be finitely nondegenerate at $p \in \Sphere{2n-1}$, by \cref{prop:equivFiniteNonDeg}, $V$ would be injective at $p$, hence $\rank V = m$ at $p$, 
a contradiction. By \cref{prop:equivDeg} it follows that $H$ is holomorphically degenerate. 
\end{proof}

\begin{example}
The map $H(z,w) = (z, \cos(t)w, \sin(t)zw, \sin(t) w^2), t\in [0,2\pi)$, sends $\Sphere{3}$ to $\Sphere{7}$ and is holomorphically degenerate by \cref{cor:degreeKernel}. The reflection matrix $V$ is given as follows:
\begin{align*}
V= \left(
\begin{array}{cccc}
z & 0 & 0 & 0 \\
\frac{w}{\sqrt{2}} & \frac{\cos(t) z}{\sqrt{2}} & \frac{\sin(t)}{\sqrt{2}} & 0 \\
0 & \cos(t) w & 0 & \sin(t)
\end{array}
\right).
\end{align*}
If $\sin(t) \neq 0$, then $X(z,w)=(0,1,- \cot(t) z, -\cot(t) w)$ is a holomorphic vector field tangent to $\Sphere{7}$ along the image of $H$.
One can check that if $\cos(t),\sin(t)\neq 0$ the map is of degeneracy $1$ for $z\neq 0$ and of degeneracy $2$ if $z=0$. If $\cos(t) \neq 0$ and $\sin(t) = 0$ the map is of degeneracy $2$ and when $\cos(t)=0$ the map is $1$-degenerate.

\end{example}

The set of points where the map is finitely degenerate can be described by using \cref{prop:equivFiniteNonDeg}:

\begin{corollary}
\label{cor:complexVariety}
Let $H: \Sphere{2k-1} \rightarrow \Sphere{2m-1}$ be of generic degeneracy $s$ in $\Sphere{2k-1}$. The set of points in $\Sphere{2k-1}$, where $H$ is of degeneracy $s'>s$ is contained in a complex algebraic variety intersecting $\Sphere{2k-1}$.
\end{corollary}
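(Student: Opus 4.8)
The plan is to combine \cref{prop:equivFiniteNonDeg} with the observation that the entries of the reflection matrix $V$ are polynomials in $z=(z_1,\ldots,z_k)$. By \cref{prop:equivFiniteNonDeg} the degeneracy of $H$ at a point $p\in\Sphere{2k-1}$ equals $\dim_{\C}\ker V(p) = m - \rank V(p)$. Hence the generic degeneracy $s$ is exactly $m-r$, where $r\coloneqq \max_{p\in\Sphere{2k-1}}\rank V(p)$ is the maximal rank attained by $V$ on the sphere, and the set in question is
\begin{align*}
\Sigma &\coloneqq \{p\in\Sphere{2k-1} : H \text{ is of degeneracy } s'>s \text{ at } p\}\\
&= \{p\in\Sphere{2k-1} : \rank V(p) \leq r-1\}.
\end{align*}

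First I would record that, by the homogenization construction underlying \cref{def:VH} (and as is apparent in all the preceding examples), the entries of $V$ are in fact polynomials in $z$, so every $r\times r$ minor of $V$ is a polynomial. Let $\Delta_1,\ldots,\Delta_N \in \C[z_1,\ldots,z_k]$ denote these minors and set $\mathcal{V}\coloneqq\{z\in\C^k : \Delta_1(z)=\cdots=\Delta_N(z)=0\}$. Then $\mathcal{V}$ is a complex algebraic variety, and since $\rank V(p)\leq r-1$ holds precisely when all $r\times r$ minors of $V$ vanish at $p$, I obtain $\Sigma = \mathcal{V}\cap\Sphere{2k-1}$. In particular $\Sigma$ is contained in the complex algebraic variety $\mathcal{V}$, which is the desired conclusion; this upgrades the merely real-analytic description of the degenerate locus from \cref{prop:propNondeg}(c) to a complex algebraic one.

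It remains to see that $\mathcal{V}$ is a \emph{proper} subvariety meeting the sphere in the expected way. Since the rank $r$ is attained on the dense set of sphere points of minimal degeneracy, at any such point some $r\times r$ minor $\Delta_i$ is nonzero; hence $\Delta_i$ is a nonzero polynomial and $\mathcal{V}\subsetneq\C^k$. (One may note in passing that, because $\Sphere{2k-1}$ is a generic submanifold, a polynomial vanishing on it vanishes identically, as used in the proof of \cref{prop:propNondeg}(a); consequently the maximal rank of $V$ over all of $\C^k$ is also $r$, and $\mathcal{V}$ is exactly the rank-drop locus $\{\rank V < r\}$.) Finally, whenever $\Sigma$ is nonempty the variety $\mathcal{V}$ genuinely intersects $\Sphere{2k-1}$.

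The bookkeeping with ranks and kernels through \cref{prop:equivFiniteNonDeg} is routine, and the properness of $\mathcal{V}$ follows immediately from the attainment of the generic rank. The only point requiring care, and the place where the statement gains its strength over \cref{prop:propNondeg}(c), is the fact that the entries of $V$ are polynomial rather than merely holomorphic: this is precisely what makes the common zero locus of the minors a complex \emph{algebraic} variety rather than just an analytic one.
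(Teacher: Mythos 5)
Your proposal is correct and follows essentially the same route as the paper: identify the degeneracy at $p$ with $\dim_{\C}\ker V(p)$ via \cref{prop:equivFiniteNonDeg}, and then observe that the rank-drop locus of the polynomial matrix $V$ is the common zero set of its minors, hence a complex algebraic variety meeting the sphere. Your version is in fact stated a little more carefully than the paper's (which loosely describes the locus via ``minors of size strictly less than $\rank V$''), but there is no substantive difference.
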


\begin{proof}
The set $D$ of points where $H$ is of degeneracy $s'> s$ is the complement $Y$ of the set where $H$ is of generic degeneracy $s$, which is given by the union of the zero sets of any minor of $V$ of size strictly less than $\rank V$. 
Since $V$ consists of holomorphic polynomial entries, $Y$ is a complex algebraic variety and, by \cref{prop:equivFiniteNonDeg}, agrees with $D$. 
\end{proof}

Note that \cref{prop:equivFiniteNonDeg} shows that \cref{cor:degreeKernel} and \cref{cor:complexVariety} are equivalent to \cite{DAngelo03}*{Corollary 4.4} and \cite{DAngelo03}*{Corollary 4.2} respectively.

\section{The X-variety of a sphere map}
\label{sec:XVariety}

In this section sufficient and necessary conditions in terms of nondegeneracy conditions are provided to guarantee that the X-variety of a sphere map satisfies certain properties, such as agreeing with the graph of the map or being an affine bundle.

First, the general definition of the X-variety of a map is repeated for the reader's convenience, see \cite{Forstneric89} and \cite{DAngelo03}:

\begin{definition}
\label{def:XVariety}
Let $M\subset \C^N$ and $M'\subset \C^{N'}$ be real-analytic hypersurfaces and $H: M \rightarrow M'$ be a real-analytic CR map. Let $p\in M$ and $p' = H(p)$. Assume $M \cap U = \{Z \in U: \rho(Z,\bar Z) = 0\}$ and $M'\cap U' = \{Z' \in U': \rho'(Z',\bar Z') = 0\}$, where $U \subset\C^N$ and $U'\subset \C^{N'}$ are neighborhoods of $p$ and $p'$ and $\rho$ and $\rho'$ are real-analytic defining functions for $M$ and $M'$ defined in $U$ and $U'$ respectively. Define the following set
\begin{align*}
X_H \coloneqq \{(W,W') \in \C^{N+N'}: \rho'(H(Z),\bar W')=0 \text{ if } \rho(Z,\bar W) = 0\},
\end{align*}
which is called the \textit{X-variety of $H$} near $p \in M$.
\end{definition}

Since $H$ maps $M$ into $M'$ it follows that $(Z,H(Z)) \in X_H$, i.e. the graph of $H$ is contained in $X_H$. In \cite{DAngelo03}*{Theorem 4.1} it is shown in the case when $M \subset \C^n$ and $M'\subset \C^m$ are unit spheres that for any $z\neq 0$ it holds that $(z,z') \in X_H$ if and only if $z'-H(z) \in \ker V(z)$. 
$X_H$ has an \textit{exceptional fiber at $p\in \Sphere{2n-1}$} if the dimension of the fiber $\{p' \in \C^{N'}: (p,p')\in X_H\}$ exceeds its generic value. In \cite{DAngelo03}*{Corollary 4.2} it is argued that the set of points over which $X_H$ has an exceptional fiber agrees with the set of points $p\in \Sphere{2n-1}$ where the rank of $V(p)$ drops.

Moreover, in \cite{DAngelo03}*{Theorem 4.1} the following properties of $X_H$ are proved:
\begin{itemize}
\item[(a)] $X_H$ is an affine bundle over $\C^n \setminus\{0\}$ if and only if the rank of $V(z)$ is constant for each $z\neq 0$ in the domain of $H$.
\item[(b)] $X_H$ equals the graph of $H$ if and only if, for each $z\neq 0$ in the domain of $H$, the null space of $V(z)$ is trivial.
\end{itemize}

Using the facts from \cref{sec:NonDegReflex}, relating nondegeneracy conditions and rank conditions of the reflection matrix, the following characterizations hold:

\begin{theorem}
\label{thm:XVariety}
Let $H: \Sphere{2n-1} \rightarrow \Sphere{2m-1}$ be a real-analytic CR map. Then the following statements hold:
\begin{itemize}
\item[(a)] $X_H$ is an affine bundle over $\C^n\setminus\{0\}$ if and only if $H$ is of finite degeneracy $s$ at any point of $\Sphere{2n-1}$.
\item[(b)] $X_H$ equals the graph of $H$ if and only if $H$ is finitely nondegenerate at any point of $\Sphere{2n-1}$.
\item[(c)] $X_H$ has an exceptional fiber at $p\in \Sphere{2n-1}$ if and only if $H$ is not of generic degeneracy $s(H)$ at $p\in \Sphere{2n-1}$.
\end{itemize}
\end{theorem}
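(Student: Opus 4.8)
The plan is to treat \cref{thm:XVariety} as a translation: each of the three characterizations of $X_H$ recorded above from \cite{DAngelo03}*{Theorem 4.1} and \cite{DAngelo03}*{Corollary 4.2} is phrased in terms of the rank or the null space of the reflection matrix $V$, and \cref{prop:equivFiniteNonDeg} converts exactly these data into the degeneracy of $H$ through the pointwise identity $\dim_{\C}\ker V(p)=s(p)$, with $\rank V(p)=m-s(p)$. Thus I would fix the defining relation \eqref{eq:kernelV} and the already-established equivalence of \cref{prop:equivFiniteNonDeg} as the single tool and then read off each part.

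For part (b) I would start from the characterization that $X_H$ equals the graph of $H$ if and only if the null space of $V$ is trivial; by the last assertion of \cref{prop:equivFiniteNonDeg} triviality of $\ker V(p)$ is precisely finite nondegeneracy of $H$ at $p$, so the two conditions coincide point by point on $\Sphere{2n-1}$. For part (a) I would argue identically from the characterization that $X_H$ is an affine bundle if and only if $\rank V$ is constant: since $\dim_{\C}\ker V(p)=s(p)$, constancy of the rank is the same as $p\mapsto s(p)$ being a constant finite function, which is exactly the assertion that $H$ is of one and the same finite degeneracy $s$ at every point of $\Sphere{2n-1}$.

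For part (c) I would use the description that $X_H$ has an exceptional fiber at $p$ exactly where $\rank V(p)$ falls below its generic value. By \cref{prop:equivFiniteNonDeg} the generic value of $\rank V$ equals $m-s(H)$, where $s(H)=\min_{p}s(p)$ is the generic degeneracy; hence the rank drops at $p$ if and only if $s(p)>s(H)$, i.e. if and only if $H$ fails to be of generic degeneracy $s(H)$ at $p$. Here I would additionally invoke \cref{cor:complexVariety} to identify the exceptional locus with the zero set of the appropriate minors of $V$ on the sphere, so that the set of exceptional fibers matches the set $\{s(p)>s(H)\}$ exactly rather than up to a lower-dimensional correction.

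The main point requiring care is the alignment of the domains: D'Angelo's rank characterizations are formulated for $z\neq 0$ in the domain of $H$, while \cref{thm:XVariety} is formulated at points of $\Sphere{2n-1}$, and in general the rank of $V$ need not be constant along complex rays (as the examples with inhomogeneous components of $H$ show). I would therefore handle this by reading the rank and null-space conditions as conditions on the fibres of $X_H$ over the domain of $H$, to which the pointwise statement \cref{prop:equivFiniteNonDeg} applies directly, and by checking in each direction that the degeneracy data on $\Sphere{2n-1}$ already determines the fibre structure entering $X_H$. Once this matching is made precise, each of (a), (b) and (c) follows immediately from the quoted rank characterizations combined with \cref{prop:equivFiniteNonDeg}.
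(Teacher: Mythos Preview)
Your approach is essentially the same as the paper's: combine the rank characterizations of $X_H$ from \cite{DAngelo03}*{Theorem 4.1, Corollary 4.2} with \cref{prop:equivFiniteNonDeg} to translate rank/kernel data of $V$ into degeneracy data of $H$. The paper does exactly this for (a), (b) and (c). Your invocation of \cref{cor:complexVariety} in (c) is unnecessary; the paper simply observes that the points of generic degeneracy $s(H)$ form an open dense subset, so the complement is precisely where $s(p)>s(H)$, hence where $\rank V$ drops.

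The one place where your write-up is weaker than the paper's is the domain-alignment step. You identify the issue correctly (D'Angelo's characterizations are for $z\neq 0$ in the domain of $H$, while \cref{prop:equivFiniteNonDeg} lives on $\Sphere{2n-1}$), but your resolution (``reading the rank and null-space conditions as conditions on the fibres \ldots and checking in each direction \ldots'') is vague. The paper handles this concretely in one sentence: since the rank conditions in (a) and (b) are \emph{constant} on $\Sphere{2n-1}$ and $V$ has polynomial entries, they persist on a full neighborhood of $\Sphere{2n-1}$, to which $H$ extends. That single observation is what you should supply in place of the last paragraph.
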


\begin{proof}
The proofs of (a) and (b) follow from \cref{prop:equivFiniteNonDeg} and the characterizations from \cite{DAngelo03}*{Theorem 4.1} stated above. Since the rank conditions involved are constant on $\Sphere{2n-1}$, they also hold in a neighborhood of $\Sphere{2n-1}$, to which $H$ extends.
For (c) note that the points where $H$ is of generic degeneracy $s(H)$ (see the remark after \cref{def:finiteDeg}) form an open dense subset $S$ of $\Sphere{2n-1}$. Hence in the complement of $S$ the degeneracy of $H$ is strictly bigger and by \cref{prop:equivFiniteNonDeg} the rank of the reflection matrix is strictly smaller. Thus, the complement of $S$ is precisely the set where $X_H$ possesses an exceptional fiber.
\end{proof}

\section{Infinitesimal deformations of sphere maps}

In this section infinitesimal deformations of rational sphere maps are studied. It turns out that similarly as in the case of sphere maps, where each sphere map is related to the homogeneous sphere map by tensoring, infinitesimal deformations of a sphere map are related to infinitesimal deformations of the homogeneous sphere map by the reflection matrix. 

\begin{lemma}
\label{lem:infDefRational}
Let $H = \frac P Q: U \rightarrow \Sphere{2m-1}$ be a holomorphically nondegenerate rational sphere map of degree $d$, where $U$ is a neighborhood of $\Sphere{2k-1}$. Then each $X \in \hol(H)$ is of the form $X = \frac{X'}{Q}$, where $X'$ is a holomorphic polynomial of degree at most $2d$ satisfying $\re(X' \cdot \bar P) = 0$ on $\Sphere{2k-1}$.
\end{lemma}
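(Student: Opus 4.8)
The plan is to set $X':=QX$ and to prove that $X'$ is a holomorphic polynomial of degree at most $2d$; the real-part condition will then come for free, since multiplying $\re(X\cdot\bar H)=0$ by the positive real factor $|Q|^2=Q\bar Q$ gives $\re(X'\cdot\bar P)=\re\bigl(Q(X\cdot\bar P)\bigr)=0$ on $\Sphere{2k-1}$, and conversely. First I would record that, as $k\geq 2$, a holomorphic vector defined near $\Sphere{2k-1}$ extends holomorphically across the sphere into the ball by Hartogs' phenomenon, so $X$, hence $X'=QX$, is holomorphic on a ball of radius $>1$ and has a convergent Taylor expansion $X'=\sum_\alpha c_\alpha z^\alpha$ about the origin. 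The entire content is then to show $c_\alpha=0$ for $|\alpha|>2d$.

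Next I would polarize the sphere identities, replacing $\bar z$ by an independent variable $\zeta$ and dividing by $z\cdot\zeta-1$, which generates the ideal of the smooth hypersurface $\{z\cdot\zeta=1\}$. The deformation condition becomes $X'(z)\cdot\bar P(\zeta)+\bar X'(\zeta)\cdot P(z)\equiv 0\pmod{z\cdot\zeta-1}$, while the defining relation of the reflection matrix (\cref{def:VH}) polarizes to the exact polynomial identity $\prescript{t}{}{V_H}(z)\bar H^d_k(\zeta)=\widehat P(z,\zeta)$, where $\widehat P(z,\zeta):=\sum_j\bar P^j(\zeta)(z\cdot\zeta)^{d-j}$ is the homogenization of $\bar P$ and satisfies $\widehat P\equiv\bar P\pmod{z\cdot\zeta-1}$. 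Combining these, and using $V_HX'=QV_HX=VX$, one finds that $W':=V_HX'$ satisfies $\re(W'\cdot\bar H^d_k)=0$ on $\Sphere{2k-1}$; that is, $W'=VX\in\hol(H^d_k)$ is an infinitesimal deformation of the homogeneous sphere map.

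I would then reduce to $H^d_k$. Since the components of $\bar H^d_k$ run over a full basis of the degree-$d$ monomials in $\zeta$, the monomial matching in the polarized condition for $H^d_k$ closes, and one shows directly that every element of $\hol(H^d_k)$ is a polynomial of degree at most $2d$; in particular $W'=V_HX'$ is such a polynomial. Holomorphic nondegeneracy of $H$ now enters through \cref{prop:equivDeg}: it guarantees that $V$, equivalently $V_H$, is generically of rank $m$ on $\Sphere{2k-1}$, so that $X'\mapsto V_HX'$ is injective on holomorphic maps and $X'$ is pinned down by $W'$. Multiplying the congruences above by suitable powers of $z\cdot\zeta$ converts them into genuine polynomial identities, using that a polynomial homogeneous of fixed degree in $\zeta$ and divisible by $z\cdot\zeta-1$ must vanish; matching $\zeta$-coefficients then expresses the components of $X'$ through those of $W'$ and of the homogenized reflection data, forcing the tail $\sum_{|\alpha|>2d}c_\alpha z^\alpha$ to vanish and exhibiting the denominator $Q$.

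The hard part will be this final descent from $W'=V_HX'$ back to $X'$ itself. Generic injectivity of $V_H$ only yields, a priori, that $X'$ is rational; upgrading this to a genuine polynomial of degree at most $2d$, and pinning the denominator to exactly $Q$ rather than to some $m\times m$ minor of $V$, requires feeding the deformation equation back in and not merely the membership $W'\in\hol(H^d_k)$, together with a careful control of the homogeneous components degree by degree. This sharp bookkeeping, alongside the direct computation that $\hol(H^d_k)$ consists of polynomials of degree at most $2d$, is where the real work lies.
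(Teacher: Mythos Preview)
Your setup is right and matches the paper: set $X'\coloneqq QX$, note the real-part condition is then immediate, and use Hartogs to get a convergent Taylor expansion $X'=\sum_{\ell\geq 0}X'^\ell$ (the paper takes this expansion for granted, so your explicit mention of Hartogs is a small improvement). After that, however, you take a long detour through the reflection matrix that the paper avoids entirely.

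The paper argues directly: plug the homogeneous expansions of $X'$ and $P$ into $\re(X'\cdot\bar P)=0$ on $\Sphere{2k-1}$, substitute $Z\mapsto e^{it}Z$, and read off the Fourier coefficient of $e^{i(d+\ell_0)t}$ for each $\ell_0\geq 1$. Since the conjugate sum $\sum\bar X'^r\cdot P^s$ cannot contribute a mode of frequency $>d$, one gets
\[
\sum_{j=0}^d X'^{\,d+\ell_0+j}\cdot\bar P^{\,j}=0 \quad\text{on }\Sphere{2k-1},\qquad \ell_0\geq 1,
\]
and holomorphic nondegeneracy of $H$ then forces $X'^\ell\equiv 0$ for $\ell\geq 2d+1$. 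No reflection matrix, no reduction to $H^d_k$, no polarization is used.

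Your route, by contrast, passes through $W'=V_HX'\in\hol(H^d_k)$, establishes $\deg W'\leq 2d$ by the same Fourier argument applied to $H^d_k$, and then tries to descend back to $X'$. You correctly identify that generic injectivity of $V_H$ only yields $X'$ rational, not polynomial of degree $\leq 2d$: the left inverse of $V_H$ is rational with denominator an $m\times m$ minor of $V_H$, and nothing prevents this denominator from being nontrivial. Your proposed fix is to ``feed the deformation equation back in'' with ``careful control of the homogeneous components degree by degree.'' But that is precisely the direct Fourier argument the paper runs on $H$ itself. Once you carry it out, the detour through $V_H$ and $H^d_k$ contributes nothing: you end up proving the special case $H=H^d_k$ by the direct method and then re-proving the general case by the same direct method to close your gap. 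The reflection-matrix machinery, while central elsewhere in the paper, is unnecessary overhead for this lemma.
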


\begin{proof}
Let $H$ be given as in the assumption of the lemma, where $P=(P_1, \ldots, P_m)$ and $Q: U \rightarrow \C$ with $Q \neq 0$ on $U$, a neighborhood of $\Sphere{2k-1}$. Then $|Q|^2\re(X \cdot \bar H) = \re(Q X \cdot \bar P)$. Set $X' \coloneqq Q X$. Considering homogeneous expansions of $X' = \sum_{\ell\geq 0} {X'}^\ell$ and $P = \sum_{j=0}^d P^j$ one obtains the following equation:
\begin{align*}
\sum_{\ell \geq 0} \sum_{j=0}^d {X'}^\ell \cdot {\bar P}^j + \sum_{r\geq 0} \sum_{s=0}^d \bar  X'^r \cdot P^s = 0.
\end{align*}
After setting $Z\mapsto Z e^{i t}$ for $t\in \R$ collect the Fourier coefficient of degree $d+\ell_0$ for $\ell_0 \geq 1$ to get: 
\begin{align*}
\sum_{j=0}^d {X'}^{d+\ell_0+j} \cdot {\bar P}^j = 0.
\end{align*}
By the holomorphic nondegeneracy of $H$ this implies that $X'^{\ell} \equiv 0$ for $\ell \geq 2 d+1$, i.e. $\deg X' \leq 2 d$.
\end{proof}

Denote by $\mathcal P^d(k,m)$ the space of complex polynomial maps from $\C^k$ to $\C^m$ of degree $d$ with $\dim_{\R} \mathcal P^d(k,m) = 2 m \sum_{\ell=0}^d\binom{\ell + k -1}{\ell}$. The following definition is justified by the previous \cref{lem:infDefRational} and in fact $\hol(H) $ can be identified with a space of polynomial maps.

\begin{definition}
Let $H = \frac P Q: U \rightarrow \Sphere{2m-1}$ be a holomorphically nondegenerate rational sphere map of degree $d$, where $U$ is a neighborhood of $\Sphere{2k-1}$. Define $\dim \hol(H) \coloneqq \dim_{\R} \{X' \in \mathcal P^{2d}(k,m): \frac{X'}{Q} \in \hol(H)\}$.
\end{definition}

In \cite{DAngelo91} and \cite{DAngeloBook}*{section 5.1.4, Theorem 4} it is shown that for any polynomial sphere map of degree $d$ if one applies finitely many tensoring operations to it one obtains the homogeneous sphere map of degree $d$. Moreover in \cite{DAngeloBook}*{section 5.1.4, Theorem 3} it is shown that the homogeneous sphere map is up to a unitary transformation unique among all polynomial and homogeneous sphere maps. The following theorem gives the corresponding results in terms of infinitesimal deformations.

\begin{theorem}
\label{thm:polyMapInfDef}
Let $H: \Sphere{2n-1} \rightarrow \Sphere{2m-1}$ be a holomorphically nondegenerate rational map of degree $d$, then $\dim \hol(H) \leq \dim \hol(H^d_n)$. \\
If $H: \Sphere{2n-1} \rightarrow \Sphere{2m-1}$ is a polynomial map of degree $d$, it holds that $\dim \hol(H) = \dim \hol(H^d_n)$ if and only if $H$ is unitarily equivalent to $H^d_n$.
\end{theorem}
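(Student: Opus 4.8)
The plan is to use the reflection matrix to construct a single $\R$-linear map $\Phi\colon \hol(H)\to\hol(H^d_n)$ that is always injective and becomes bijective precisely in the extremal case, and then to extract homogeneity and unitarity of $H$ purely from the lowest-order (constant) behaviour of $\Phi$. First I would set $\Phi(X)\coloneqq VX$. To see it is well defined, note that by \eqref{eq:V} one has $VX\cdot\bar H^d_n=|Q|^2\,X\cdot\bar H$ on $\Sphere{2n-1}$, so that $\re(VX\cdot\bar H^d_n)=|Q|^2\re(X\cdot\bar H)=0$ there, $|Q|^2$ being real and positive; since $VX$ is holomorphic this says $VX\in\hol(H^d_n)$, and by \cref{lem:infDefRational} applied to $H^d_n$ it is automatically a polynomial of degree at most $2d$, hence lands in the space used to define $\dim\hol(H^d_n)$. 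Injectivity is immediate from holomorphic nondegeneracy: if $VX=0$ on $\Sphere{2n-1}$ then \cref{prop:equivDeg} forces $X\equiv 0$. This already gives $\dim\hol(H)\le\dim\hol(H^d_n)$.

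For the equivalence I would treat the easy direction first: if $H$ is unitarily equivalent to $H^d_n$, then pre- and post-composition with the unitaries induces a degree-preserving real-linear bijection $\hol(H^d_n)\to\hol(H)$, so the dimensions agree. The content is the converse, where $H$ is polynomial (so $Q=1$ and $V=V_H$) and holomorphically nondegenerate, as required by the definition of $\dim\hol(H)$. Since both dimensions are finite, equality makes the injective map $\Phi$ surjective, and the plan is to use surjectivity only in the lowest holomorphic order. Expanding $H=\sum_{k\le d}H^k$ into homogeneous pieces, the defining identity in \cref{def:VH} reads $VX\cdot\bar H^d_n=\sum_{k=0}^d (X\cdot\bar H^k)\,\|z\|^{2(d-k)}$, whose bidegree-$(0,d)$ part is exactly $X^0\cdot\bar H^d$, with $X^0=X(0)$ and $H^d$ the top-degree part. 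A mode-by-mode (Fourier, $z\mapsto e^{i\theta}z$) analysis of the condition $\re(W\cdot\bar H^d_n)=0$ shows that the bidegree-$(0,d)$ component of $W\cdot\bar H^d_n$ is a free parameter ranging over all of $\bar{\mathcal H}(n,d)$ as $W$ ranges over $\hol(H^d_n)$. Surjectivity of $\Phi$ therefore forces $\{\bar H^d_1,\dots,\bar H^d_m\}$ to span $\bar{\mathcal H}(n,d)$, so $m\ge K(n,d)$; since holomorphic nondegeneracy makes the $K(n,d)\times m$ matrix $V$ of generic rank $m$, we also get $m\le K(n,d)$, hence $m=K(n,d)$ and the components of $H^d$ form a basis of $\mathcal H(n,d)$.

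It then remains to upgrade this to $H=UH^d_n$ with $U$ unitary. First I would show $H=H^d$ is homogeneous: expanding $\|H\|^2=1$ on $\Sphere{2n-1}$ into bihomogeneous pieces and using that a bihomogeneous polynomial vanishing on the sphere vanishes identically, an induction over the Fourier modes of $\|H\|^2$ yields $H^d\cdot\bar H^k\equiv 0$ for each $k<d$, and the linear independence of the $H^d_j$ then forces $H^k=0$. Writing the resulting homogeneous $H$ as $H=UH^d_n$ for an invertible $U$ (both being bases of $\mathcal H(n,d)$) and comparing the two sphere maps via $\|H\|^2=\|z\|^{2d}=\|H^d_n\|^2$ gives $\langle(U^{*}U-I)H^d_n,H^d_n\rangle\equiv 0$; the linear independence of the monomials $z^{\alpha}\bar z^{\beta}$ of bidegree $(d,d)$ then yields $U^{*}U=I$, so $H$ is unitarily equivalent to $H^d_n$. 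The main obstacle I anticipate is the Fourier/mode analysis underlying the claim that the bidegree-$(0,d)$ coordinate of $W\cdot\bar H^d_n$ is genuinely free for $H^d_n$: once that single low-order coordinate is pinned down, spanning, the equality $m=K(n,d)$, homogeneity, and unitarity all follow essentially by bookkeeping, so the delicate point is isolating and justifying exactly that coordinate rather than the full (and more cumbersome) description of $\hol(H^d_n)$.
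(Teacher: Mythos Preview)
Your argument is correct. The inequality is handled exactly as in the paper: $\Phi(X)=VX$ is well defined by \eqref{eq:V}, lands in $\hol(H^d_n)$, and is injective by \cref{prop:equivDeg}. The point you flag as the ``main obstacle'' is in fact immediate: the trivial deformation $W=c-(\bar c\cdot H^d_n)H^d_n\in\aut(H^d_n)$ from \cref{sec:generalInfAuto} has $W(0)=c$ arbitrary in $\C^{K(n,d)}$, so the bidegree-$(0,d)$ part $c\cdot\bar H^d_n$ already sweeps out all of $\bar{\mathcal H}(n,d)$ without any Fourier analysis.

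For the equality part your route genuinely differs from the paper's. The paper inverts $\Phi$ on $\hol(H^d_n)$, evaluates at the single element $Y=iH^d_n$, obtains $1=V^{-1}H^d_n\cdot\bar P$ on a dense open set, and then Fourier-expands this identity: the constant mode together with holomorphic nondegeneracy of $H^d_n$ gives $H^d_n={}^t\bar B_0 P_d$, from which unitarity of the top piece is extracted ``by some linear algebra''; lower-order terms are then killed by a second Fourier argument on $\|P\|^2=1$. You instead read surjectivity off at the lowest bidegree to force $\{H^d_j\}$ to span $\mathcal H(n,d)$, combine this with the generic-rank bound $m\le K(n,d)$ to get $m=K(n,d)$, and only then run the Fourier argument on $\|H\|^2=1$, using linear independence of the $H^d_j$ (rather than nondegeneracy of $H^d_n$) to kill $H^k$ for $k<d$. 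Your approach has the advantage of making $m=K(n,d)$ explicit up front, which clarifies the subsequent linear algebra (the paper's pointwise use of ``$B=V^{-1}$'' tacitly needs the matrix $V$ to be square); the paper's approach, on the other hand, gets unitarity of the top-degree part in one stroke from a single well-chosen $Y$, without the separate spanning/rank step.
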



\begin{proof}
Let $H = \frac P Q: \Sphere{2n-1} \rightarrow \Sphere{2m-1}$ be a rational map with $Q\neq 0$ on $\Sphere{2n-1}$. 
Consider as in \cref{def:VH} the matrix $V: \C^m \rightarrow \C^{K(n,d)}$ whose entries are holomorphic polynomials in $z\in \C^n$. Then it holds on $\Sphere{2n-1}$ that $|Q|^2 X \cdot \bar H =  VX \cdot \bar H^d_n$ for $X\in \C^m$ as in \eqref{eq:V}. Thus on $\Sphere{2n-1}$ one obtains,
\begin{align}
\label{eq:holHFromHolHd}
X \in \hol(H) \Leftrightarrow VX \in \hol(H^d_n).
\end{align}
By \cref{lem:infDefRational} there are polynomials $X_1', \ldots, X_k' \in \mathcal P^{2d}(n,m)$ such that $\{X_j = \frac{X'_j}{Q}:1 \leq j \leq k\}$ is a basis of $\hol(H)$. 
From \cref{prop:equivDeg} it follows that $\{VX_j: 1\leq j \leq k\}$ is a set of linearly independent polynomials in $\hol(H^d_n)$ which implies $k \leq \dim\hol(H^d_n)$. 

To show the nontrivial implication of the second claim, assume that $H=P$ is polynomial of degree $d$ and $\dim \hol(P) = \dim \hol(H^d_n)$. 
By \cref{prop:equivDeg}, since the reflection matrix $V$ is injective on a dense, open subset $S$ of $\Sphere{2n-1}$ as a map from $\hol(P)$ 
to $\hol(H^d_n)$, 
it follows by the rank theorem that $\dim V(\hol(P))= \dim \hol(P)$. 
Using the assumption $\dim \hol(P) = \dim \hol(H^d_n)$ this implies that $V$ is invertible as a map from $\hol(P)$ to $\hol(H^d_n)$, for $z\in S$. 
Thus, for any $Y\in \hol(H^d_n)$ there exists $X\in \hol(P)$ with $VX = Y$, such that on $S$, the following equation holds:
\begin{align*}
Y \cdot \bar H^d_n = VX \cdot \bar H^d_n = X \cdot \bar P = V^{-1}Y \cdot \bar P  = Y \cdot \prescript{t}{}{V^{-1}} \bar P.
\end{align*}
Choosing $Y = i H^d_n \in \aut(H^d_n) \subset \hol(H^d_n)$ in the previous equation, it becomes after using $H^d_n \cdot \bar H^d_n = 1$ on $\Sphere{2n-1}$:
\begin{align}
\label{eq:VHP}
1 = H^d_n \cdot \prescript{t}{}{V^{-1}} \bar P = V^{-1}H^d_n \cdot \bar P.
\end{align}
Note that the matrix $B \coloneqq V^{-1}$ depends holomorphically on $z \in U$, where $U$ is an open set in $\C^n$, such that $U \cap \Sphere{2n-1} = S$. 
Consider the homogeneous expansion of $B = \sum_{k\geq 0}B_k$ and $P=\sum_{\ell=0}^d P_\ell$. Take $z \mapsto e^{i t} z$ in \eqref{eq:VHP}, and collect Fourier coefficients. Looking at  the constant Fourier coefficient we see that 
\begin{align*}
1 = B_0 H^d_n \cdot \bar P_d,
\end{align*}
on $S$. This equation can be rewritten as in the proof of \cite{DAngeloBook}*{section 5.1.4, Theorem 3} as follows,
\begin{align*}
0 = H^d_n\cdot \bar H^d_n - B_0 H^d_n \cdot \bar P_d = H^d_n \cdot (\bar H^d_n - \prescript{t}{}{B_0} \bar P_d),
\end{align*}
such that the holomorphic nondegeneracy of $H^d_n$ implies that $H^d_n = \prescript{t}{}{\bar B_0} P_d$.
By some linear algebra 
this shows that $P_d = U H^d_n$, where $U$ is a unitary matrix. 
Write $P = U H^d_n + F$, where $F$ is a holomorphic polynomial of degree $d-1$.
Since $P$ maps $\Sphere{2n-1}$ to $\Sphere{2K(n,d)-1}$ it holds on $\Sphere{2n-1}$ that,
\begin{align*}
1 = \|P\|^2 = H^d_n \cdot \bar H^d_n + U H^d_n \cdot \bar F + \bar U \bar H^d_n \cdot F + F \cdot \bar F,
\end{align*}
hence after setting $F' = \prescript{t}{}{\bar U} F$, one obtains
\begin{align}
\label{eq:mapEqHomPlus}
0 = H^d_n \cdot \bar F' + \bar H^d_n \cdot F' + F' \cdot \bar F'.
\end{align}
Consider $z\mapsto e^{i t} z$ and a homogeneous expansion of $F'=\sum_{j=0}^{d-1} F_j'$ and collect the coefficient of $e^{i d t}$ to get that $H^d_n \cdot \bar F'_0 = 0$, hence by the holomorphic nondegeneracy of $H^d_n$ one obtains $F_0' = 0$. Proceed inductively to show that $F_k' = 0$ for $k \leq d-1$. Assume that $F'_\ell = 0$ for all $0\leq \ell \leq k-1$. Collect the coefficient of $e^{i(d-k)t}$ in \eqref{eq:mapEqHomPlus} to obtain that,
\begin{align*}
0 = H^d_n \cdot \bar F'_k + \sum_{m=0}^{k-1} F'_{d-k+m} \cdot \bar F'_m,
\end{align*}
which, by using the induction hypothesis and the holomorphic nondegeneracy of $H^d_n$, implies that $F'_{k} = 0$. In total one obtains that $P$ is unitarily equivalent to $H^d_n$ in $S$, hence they are equivalent everywhere on $\Sphere{2n-1}$. 
\end{proof}

One has the following inequality for the dimension of the space of infinitesimal deformations, when the tensor product is involved.

\begin{corollary}
\label{cor:mono}
Let $A \subseteq \C^m$ be a complex subspace, $H: \Sphere{2n-1} \rightarrow \Sphere{2m-1}$ and $G: \Sphere{2n-1} \rightarrow \Sphere{2\ell-1}$ be non-constant real-analytic CR maps. Assume $F = E_{(A,G)}H$. Then $\dim \hol(H) \leq \dim \hol(F)$ and if $H$ is holomorphically nondegenerate and equality holds if and only if $F = H$. 
\end{corollary}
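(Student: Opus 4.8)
The plan is to view the tensor operation on infinitesimal deformations, $X\mapsto T_{(A,G)}X$, as a linear map $\hol(H)\to\hol(F)$ (well defined by \cref{lem:tensorInfDef}) and to read off both assertions from its injectivity together with the precise failure of its surjectivity.

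First I would establish injectivity: if $T_{(A,G)}X=(X_A\otimes G)\oplus X_{A^\perp}=0$, then $X_{A^\perp}=0$, and since $G$ is a non-constant sphere map (so $G\not\equiv0$) the vanishing $X_A\otimes G=0$ forces $X_A=0$; hence $X=0$. Injectivity gives $\dim\hol(H)\le\dim\hol(F)$, where if $H$ is holomorphically degenerate both sides are infinite by \cref{prop:propNondeg}(a). The forward direction of the equivalence is immediate, since $F=H$ gives $\hol(F)=\hol(H)$. I would also record the elementary bookkeeping that, for non-constant $G$, one has $F=H$ precisely when $A=\{0\}$: the target of $F$ has complex dimension $m+\dim_{\C}A\,(\ell-1)$, which equals $m$ only if $\dim_{\C}A=0$ or $\ell=1$, the latter forcing $G$ to map into $\Sphere{1}$ and hence (by the maximum principle) to be constant.

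For the converse I assume $H$ holomorphically nondegenerate, so $\dim\hol(H)<\infty$ by \cref{lem:infDefRational}, and $F\neq H$, i.e. $A\neq\{0\}$. The key is to exhibit an element of $\hol(F)$ outside $\image(T_{(A,G)})$. Since $H$ is holomorphically nondegenerate, $H_A\not\equiv0$ (otherwise the constant vectors in $A$ would give a holomorphic degeneracy of $H$). Picking any $v\in\C^{\ell}\setminus\{0\}$, I would set $W\coloneqq v-(\bar v\cdot G)G$; using $\|G\|^2=1$ on $\Sphere{2n-1}$ one checks $\re(W\cdot\bar G)=0$, so $W\in\hol(G)$ (this is just the translation part $T_1$ of $\aut(G)$). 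Then $Y\coloneqq(H_A\otimes W)\oplus0$ lies in $\hol(F)$, because $\re(Y\cdot\bar F)=\re\bigl((H_A\cdot\bar H_A)(W\cdot\bar G)\bigr)=\|H_A\|^2\re(W\cdot\bar G)=0$.

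The main obstacle will be showing $Y\notin\image(T_{(A,G)})$. Here I would use that $X_A\otimes G$ and $H_A\otimes W$ are pointwise rank-one tensors, so an identity $X_A\otimes G=H_A\otimes W$ on a dense open subset of $\Sphere{2n-1}$ forces $W=\mu G$ for a scalar function $\mu$; but $W=\mu G$ rearranges to $v=(\mu+\bar v\cdot G)G$, exhibiting the non-constant $G$ as a scalar multiple of the fixed direction $v$, which (again via $\|G\|^2=1$ and constancy of unimodular holomorphic functions on $\Sphere{2n-1}$ for $n\ge2$) is impossible. Consequently $Y$ is linearly independent of $\image(T_{(A,G)})$, so $\dim\hol(F)\ge\dim\hol(H)+1>\dim\hol(H)$; the contrapositive gives that equality forces $F=H$. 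I expect the rank-one/non-constancy step, together with justifying $H_A\not\equiv0$ and the target-dimension identification $F=H\Leftrightarrow A=\{0\}$, to be where the real work lies.
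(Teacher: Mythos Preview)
Your argument is correct and takes a genuinely different route from the paper for the equality statement.

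For the inequality, both you and the paper use that $T_{(A,G)}:\hol(H)\to\hol(F)$ is injective; your injectivity argument is the more direct one (using only $G\not\equiv 0$), whereas the paper passes through the identity $X\cdot\bar H=\widetilde V(X)\cdot\bar F$ and then invokes holomorphic nondegeneracy of $H$ to kill $X$ once $\widetilde V(X)=0$. Your observation that the degenerate case is handled by \cref{prop:propNondeg}(a) (together with \cref{lem:tensorHolDeg}, which you should cite to see that $F$ is then degenerate as well) is fine; in fact the bare injectivity already yields $\dim\hol(H)\le\dim\hol(F)$ without a case split.

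For the equality case the approaches diverge. The paper argues abstractly: if $\dim\hol(H)=\dim\hol(F)<\infty$, then $\widetilde V$ is a linear isomorphism $\hol(H)\to\hol(F)$, and one then reruns the algebraic machinery from the proof of \cref{thm:polyMapInfDef} (the relation $Y\cdot\bar F=\widetilde V^{-1}(Y)\cdot\bar H$, a suitable choice of $Y$, and homogenization/Fourier-type manipulations) to conclude that $H$ and $F$ are unitarily equivalent, forcing $A=\{0\}$. Your approach is instead constructive: assuming $A\neq\{0\}$ you manufacture an explicit element $Y=(H_A\otimes W)\oplus 0\in\hol(F)$, with $W=v-(\bar v\cdot G)G\in\aut(G)$, and show via a rank-one tensor argument that $Y$ cannot equal $T_{(A,G)}X$ for any $X$, because this would force $G$ to take values in the fixed complex line $\C v$ and hence (via the standard fact that CR functions of constant modulus on $\Sphere^{2n-1}$, $n\ge 2$, are constant) contradict non-constancy of $G$.

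What each approach buys: the paper's method is uniform with the proof of \cref{thm:polyMapInfDef} and yields the stronger intermediate conclusion that $H$ and $F$ are unitarily equivalent, but the ``similar argument'' requires some care to transplant, since the Fourier/homogeneous-expansion steps there are tailored to $H^d_n$. Your method is more elementary and self-contained, avoids that machinery entirely, and makes transparent \emph{where} the extra deformation comes from---namely from $\hol(G)$ acting on the tensor factor. The points you flag as ``where the real work lies'' (that $H_A\not\equiv 0$, the target-dimension bookkeeping for $F=H\Leftrightarrow A=\{0\}$, and the rank-one/non-constancy step) are exactly right and are all routine to complete; the only external fact you need beyond what is already in the paper is the constancy of unimodular CR functions on $\Sphere^{2n-1}$ for $n\ge 2$, which is standard.
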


\begin{proof}
If $F$ is holomorphically degenerate, by \cref{prop:propNondeg}, the inequality is satisfied. Assume that $F$ is holomorphically nondegenerate, then the same holds for $H$ by \cref{lem:tensorHolDeg}.
Instead of using $V$ as in the proof of \cref{thm:polyMapInfDef}, one considers $\widetilde V$ a linear map defined by $\widetilde V(X) \coloneqq T_{(A,G)} X$. Then $X \cdot \bar H = \widetilde V(X) \cdot \bar F$, i.e. $X \in \hol(H) \Leftrightarrow \widetilde V(X) \in \hol(F)$. 

It holds that if there exists $Y \in \hol(H)$ with $\widetilde V(Y) = 0$ on $\Sphere{2n-1}$, then $0 = \widetilde V(Y) \cdot \bar F = Y \cdot \bar H$ on $\Sphere{2n-1}$, which implies, since $H$ is holomorphically nondegenerate, that $Y\equiv 0$.
From this it follows that the set $\{\widetilde V(X_j): 1\leq j \leq k\}$, for $X_1,\ldots, X_k$ a basis of $\hol(H)$, is linearly independent in $\hol(F)$, which gives the claimed inequality.

For the equality, assume that $\dim  \hol(F) = \dim \hol(H) < \infty$. Then $\dim \hol(H) = \dim \widetilde V(\hol(H)) \leq \dim \hol(F) = \dim \hol(H)$, which implies, as in the proof of \cref{thm:polyMapInfDef}, that on $\Sphere{2n-1}$ the map $\widetilde V$, as a map from $\hol(H)$ to $\hol(F)$, is invertible. 
Using a similar argument as in the proof of \cref{thm:polyMapInfDef} (replacing $V$ by $\tilde V$, $H^d_n$ by $F$ and $P$ by $H$ and using $X \cdot \bar H = \tilde V(X) \cdot \bar F$) it follows that $H$ and $F$ are unitarily equivalent, which can only happen, when the complex subspace $A$ is trivial. This concludes the proof.
\end{proof}

The remainder of this section is a collection of lemmas concerning some properties of $V_H$ and its transpose and provide sufficient and necessary conditions for infinitesimal rigidity in terms of $V_H$ and its adjoint.

\begin{proposition}
\label{prop:TensorVHMap}
For any polynomial map $H:\Sphere{2n-1} \rightarrow \Sphere{2m-1}$ of degree $d$ one has $V_H H = H^d_n$ and $H = \prescript{t}{}{\bar V_H} H^d_n$ on $\Sphere{2n-1}$.
\end{proposition}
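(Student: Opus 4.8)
The plan is to read off both identities from the single defining relation of the reflection matrix, using the norm normalisation of $H^d_n$ and its nondegeneracy to bridge the holomorphic and antiholomorphic sides. Since $H=P$ is polynomial, $Q\equiv 1$, so \eqref{eq:V} specialises on $\Sphere{2n-1}$ to
\begin{align*}
X \cdot \bar H = V_H X \cdot \bar H^d_n, \qquad X\in\C^m,
\end{align*}
and this is the only analytic input; the rest is bilinear bookkeeping.

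First I would extract the second identity. The displayed relation is linear in $X$ and holds for every $X\in\C^m$, so comparing coefficients of $X_j$ gives $\bar H_j = \sum_k (V_H)_{kj}(\bar H^d_n)_k$, that is $\bar H = \prescript{t}{}{V_H}\bar H^d_n$ on $\Sphere{2n-1}$. Taking complex conjugates yields $H = \prescript{t}{}{\bar V_H}H^d_n$, which is precisely the second asserted identity.

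For the first identity I would use the normalisation $\|H\|^2=\|H^d_n\|^2=1$ on $\Sphere{2n-1}$. Substituting $\bar H = \prescript{t}{}{V_H}\bar H^d_n$ into $1=H\cdot\bar H$ and applying the transpose--pairing identity $u\cdot \prescript{t}{}{A}v=(Au)\cdot v$ for the bilinear product gives $1=(V_H H)\cdot\bar H^d_n$. Subtracting $1=H^d_n\cdot\bar H^d_n$ leaves
\begin{align*}
(V_H H - H^d_n)\cdot \bar H^d_n = 0 \quad \text{on } \Sphere{2n-1}.
\end{align*}
The vector $W\coloneqq V_H H - H^d_n$ is holomorphic (indeed polynomial, since the entries of $V_H=V$ and the components of $H$ and $H^d_n$ are polynomials), so the holomorphic nondegeneracy of $H^d_n$---which holds by \cref{lem:homMapNonDeg} and \cref{prop:propNondeg}(b)---forces $W\equiv 0$ on $\Sphere{2n-1}$, i.e. $V_H H=H^d_n$.

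I expect no serious obstacle: the argument is essentially routine once the defining relation is in hand. The only points demanding care are the bookkeeping of transposes and conjugates, and the observation that the operative form of nondegeneracy (no nontrivial holomorphic $W$ with $W\cdot\bar H^d_n=0$ on the sphere, exactly as used in the proof of \cref{prop:equivDeg}) is what cancels the antiholomorphic factor $\bar H^d_n$ and lets one pass from the pairing against $\bar H^d_n$ to the pointwise equality of the two holomorphic vectors. All identities are asserted only on $\Sphere{2n-1}$, so no statement about agreement off the sphere is required.
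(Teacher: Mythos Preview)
Your argument is correct and matches the paper's proof in all essentials. The only cosmetic difference is the order: the paper first plugs $X=H$ into the defining relation to obtain $V_H H\cdot\bar H^d_n=1$ directly (rather than via your substitution of the second identity into $H\cdot\bar H=1$, which is the same computation), and then reads off $\bar H=\prescript{t}{}{V_H}\bar H^d_n$ from the ``for all $X$'' relation exactly as you do.
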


\begin{proof}
Using the matrix $V_H$ from \cref{def:VH} the following holds on $\Sphere{2n-1}$:
\begin{align*}
V_HX \cdot \bar H^d_n = X \cdot \bar H, 
\end{align*}
for all $X\in \C^m$. Taking $X = H$ in the above equation and using $H \cdot \bar H = 1 = H^d_n \cdot \bar H^d_n$ on $\Sphere{2n-1}$, it holds that,
\begin{align*}
V_H H \cdot \bar H^d_n = 1 = H^d_n \cdot \bar H^d_n, 
\end{align*}
on $\Sphere{2n-1}$. Note that $V_H H$ has holomorphic components such that the holomorphic nondegeneracy of $H^d_n$ implies that $V_H H = H^d_n$.

For the other identity one has,
\begin{align*}
X \cdot \bar H = V_H X \cdot \bar H^d =  X \cdot \prescript{t}{}{V_H} \bar H^d,
\end{align*}
for all $X \in \C^n$, which concludes the proof.
\end{proof}

The following example shows that a similar relation as the second identity in \cref{prop:TensorVHMap} does not hold for infinitesimal deformations in general:
\begin{example}
\label{ex:tBVGlNotHolo}
Let $X$ be of the form as $T_1$ in \cref{sec:generalInfAuto} with $H$ being the map
\begin{align*}
G^3=(z^7,\sqrt{7} z^5 w, \sqrt{14} z^3 w^2, \sqrt{7} z w^3, w^7)
\end{align*}
and $\alpha' = (0,0,0,a,0) \in\C^5$. Then, on $\Sphere{3}$, one has:
\begin{align*}
\prescript{t}{}{\bar V_{G^3}} V_{G^3} X = & \frac a 5 \left(0,0,\sqrt{2} z^2 \bar w(2 |z|^2 + 3 |w|^2), 5 |w|^6 + 15 |z|^2 |w|^4 + 9 |z|^4 |w|^2 + |z|^6,0\right) \\
& + \text{holomorphic terms},
\end{align*}
which does not extend holomorphically to a neighborhood of $\Sphere{3}$.
\end{example}

\begin{lemma}
\label{lem:UntensorVH}
Let $H:\Sphere{2n-1} \rightarrow \Sphere{2m-1}$ be a polynomial map of degree $d$.
\begin{itemize}
\item[(a)] It holds that $X\in \hol(H)$ if and only if $X'\coloneqq \prescript{t}{}{\bar V_H} V_H X$ satisfies $\re(X' \cdot \bar H) = 0$ on $\Sphere{2n-1}$. 
\item[(b)] If $Y \in \hol(H^d_n)$ has the property that $\prescript{t}{}{\bar V_H} Y$ is holomorphic, then $Y \in V_H(\hol(H))$. 
\end{itemize}
\end{lemma}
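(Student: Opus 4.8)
The plan is to derive both equivalences from three ingredients that are already available: the adjoint identity $(Bu)\cdot v = u\cdot(\prescript{t}{}{B}v)$ for the Euclidean bilinear pairing $\cdot$, the defining relation $V_H X \cdot \bar H^d_n = X\cdot\bar H$ on $\Sphere{2n-1}$ from \cref{def:VH} (with $Q\equiv 1$, since $H$ is polynomial), which I will use not only for constant $X\in\C^m$ but pointwise for holomorphic $X(z)$ by applying it to the constant vector $X(z)$ at each $z$, and the identity $V_H H = H^d_n$ on $\Sphere{2n-1}$ from \cref{prop:TensorVHMap}. The guiding observation is that the combination $\prescript{t}{}{\bar V_H}V_H$ is built precisely so that these two relations telescope.

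For part (a) I would show that $X'\cdot\bar H$ coincides with $X\cdot\bar H$ on $\Sphere{2n-1}$, which makes the two real parts vanish simultaneously. Applying the adjoint identity to $X'=\prescript{t}{}{\bar V_H}(V_H X)$ gives $X'\cdot\bar H = (V_H X)\cdot(\bar V_H \bar H) = (V_H X)\cdot\overline{V_H H}$; substituting $V_H H = H^d_n$ rewrites this as $(V_H X)\cdot\overline{H^d_n}$, and the defining relation turns it into $X\cdot\bar H$. Hence $\re(X'\cdot\bar H)=\re(X\cdot\bar H)$ on $\Sphere{2n-1}$, and the equivalence with $X\in\hol(H)$ is immediate.

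For part (b) I would set $X\coloneqq \prescript{t}{}{\bar V_H}Y$, which is holomorphic precisely by the standing hypothesis. The same adjoint computation as in (a) yields $X\cdot\bar H = Y\cdot\overline{V_H H} = Y\cdot\overline{H^d_n}$ on $\Sphere{2n-1}$, and since $Y\in\hol(H^d_n)$ the real part vanishes, so $X\in\hol(H)$. It then remains to verify $V_H X = Y$, which exhibits $Y$ as an element of $V_H(\hol(H))$. For this I consider the vector $Z\coloneqq V_H X - Y$, which is holomorphic because $V_H$, $X$ and $Y$ are. From $V_H X\cdot\overline{H^d_n}=X\cdot\bar H = Y\cdot\overline{H^d_n}$ one reads off $Z\cdot\overline{H^d_n}=0$ on $\Sphere{2n-1}$; since $H^d_n$ is holomorphically nondegenerate (by \cref{lem:homMapNonDeg} together with \cref{prop:propNondeg}), this forces $Z\equiv 0$, i.e.\ $V_H X = Y$.

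The bookkeeping with the adjoint identity is routine; the only genuine step is the concluding appeal to holomorphic nondegeneracy in (b). The hypothesis that $\prescript{t}{}{\bar V_H}Y$ be holomorphic is exactly what makes $Z$ holomorphic and thereby what licenses the nondegeneracy argument, and \cref{ex:tBVGlNotHolo} shows this holomorphicity can genuinely fail, so the hypothesis cannot be dropped. I expect the main care to lie in justifying the pointwise extension of the defining relation from constant vectors to holomorphic $X(z)$, after which both parts reduce to the two-line manipulations above.
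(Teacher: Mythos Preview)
Your proposal is correct and follows essentially the same route as the paper: both parts reduce to the chain $X\cdot\bar H = V_H X\cdot\overline{H^d_n} = V_H X\cdot\overline{V_H H} = \prescript{t}{}{\bar V_H}V_H X\cdot\bar H$ on $\Sphere{2n-1}$ via \cref{prop:TensorVHMap}, and in (b) the conclusion $V_H X = Y$ is obtained exactly as you do, by invoking the holomorphic nondegeneracy of $H^d_n$. The pointwise extension of the defining relation to holomorphic $X(z)$ is indeed immediate, so there is nothing further to justify.
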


In (b) the necessary direction need not be true as \cref{ex:tBVGlNotHolo} shows.

\begin{proof}
By \cref{prop:TensorVHMap} the following holds on $\Sphere{2n-1}$:
\begin{align*}
X \cdot \bar H = V_H X \cdot \bar H^d_n = V_H X \cdot \bar V_H \bar H = \prescript{t}{}{\bar V_H} V_H X \cdot \bar H,
\end{align*}
which shows (a). 
In (b) assume $X\coloneqq \prescript{t}{}{\bar V_H} Y$ is holomorphic. By \cref{prop:TensorVHMap} one has,
\begin{align*}
Y \cdot \bar H^d_n = Y \cdot \bar V_H \bar H = \prescript{t}{}{\bar V_H} Y \cdot \bar H = X \cdot \bar H, 
\end{align*}
on $\Sphere{2n-1}$ and taking the real part shows that $X \in \hol(H)$. Consider the above equation and note that one has $ X \cdot \bar H = V_H X \cdot \bar H^d_n$, such that, since $V_H X$ is holomorphic, the holomorphic nondegeneracy of $H^d_n$ implies $Y=V_H X \in V_H(\hol(H))$.
\end{proof}

\begin{proposition}
Let $H:\Sphere{2n-1} \rightarrow \Sphere{2m-1}$ be a polynomial map of degree $d$. 
\begin{itemize}
\item[(a)] If the map $H$ is infinitesimally rigid then $\prescript{t}{}{\bar V_H} \hol (H^d_n)\cap \hol(H)= \prescript{t}{}{\bar V_H} V_H \aut(H)\cap \hol(H)$. 
\item[(b)] Assume that the map $H$ is holomorphically nondegenerate. If $\prescript{t}{}{\bar V_H} \hol (H^d_n)= \prescript{t}{}{\bar V_H} V_H \aut(H)$, then $H$ is infinitesimally rigid.
\end{itemize}
\end{proposition}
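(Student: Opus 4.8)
The plan is to funnel both parts through the basic equivalence
\begin{align*}
X \in \hol(H) \Leftrightarrow V_H X \in \hol(H^d_n) \quad \text{on } \Sphere{2n-1},
\end{align*}
which holds for polynomial $H$ by \eqref{eq:holHFromHolHd} (here $Q\equiv 1$, so $V = V_H$), together with \cref{prop:TensorVHMap}, \cref{lem:UntensorVH}, and one elementary observation: the matrix $\prescript{t}{}{\bar V_H} V_H$ is Hermitian positive semidefinite, so for every holomorphic $U$ one has $\bar U \cdot (\prescript{t}{}{\bar V_H} V_H U) = \|V_H U\|^2$ pointwise on $\Sphere{2n-1}$. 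Consequently $\prescript{t}{}{\bar V_H} V_H U = 0$ on $\Sphere{2n-1}$ forces $V_H U = 0$ there, and then $U\equiv 0$ whenever $H$ is holomorphically nondegenerate, by \cref{prop:equivDeg}.

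For (a) I would first record the inclusion that holds unconditionally: since $\aut(H) \subseteq \hol(H)$, the equivalence gives $V_H \aut(H) \subseteq \hol(H^d_n)$, whence $\prescript{t}{}{\bar V_H} V_H \aut(H) \subseteq \prescript{t}{}{\bar V_H} \hol(H^d_n)$ and therefore $\prescript{t}{}{\bar V_H} V_H \aut(H) \cap \hol(H) \subseteq \prescript{t}{}{\bar V_H} \hol(H^d_n) \cap \hol(H)$. The reverse inclusion is where rigidity enters: taking $Z = \prescript{t}{}{\bar V_H} Y$ with $Y \in \hol(H^d_n)$ and $Z \in \hol(H)$, the holomorphicity of $Z$ lets me apply \cref{lem:UntensorVH}(b) to get $Y = V_H X$ for some $X \in \hol(H)$; infinitesimal rigidity then gives $X \in \aut(H)$, so $Z = \prescript{t}{}{\bar V_H} V_H X \in \prescript{t}{}{\bar V_H} V_H \aut(H)$. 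Intersecting again with $\hol(H)$ closes the equality.

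For (b) I would take an arbitrary $X \in \hol(H)$ and aim to show $X \in \aut(H)$. By the equivalence $V_H X \in \hol(H^d_n)$, so $\prescript{t}{}{\bar V_H} V_H X \in \prescript{t}{}{\bar V_H} \hol(H^d_n)$, which by hypothesis equals $\prescript{t}{}{\bar V_H} V_H \aut(H)$. Hence there is $W \in \aut(H)$ with $\prescript{t}{}{\bar V_H} V_H (X - W) = 0$ on $\Sphere{2n-1}$. Applying the positivity observation to $U = X - W$ yields $V_H(X-W) = 0$ on $\Sphere{2n-1}$, and holomorphic nondegeneracy (\cref{prop:equivDeg}) forces $X = W \in \aut(H)$. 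Since $\aut(H) \subseteq \hol(H)$ always, this gives $\hol(H) = \aut(H)$, i.e. infinitesimal rigidity.

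Because the structural inputs are already in hand, the only genuinely substantive step is the passage from $\prescript{t}{}{\bar V_H} V_H U = 0$ to $U \equiv 0$; I expect this to be the main point to get right, and it splits cleanly into the pointwise positivity argument (which kills $V_H U$) followed by the holomorphic nondegeneracy of $H$ (which kills $U$). A secondary point requiring care is the holomorphicity hypothesis in \cref{lem:UntensorVH}(b) invoked in (a): it is met precisely because the chosen $Z$ lies in $\hol(H)$ and elements of $\hol(H)$ are by definition holomorphic, in contrast to \cref{ex:tBVGlNotHolo}, which warns that $\prescript{t}{}{\bar V_H}$ applied to a general element of $\hol(H^d_n)$ need not be holomorphic.
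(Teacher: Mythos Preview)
Your proof is correct and, for part (a), essentially identical to the paper's. For part (b), both you and the paper reach the same intermediate point: there exists $W\in\aut(H)$ with $\prescript{t}{}{\bar V_H}V_H(X-W)=0$. The paper then argues that, by holomorphic nondegeneracy, $V_H$ has rank $m$ on a dense open subset $S\subset\Sphere{2n-1}$, extends this to an open set $U\subset\C^n$ with $U\cap\Sphere{2n-1}=S$ (since the entries of $V_H$ are holomorphic), concludes that the $m\times m$ matrix $A=\prescript{t}{}{\bar V_H}V_H$ is invertible on $U$, and hence $X=W$ on $U$ and therefore everywhere. Your route is different and a bit more direct: the pointwise identity $\overline{U}\cdot(\prescript{t}{}{\bar V_H}V_H U)=\|V_H U\|^2$ immediately yields $V_H(X-W)=0$ on $\Sphere{2n-1}$, and then \cref{prop:equivDeg}(b) (with $Q\equiv1$, so $V=V_H$) gives $X=W$. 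This bypasses the extension of the rank condition off the sphere. Both arguments are valid; yours trades the rank/extension step for an elementary positivity observation.
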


\begin{proof}
To prove (a), assume $X \in \prescript{t}{}{\bar V_H} \hol (H^d_n)\cap \hol(H)$ such that there exists $Y \in \hol(H^d_n)$ with $X = \prescript{t}{}{\bar V_H} Y$. Thus, $\prescript{t}{}{\bar V_H} Y\in \hol(H)$ (in particular $\prescript{t}{}{\bar V_H} Y$ is holomorphic) and by \cref{lem:UntensorVH} (b) it holds that  $Y \in V_H \hol (H) = V_H \aut(H)$ by the infinitesimal rigidity of $H$. In total this shows that $X \in \prescript{t}{}{\bar V_H} V_H \aut(H) \cap \hol(H)$.
For the other implication, note that if $X \in \prescript{t}{}{\bar V_H} V_H \aut(H) \cap \hol(H)$, then, since $V_H\aut(H) \subset \hol(H^d_n)$, it follows that $X \in \prescript{t}{}{\bar V_H} \hol(H^d_n) \cap \hol(H)$.

For (b) let $X \in \hol(H)$, then $V_H X \in \hol(H^d_n)$ and hence $\prescript{t}{}{\bar V_H} V_H X \in  \prescript{t}{}{\bar V_H} \hol(H^d_n) = \prescript{t}{}{\bar V_H} V_H \aut(H)$. 
Thus there exists $T \in \aut(H)$, such that $A X = A T$ for $A\coloneqq \prescript{t}{}{\bar V_H} V_H$. Since $H$ is holomorphically nondegenerate it holds that the $K(n,d) \times m$-matrix $V_H$ is injective on a dense open subset $S$ of $\Sphere{2n-1}$ (see \cref{prop:equivDeg}), hence one has $\rank V_H = m$  in $S$. Since $V_H$ consists of holomorphic entries in $z$, this means that $V_H$ is of rank $m$ in an open set $U$ such that $U\cap\Sphere{2n-1}=S$. It follows that the $m\times m$-matrix $A$ is of full rank $m$ in $U$, and thus $X = T$ in $U$. Since $X$ and $T$ are holomorphic they agree in $\C^n$. This shows that $X \in \aut(H)$.
\end{proof}

\section{Infinitesimal deformations of the homogeneous sphere map}

In this section the dimension of the space of infinitesimal deformations of the homogeneous sphere map $H^d_n$ (see \cref{def:homSphereMap}) is computed. 

\begin{theorem}
\label{thm:dimInfDef}
The real dimension of the space of infinitesimal deformations of $H^d_n$ is given by $\left(\frac{2d+n}{d}\right)K(n,d)^2$. 
\end{theorem}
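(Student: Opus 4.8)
The plan is to compute $\dim_{\R}\hol(H^d_n)$ directly from the defining equation, exploiting the circle symmetry of the sphere together with the bigraded structure of polynomials. First I would reduce to a clean counting problem. By \cref{lem:infDefRational} (with $Q=1$) every $X\in\hol(H^d_n)$ is a holomorphic polynomial map $\C^n\to\C^{K(n,d)}$ of degree at most $2d$ with $\re\bigl(X\cdot\overline{H^d_n}\bigr)=0$ on $\Sphere{2n-1}$. Writing $X=(X_\alpha)_{|\alpha|=d}$ and absorbing the nonzero coefficients $a^{dn}_\alpha$ into $Y_\alpha\coloneqq a^{dn}_\alpha X_\alpha$, the condition becomes $\re\bigl(\sum_{|\alpha|=d}Y_\alpha\bar z^\alpha\bigr)=0$ on $\Sphere{2n-1}$, where each $Y_\alpha$ is holomorphic of degree $\le 2d$; the correspondence $X\leftrightarrow Y$ is a real-linear isomorphism and hence preserves the dimension.

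Second, I would split the condition by the weight of the $S^1$-action $z\mapsto e^{it}z$, which preserves $\Sphere{2n-1}$. Since $r-1$, with $r\coloneqq\|z\|^2$, is invariant under this action, restriction to the sphere preserves weight, and the single real condition decouples into one condition for each weight $m\in\{-d,\dots,d\}$. Conjugation swaps $m$ and $-m$, so it suffices to treat $m=0$ and $m=1,\dots,d$, and the full space of deformations is the direct sum of the per-weight solution spaces (the homogeneous parts $Y^{(d+m)}_\alpha, Y^{(d-m)}_\alpha$ feeding weight $\pm m$ are disjoint across $m$). Let $V_j$ be the space of homogeneous holomorphic polynomials of degree $j$ in $z$, with $\dim_{\C}V_j=h_j\coloneqq\binom{n+j-1}{j}$ (so $K(n,d)=h_d$) and $\bar V_j$ its conjugate. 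Via the isomorphism $(p_\alpha)_\alpha\mapsto\sum_\alpha p_\alpha\bar z^\alpha$, the weight-$m$ unknowns are identified with arbitrary $F_+\in V_{d+m}\otimes\bar V_d$ and $\overline{F_-}\in V_d\otimes\bar V_{d-m}$, and the condition reads $F_+ + \overline{F_-}\in(r-1)$.

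The crux is to determine this kernel, i.e.\ which $F_+ + \overline{F_-}$ are divisible by $r-1$; I expect this, together with the real-versus-complex bookkeeping in the weight-$0$ block, to be the main obstacle. Writing $F_+ + \overline{F_-}=(r-1)q$ and decomposing $q=\sum_b q_b$ into bidegrees, multiplication by $r$ shifts bidegree by $(1,1)$ and is injective, so matching bidegrees yields the recursion $q_b=r\,q_{b-1}$ away from the two bidegrees carrying $F_+$ and $\overline{F_-}$; demanding that $q$ be a genuine polynomial (termination of the recursion) forces $F_+=-r^m\overline{F_-}$, using that $1-r^m$ is divisible by $r-1$. Hence for $m\ge1$ the kernel is exactly $\{(-r^m\overline{F_-},F_-)\}$, of complex dimension $\dim_{\C}(V_d\otimes\bar V_{d-m})=K\,h_{d-m}$, and for $m=0$ the same recursion shows restriction is injective on $V_d\otimes\bar V_d$.

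Finally I would assemble the count by rank--nullity on each block. For $m\ge1$ the constraint map is complex-linear onto its sphere-restriction, whose complex dimension is $K(h_{d+m}+h_{d-m})-K\,h_{d-m}=K\,h_{d+m}$, so the solution space has real dimension $\kappa_m=2\bigl[K(h_{d+m}+h_{d-m})-K\,h_{d+m}\bigr]=2K\,h_{d-m}$; the weight-$0$ block contributes $\kappa_0=K^2$, since the real-linear map $X^{(d)}\mapsto 2\re\bigl(\sum_\alpha a^{dn}_\alpha X^{(d)}_\alpha\bar z^\alpha\bigr)|_{\Sphere{2n-1}}$ has image of real dimension $K^2$ (injectivity on bidegree $(d,d)$) out of a $2K^2$-dimensional domain. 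Summing,
\begin{align*}
\dim_{\R}\hol(H^d_n)=K^2+2K\sum_{m=1}^{d}h_{d-m}=K^2+2K\sum_{j=0}^{d-1}\binom{n+j-1}{j}.
\end{align*}
The hockey-stick identity gives $\sum_{j=0}^{d-1}\binom{n+j-1}{j}=\binom{n+d-1}{d-1}=\tfrac{d}{n}K(n,d)$, so the total equals $K^2\bigl(1+\tfrac{2d}{n}\bigr)=\tfrac{2d+n}{n}K(n,d)^2$. As a sanity check, at $d=1$ this returns $\dim_{\R}\hol(\Sphere{2n-1})=n(n+2)$, with the weight-$0$ and weight-$1$ blocks reproducing the stabilizer fields $S_n^2,S_n^3$ and the fields $S_n^1$ respectively, which pins down the constant as $\tfrac{2d+n}{n}$.
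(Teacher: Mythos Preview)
Your proof is correct and follows essentially the same approach as the paper: both decompose the deformation equation by the weight of the $S^1$-action $z\mapsto e^{i\theta}z$ (the paper calls this the Fourier coefficient technique), and at each weight $m$ both show that the solution is determined by the lower-degree piece $X^{(d-m)}$, giving $K(n,d)\cdot K(n,d-m)$ complex parameters (the paper encodes these as the matrix $B_n^{d,m}$, you as the tensor $\overline{F_-}\in V_d\otimes\bar V_{d-m}$). Your bidegree recursion for the quotient $q$ is a clean repackaging of the paper's homogenization step; the paper's argument via the explicit matrices $C_n^{d,\alpha}$ and $D_n^{d,\alpha}$ is more concrete but amounts to the same thing.

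You have also correctly identified the constant: the paper's own computation in the proof yields $2K(n,d)\binom{n+d-1}{n}+K(n,d)^2=\frac{2d+n}{n}K(n,d)^2$, matching your answer, and the displayed $\frac{2d+n}{d}$ in the statement is a typo (as your $d=1$ sanity check and the paper's own $n=2$ example $\dim_\R\hol(H^d_2)=(d+1)^3$ both confirm).
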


\begin{proof}
Define the map $\hat H_n^d(z)=(z^\alpha)_{|\alpha|=d} \in \C^{K(n,d)}$ and write $Y = D_n^d X \in \hol(H_n^d)$, where $D_n^d$ is given as in the hypothesis. Then $X$ has to satisfy the following equation on $\Sphere{2n-1}$,
\begin{align*}
X \cdot \overline{\hat H_n^d} + \bar X \cdot \hat H_n^d  = 0.
\end{align*}
In the above equation consider the homogeneous expansion of $X = \sum_{k\geq 0} X^k$, where $X^k\in \C^{K(n,d)}$ is a homogeneous polynomial in $z$ of order $k \geq 0$. Change coordinates via $z \mapsto e^{i \theta}z$ for $\theta \in \R$ as in \cite{DAngelo88b}*{Lemma 16} to obtain after shifting indices, on $\Sphere{2n-1}$:
\begin{align}
\label{eq:fourierCoeff}
\sum_{s \geq -d} X^{d+s} \cdot \overline{\hat H^d_n} e^{i s \theta}+ \sum_{t\leq d}\bar X^{d-t} \cdot \hat H^d_n e^{i t \theta} = 0.
\end{align}
This implies $X^{\ell} \equiv 0$ for $\ell \geq 2 d + 1$, such that $-d \leq s,t\leq d$. 
Collect Fourier coefficients of $e^{i r \theta}$ for $-d \leq r \leq d$ in \eqref{eq:fourierCoeff} such that for $X \in \hol(H^d_n)$ it is enough to study the solutions of the following equations:
\begin{align}
\label{eq:infDefNonHom}
X^{d+r} \cdot \overline{\hat H^d_n} + \bar X^{d-r} \cdot \hat H^d_n = 0, \qquad 0 \leq r \leq d, 
\end{align}
on $\Sphere{2n-1}$. For $r=0$ the equation is real and will be treated below. Fix $1\leq r \leq d$ and homogenize \eqref{eq:infDefNonHom} as in \cite{DAngelo91}*{section II} by multiplying the second term with $\|z\|^{2r}$, such that the following equation holds for all $z\in \C^n$:
\begin{align}
\label{eq:infDefHom}
\overline{\hat H^d_n} \cdot X^{d+r}  + \sum_{|\alpha| = r} {\binom{r}{\alpha}} \bar X^{d-r} \bar z^\alpha \cdot  z^\alpha \hat H^d_n = 0.
\end{align}
Write $X^{d-r} = B^{d,r}_n \hat H^{d-r}_n$, where $B_n^{d,r}$ is a $K(n,d)\times K(n,d-r)$-matrix and write $z^\alpha \hat H^d_n =  C_n^{d,\alpha} \hat H^{d+r}_n$, where $C_n^{d,\alpha}$ is a $K(n,d) \times K(n,d+r)$-matrix whose entries consist of $0$'s or $1$'s. Rewrite the second term of \eqref{eq:infDefHom} as
\begin{align*}
\sum_{|\alpha| = r} {\binom{r}{\alpha}} \bar B^{d,r}_n \overline{\hat H^{d-r}_n} \cdot  C_n^{d,\alpha} H^{d+r}_n = \sum_{|\alpha| = r} {\binom{r}{\alpha}} \bar z^\alpha \overline{\hat H^{d-r}_n} \cdot  \prescript{t}{}{\bar B^{d,r}_n} C_n^{d,\alpha} H^{d+r}_n 
\end{align*}
Write $z^\alpha \hat H^{d-r}_n =  D_n^{d,\alpha} \hat H^{d-r}_n$, where $D_n^{d,\alpha}$ is a $K(n,d) \times K(n,d-r)$-matrix whose entries consist of $0$'s or $1$'s, such that using the holomorphic nondegeneracy of $H^d_n$, one obtains 
\begin{align}
\label{eq:infDefHomHol}
X^{d+r}  +  \sum_{|\alpha| = r} {\binom{r}{\alpha}} D_n^{d,\alpha} \prescript{t}{}{\bar B^{d,r}_n} C_n^{d,\alpha} H^{d+r}_n  = 0.
\end{align}
Setting $X^{d+r} = A_n^{d,r} \hat H_n^{d+r}$, where $A_n^{d,r}$ is an $K(n,d) \times K(n,d+r)$-matrix, \eqref{eq:infDefHomHol} gives:
\begin{align*}
A_n^{d,r}  = - \sum_{|\alpha| = r} {\binom{r}{\alpha}} 
D_n^{d,\alpha} \prescript{t}{}{\bar B^{d,r}_n} C_n^{d,\alpha}.
\end{align*}

This implies that the solutions of \eqref{eq:infDefNonHom} depend on the entries of $B_n^{d,r}$ for $1\leq r \leq d$. The number of real entries in $B_n^{d,r}$ for $1\leq r \leq d$ is given by
\begin{align}
\label{eq:dimOne}
2 \sum_{r = 1}^{d} K(n,d)K(n,d-r) = 2 K(n,d) \binom{n+d-1}{n}.
\end{align}

For $r=0$ in \eqref{eq:infDefNonHom} one has 
\begin{align}
\label{eq:infDefHomR0}
X^{d} \cdot \overline{\hat H^d_n} + \bar X^{d} \cdot \hat H^d_n = 0,
\end{align}
which is a homogeneous equation and hence holds on $\C^n$. Writing $X^d=B_n^{d,0} \hat H^d_n$, where $B_n^{d,0}$ is a $K(n,d) \times K(n,d)$-matrix, in \eqref{eq:infDefHomR0} shows that $\prescript{t}{}{B_n^{d,0}} + \bar B_n^{d,0} = 0$. Hence $B_n^{d,0}$ depends on $K(n,d)^2$ real parameters. 
Moreover the stabilizer of $H^d_n$ consists of $S_n^2$ and $S_n^3$ and the remaining elements of $\aut(H_n^d)$ coming from $\hol(\Sphere{2n-1})$ appear in \eqref{eq:infDefNonHom} for $r=1$ and have been taken into account in \eqref{eq:dimOne}. 
In total, adding \eqref{eq:dimOne} and the number of real solutions of \eqref{eq:infDefNonHom} for $r=0$ gives the claimed dimension of $\hol(H^d_n)$, which finishes the proof.
\end{proof}

\cref{thm:polyMapInfDefIntro} is an immediate consequence of \cref{thm:polyMapInfDef} and \cref{thm:dimInfDef}. Some examples illustrating \cref{thm:dimInfDef} are provided in the following:

\begin{example}
\label{ex:listInfDefHomMap}
For $n=2$ write $H_2^d(z,w)=(a_1^d z^d, a_2^d z^{d-1}w, \ldots, a_{d+1}^d w^d)$, where $(a_k^d)^2 = \binom{d}{k-1}$ for $1\leq k \leq d+1$, such that $\dim_{\R} \hol(H^d_2) = (d+1)^3$ and any $X\in \hol(H^d_2)$ is a real linear combination of infinitesimal deformations $N^{m,k}_{d,r}$ of the following form: 
\begin{align*}
N^{m,k}_{d,r} = D_{d+1} \left(C^{m,k}_{d,r} \hat H^{d-r} - \sum_{\ell=0}^r {\binom{r}{\ell}} \left(0_\ell, z^{r-\ell} w^\ell \prescript{t}{}{\bar C}^{m,k}_{d,r} \hat H^d,0_{r-\ell} \right) \right),
\end{align*}
where $0\leq r\leq d$, $D_{d+1}$ is the $(d+1)\times(d+1)$-diagonal matrix with entries $1/a_k^d$ for $1\leq k \leq d+1$, $C^{m,k}_{d,r} =(c_{ij})$ is a $(d+1) \times (d+1-r)$-matrix with $c_{mk} \in \{1,i\}$ for $1\leq m \leq d+1$ and $1\leq k \leq d+1-r$ and all other entries are $0$ and $0_j$ denotes the zero-vector in $\C^j$. 
To illustrate which nontrivial infinitesimal deformations for $H^d_2$ appear, a list for $d=2,3$ is given. Note that if $N$ is a nontrivial infinitesimal deformation of $H^d_2$, then $\tilde N = \phi' \circ N \circ \phi$ is again a nontrivial infinitesimal deformation of $H^d_2$ different from $N$, where $\phi(z,w) = (w,z)$ and 
\begin{align*}
\phi'(z_1,\ldots, z_{n+1}) = \left\lbrace 
\begin{array}{ll}
  (z_{n+1},\ldots, z_{k+2}, z_{k+1} ,z_{k},\ldots,z_1), &  n = 2 k, k\geq 1\\
  (z_{n+1},\ldots, z_{k+1}, z_{k},\ldots,z_1), &  n = 2 k-1, k \geq 2.
  \end{array}
\right.
\end{align*}
Below  half of all nontrivial infinitesimal deformations of $H^d_2$ are listed as rows of the matrix $Y^d_2$.  The remaining nontrivial elements of $\hol(H^d_2)$ can be deduced by applying $\phi$ and $\phi'$ to each row of $Y^d_2$. 


\begin{align*}
Y^2_2 = \left(\begin{array}{ccc}
a w& - \frac{\bar a z^3}{\sqrt{2}}  & - \bar a z^2 w \\
  - \bar b z^2 w & \frac{b z - \bar b z w^2}{\sqrt{2}}&  0\\
\end{array} \right),
\end{align*}
where $a,b\in \C$, which are also listed in \cite{dSLR17}*{Example 1}. 
\begin{align*}
Y^3_2 = \left(\begin{array}{cccc}
a z - \bar a z^5 & - \frac{2 \bar a z^4 w}{\sqrt{3}} & - \frac{\bar a z^3 w^2}{\sqrt{3}} & 0\\
b w & -\frac{\bar b z^5}{\sqrt{3}} & - \frac{2 \bar b z^4 w}{\sqrt{3}} & -  \bar b z^3 w^2  \\
 c w^2 & 0 & - \frac{\bar c z^4}{\sqrt{3}} &  -  \bar c z^3 w    \\
 d z w & - \frac{\bar d z^4}{\sqrt{3}} &  - \frac{\bar d z^3 w}{\sqrt{3}} &  0 \\
- \bar e z^4 w &  \frac{e z - 2 \bar e z^3 w^2}{\sqrt{3}} & - \frac{\bar e z^2 w^3}{\sqrt{3}} & 0 \\
0  & \frac{f w - \bar f z^4 w}{\sqrt{3}} & - \frac{2 \bar f z^3 w^2}{\sqrt{3}} &  -  \bar f z^2 w^3  \\
0 & \frac{g w^2}{\sqrt{3}} &  - \frac{\bar g z^3 w}{\sqrt{3}} & -  \bar g z^2 w^2 \\
- \bar j z^3 w & \frac{j z^2 - \bar j z^2 w^2}{\sqrt{3}} & 0 & 0\\
0 & \frac{k z w - \bar k z^3 w}{\sqrt{3}} & - \frac{\bar k z^2 w^2}{\sqrt{3}} & 0
\end{array} \right),
\end{align*}
where $a,b,c,d,e,f,g, j, k\in \C$. 
\end{example}

Using the fact that for a sphere map $H$ and any $X\in \hol(H)$ one has $V_HX \in \hol(H^d_n)$ (see \eqref{eq:holHFromHolHd} in the proof of \cref{thm:polyMapInfDef}) it is possible to compute $\hol(H)$ from a description of $\hol(H^d_n)$: By considering each element of $\hol(H^d_n)$ one needs to check if it can be written as $V_H Y$, where $Y$ is a holomorphic vector field, such that $Y \in \hol(H)$. The vector fields obtained in this way span $\hol(H)$. 

\begin{example}
\label{ex:DegMapInfRigid}
The map $H(z,w) = (z,zw,w^2)$ from $\Sphere{3}$ to $\Sphere{5}$ is of finite degeneracy $1$ in $\Sphere{3} \cap \{z=0\}$ and $2$-nondegenerate otherwise. Its infinitesimal stabilizer consists of $S^3_2$. 
It can be checked that $H$ is infinitesimally rigid.
\end{example}

\begin{example}
For the family of sphere maps $G^\ell$ for $\ell \geq 1$ there are infinitesimal rigid maps and some maps which admit nontrivial infinitesimal deformations:
One can compute that for $G^1: \Sphere{3} \rightarrow \Sphere{5}$ the space $\hol(G^1)$ only consists of trivial infinitesimal deformations, see also \cite{DAngelo03}.  
The map $G^4: \Sphere{3} \rightarrow \Sphere{11}$ given by
\begin{align*}
G^4(z,w) = (z^9, c_1 z^7 w,  c_2 z^5 w^2,  c_3 z^3 w^3,  c_4 z w^4, w^9),
\end{align*}
where $c_1 = c_4 = 3, c_2 = 3 \sqrt{3}$ and $c_3 = \sqrt{30}$ is not infinitesimally rigid, since the vector
\begin{align*}
X=\left(0, - \frac{\bar a} {c_1} z^6 w^3, -\frac{6}{c_2} \bar a z^4 w^4, \frac{z^2 w}{c_3}(a z^2 - 9 \bar a w^4), \frac{w^2}{c_4}(3 a z^2 - \bar a w^4), a z w^7\right), \quad a \in \C,
\end{align*}
corresponds to a nontrivial infinitesimal deformation of $G^4$. This can be verified by showing that $X$ is not of the form as the trivial infinitesimal deformation given in \cref{sec:generalInfAuto}. 
\end{example}

Some of the nontrivial infinitesimal deformations of $H^d_n$ originate from curves passing through the map as the following example shows: In the following a family of finitely nondegenerate rational sphere maps is constructed, which contains the homogeneous sphere map $H^d_n$ for $d$ odd. It is well-known that families of sphere maps exist, see the examples in \cite{DAngelo88a} and \cite{FHJZ10}*{Examples 4.1, 4.2}, which motivated the construction. 
See also \cite{DL16} for a study of homotopies of sphere maps.

\begin{theorem}
For $k\geq 1$ the map $H^{2 k + 1}_2: \Sphere{3} \rightarrow \Sphere{2 k + 2}$ is not locally rigid. More precisely, there exists a family $F^{k}_s: \Sphere{3} \rightarrow \Sphere{2k +2}$ of $(2 k + 1)$-nondegenerate rational maps, where $s \in \R$ is sufficiently close to $0$, with $F^k_0 = H^{2 k + 1}_2$ and each $F^k_s$ is not equivalent to $H^{2 k +1}_2$ for $s\neq 0$.
\end{theorem}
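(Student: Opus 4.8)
The plan is to exhibit the deformation \emph{explicitly} and then check the three required properties---sphere map, $(2k+1)$-nondegeneracy, and inequivalence---one at a time. Write $H\coloneqq H^{2k+1}_2$, whose $2k+2$ components span the target $\C^{2k+2}$, and look for the family in the normalized form $F^k_s=\frac{H+s\dot P}{1+sg}$, where $\dot P\colon\C^2\to\C^{2k+2}$ is a holomorphic polynomial vector and $g$ a holomorphic polynomial, both to be determined. Expanding $\|H+s\dot P\|^2=|1+sg|^2$ on $\Sphere{3}$ and using $\|H\|^2=1$ there, the quadratic-in-$s$ identity collapses to the two conditions $\re(\dot P\cdot\bar H)=\re(g)$ and $\|\dot P\|^2=|g|^2$ on $\Sphere{3}$; if both hold, then $F^k_s$ is a sphere map for every real $s$ with $1+sg\neq0$, and $F^k_0=H$. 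Differentiating at $s=0$ gives $\partial_s F^k_s|_{s=0}=\dot P-Hg=:X\in\hol(H)$, so the velocity of the family is automatically an infinitesimal deformation; the point is to choose $\dot P$ and $g$ so that $X$ is one of the \emph{nontrivial} deformations exhibited via \cref{thm:dimInfDef}, i.e.\ $X\in\hol(H)\setminus\aut(H)$. Since $\deg H=2k+1$ is odd, the symmetry of $H$ should permit a uniform choice; if the normalized ansatz proves too rigid, I would instead allow a general rational family $F^k_s=P_s/Q_s$ and integrate the chosen unobstructed deformation order by order in $s$.

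For the nondegeneracy I would argue by openness rather than by recomputation. By \cref{lem:homMapNonDeg} the map $H$ is $(2k+1)$-nondegenerate at \emph{every} point of $\Sphere{3}$, so by \cref{prop:equivFiniteNonDeg} its reflection matrix $V_H$ has full rank $2k+2$ at each point. The entries of $V_{F^k_s}$ depend continuously on $s$ and on the base point, full rank is an open condition, and $\Sphere{3}$ is compact; hence $V_{F^k_s}$ keeps full rank $2k+2$ on all of $\Sphere{3}$ for $|s|$ small, and by \cref{prop:equivFiniteNonDeg} again each such $F^k_s$ is finitely nondegenerate at every point. Moreover the nondegeneracy order, being governed by the ranks of finitely many jet matrices that vary continuously with $s$ and equals $2k+1$ on all of $\Sphere{3}$ at $s=0$, remains $2k+1$ for small $s$.

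For the inequivalence I would use that $\aut(H)=\hol(M')|_{H(M)}+H_*(\hol(M))$ is exactly the tangent space at $H$ to the orbit $\mathcal O=\{\Psi\circ H\circ\psi:\psi\in\Aut(\Ball{2}),\ \Psi\in\Aut(\Ball{2k+2})\}$, the first summand consisting of the velocities of $\Psi_s\circ H$ and the second of those of $H\circ\psi_s$. By construction $X=\partial_s F^k_s|_{s=0}$ lies in $\hol(H)\setminus\aut(H)$, so the real-analytic curve $s\mapsto F^k_s$ is not tangent to $\mathcal O$ at $s=0$. Concretely, if $F^k_{s_j}\in\mathcal O$ for some $s_j\to0$, writing $F^k_{s_j}=\Psi_j\circ H\circ\psi_j$ and using that the finite nondegeneracy of $H$ makes the orbit map proper, so the automorphisms stay bounded, I would extract limits $\Psi_j\to\Psi_\infty$, $\psi_j\to\psi_\infty$ with $\Psi_\infty\circ H\circ\psi_\infty=H$; after normalizing by this stabilizer element and differentiating, this forces $X\in\aut(H)$, a contradiction. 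Hence $F^k_s$ is inequivalent to $H$ for all small $s\neq0$, so $H$ is not locally rigid. A cleaner variant is available whenever the generic member satisfies $\deg F^k_s>2k+1$: since the degree of a rational sphere map is an equivalence invariant, such members cannot be equivalent to the degree-$(2k+1)$ map $H$ at all.

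The main obstacle is the first step: producing, uniformly in $k$, an explicit integrable nontrivial deformation, i.e.\ a pair $(\dot P,g)$ satisfying both polynomial identities on $\Sphere{3}$ with $X=\dot P-Hg\notin\aut(H)$. The identity $\|\dot P\|^2=|g|^2$ is the delicate one: expressed through $X$ it reads $\|X\|^2=-2\,\im(X\cdot\bar H)\,\im(g)$ on $\Sphere{3}$, and solvability for a holomorphic $g$ forces $-\|X\|^2/\bigl(2\,\im(X\cdot\bar H)\bigr)$ to be the boundary value of the imaginary part of a holomorphic polynomial; this is exactly what singles out which of the deformations from \cref{thm:dimInfDef} integrate. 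Verifying that a suitable $X$ exists for every odd degree and checking the two identities in closed form is where the real work lies, while the remaining steps---nondegeneracy by openness and inequivalence by properness---are comparatively routine given \cref{prop:equivFiniteNonDeg} and \cref{lem:homMapNonDeg}.
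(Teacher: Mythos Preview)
Your strategy is reasonable in outline, but as you acknowledge, it leaves the decisive step---actually producing the family---undone; and your linear ansatz $(H+s\dot P)/(1+sg)$ is indeed too rigid. The paper bypasses the integrability analysis entirely by writing $F^k_s$ down explicitly. The key observation is that the middle binomial coefficients satisfy $c_k=c_{k+1}$, so the $(k+1)$-th and $(k+2)$-th components of $H^{2k+1}_2$ factor as $c_k z^k w^k\cdot(z,w)$; one undoes this tensor step and re-tensors the single component $c_k z^k w^k$ with a one-parameter family of sphere automorphisms $T_s\in\Aut(\Sphere{3})$ with $T_0=\id$. This is a sphere map for every $s$ by the standard tensoring lemma, uniformly in $k$, with no $(\dot P,g)$-identities to verify. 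Your openness argument for $(2k+1)$-nondegeneracy is essentially what the paper does.

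For inequivalence the paper also takes a more concrete route: rather than invoke properness of the orbit map and a transversality argument---which requires knowing that the orbit is a submanifold with tangent space $\aut(H)$, facts not established in this article---it applies the polynomiality criterion of Faran--Huang--Ji--Zhang \cite{FHJZ10}*{Remark 2.3 (A)} to show that for small $s\neq0$ the map $F^k_s$ is not equivalent to \emph{any} polynomial sphere map, hence in particular not to $H^{2k+1}_2$. This reduces to a short coefficient comparison in a single homogenized equation. Your properness-plus-differentiation argument could likely be made rigorous via the jet-parametrization machinery in the rigidity papers cited in the introduction, but as written it assumes results outside the scope of the present paper, whereas the FHJZ criterion gives a self-contained two-line check.
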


\begin{proof}
Consider for $s\in \R$ the map $T_s = (T_s^1,T_s^2): \Sphere{3} \rightarrow \Sphere{3}$ given by
\begin{align*}
T_s(z,w) = \frac{\Bigl(z-1 + (1-s)^2 (z+1),2 (1 - s) w\Bigr)}{2 +s (s - 2) (z+1)},
\end{align*}
such that $T_0(z,w) = (z,w)$, 
which is an automorphism of $\Sphere{3}$ when $s\neq 1$. Define $c_\ell \coloneqq \sqrt{\binom{2 k + 1}{\ell}}$ and note that $c_k = c_{k+1}$. Set
\begin{align*}
F^k_s(z,w) & \coloneqq (c_0 z^{2 k +1},  \ldots,c_{k-1} z^{k+2} w^{k-1}, c_k z^k w^k T_s^1(z,w), c_{k+1} z^k w^k  T_s^2(z,w), \\
 & \qquad c_{k+2} z^{k-1} w^{k+2},\ldots, c_{2k+1} w^{2k+1}).
\end{align*}
It holds for all $s$ that $F^k_s$ maps $\Sphere{3}$ into $\Sphere{2 k + 2}$, since it originates from $H^{2k+1}_2$ by applying the inverse of tensoring to the $(k+1)$-th and $(k+2)$-th component using the map $(z,w) \mapsto (z,w)$, and then tensoring the $(k+1)$-th component of the resulting map with $T_s$. 
Furthermore $F^k_0 = H^{2k+1}_2$ and since $F^k_0$ is $(2k+1)$-nondegenerate by \cref{lem:homMapNonDeg}, the same holds for each map $F^k_s$ for $s$ sufficiently close to $0$.

The remaining step is to show that for $s$ sufficiently close to $0$, when $s \neq 0$ the map $F^k_s$ is not equivalent to a polynomial sphere map (in particular $H^{2k+1}_2$). To see this apply the polynomiality criterion of Faran--Huang--Ji--Zhang \cite{FHJZ10}*{Remark 2.3 (A)}. More precisely, write $F^k_s = \frac P Q$ with $P = (P_1,\ldots, P_{2k+2})$ and $Q$ being the denominator of $T_s$, and $d \coloneqq \deg F^k_s = \max\{\deg P, \deg Q\} = 2k+1$. Consider the map
\begin{align*}
\hat F(z,w,t) & \coloneqq  t^d \left(P\left(\frac z t, \frac w t\right), Q\left(\frac z t, \frac w t\right)\right) \\
& = \Bigl(c_0 z^{2 k +1} (r_1 z + r_2 t),  \ldots,c_{k-1} z^{k+2} w^{k-1} (r_1 z + r_2 t), c_k z^k w^k t(r_2 z + r_1 t), \\
 & \qquad  2 c_{k+1} (1-s)z^k w^{k+1} t, c_{k+2} z^{k-1} w^{k+2} (r_1 z + r_2 t) ,\ldots, c_{2k+1} w^{2k+1}(r_1 z + r_2 t), \\
 & \qquad t^{2k+1} (r_1 z + r_2 t)\Bigr),
\end{align*}
where $r_1 \coloneqq s(s - 2)$ and $r_2 \coloneqq r_1+2$. Write $\hat F = (\hat F_1, \ldots, \hat F_d, \hat Q)$, then $F^k_s$ is equivalent to a polynomial map if and only if there exist $a_1,a_2, A_1, \ldots, A_d \in \C$ and $C \in \C\setminus \{0\}$ such that 
\begin{align}\label{eq:nonpoly}
\sum_{j=1}^d \hat F_j A_j + \hat Q = C (a_1 z + a_2 w + t)^d.
\end{align}
The claim is that under the assumption $s\neq 0$ the equation \eqref{eq:nonpoly} has no solution. Comparing the coefficient of $z^2 t^{d-2}$ one obtains that $a_1 = 0$. Then the coefficient of $z t^{d-1}$ gives $r_1 = 0$, which cannot be satisfied for $0\neq s < 2$. This finishes the proof.
\end{proof}

Note that for $k=1$ the vector $\frac{d}{d s}|_{s=0} F_s^1$ is a nontrivial infinitesimal deformation of $H^3_2$ from \cref{ex:listInfDefHomMap}, when the parameter $k \in \C$  used there is taken to be real.

\bibliography{ref} 

\end{document}